\setlist[enumerate]{itemsep=2pt,topsep=3pt}
\setlist[itemize]{itemsep=2pt,topsep=3pt}
\setlist[enumerate,1]{label={\upshape (\roman*)}}
\renewcommand{\leq}{\leqslant}
\renewcommand{\geq}{\geqslant}
\DeclareMathOperator*{\argmax}{arg\,max}
\newcommand{\too}{\stackrel { o } {\to} }
\newcommand{\setntn}[2]{ \{ #1 : #2 \} }
\newcommand{\fore}{\therefore \quad}
\newcommand{\1}{\mathbbm 1}
\newcommand*\diff{\mathop{}\!\mathrm{d}}
\newcommand{\aA}{\mathscr A}
\newcommand{\bB}{\mathscr B}
\newcommand{\sS}{\mathscr S}
\newcommand{\Asf}{\mathsf A}
\newcommand{\Bsf}{\mathsf B}
\newcommand{\Ssf}{\mathsf S}
\newcommand{\Gsf}{\mathsf G}
\newcommand{\Xsf}{\mathsf X}
\newcommand{\Ysf}{\mathsf Y}
\newcommand{\Zsf}{\mathsf Z}
\newcommand{\RR}{\mathbbm R}
\newcommand{\NN}{\mathbbm N}
\newcommand{\TT}{\mathbbm T}
\newcommand{\EE}{\mathbbm E}
\renewcommand{\phi}{\varphi}
\renewcommand{\epsilon}{\varepsilon}
\theoremstyle{plain}
\newtheorem{theorem}{Theorem}[section]
\newtheorem{lemma}[theorem]{Lemma}
\newtheorem{proposition}[theorem]{Proposition}
\theoremstyle{definition}
\newtheorem{example}{Example}[section]
\newtheorem{remark}{Remark}[section]
\newtheorem{assumption}{Assumption}[section]
\newcommand{\navy}[1]{\textbf{\textcolor{Brown}{#1}}}
\begin{document}

\title[Dynamic Programming]{Dynamic Programming in Ordered Vector Space}

\author{John Stachurski}
\address{National Graduate Institute for Policy Studies (GRIPS), Tokyo, Japan}
\email{john.stachurski@gmail.com}

\author{Nisha Peng}
\address{Research School of Economics, Australian National University, Canberra, Australia}
\email{chengyuanp2022@gmail.com}

\begin{abstract}
    \vspace{1em}
    New approaches to the theory of dynamic programming view dynamic
    programs as families of policy operators acting on partially ordered sets.
    In this paper, we extend these ideas 
    by shifting from arbitrary partially ordered sets to ordered vector
    spaces. The integrated algebraic and order structure in such spaces leads to sharper
    fixed point results.  These fixed point results can then be exploited to
    obtain optimality properties.  We illustrate our results
    through applications ranging from firm management to data valuation.
    These applications include features from the recent literature on dynamic programming, including
    risk-sensitive preferences, nonlinear discounting, and state-dependent
    discounting. In all cases we establish existence of optimal policies,
    characterize them in terms of Bellman optimality relationships, and prove
    convergence of major algorithms.

    \smallskip
    \noindent \textbf{Keywords.} 
    \emph{Ordered vector space, Bellman equation, dynamic programming}

    \smallskip
    \noindent \textbf{Subject classification.} 
    \emph{dynamic programming, Markov, utility}

    \smallskip
    \noindent \textbf{Area of review.} 
    \emph{Decision Analysis}
\end{abstract}

\maketitle

\section{Introduction}

Dynamic programming is a core component of modern optimization theory, with
application domains ranging from operations research to finance, economics, and
biology (see, e.g., \cite{bellman1957dynamic, bauerle2011markov, bertsekas2012dynamic,
bloise2023dont}).
In recent years, dynamic
programming has formed an important part of artificial intelligence theory and
practice, mainly through reinforcement learning \citep{bertsekas2021rollout,
kochenderfer2022algorithms, murphy2024reinforcement}.  

One of the most striking features of the current dynamic programming landscape is the huge
variety of applications.  This variety comes from two sources.  The first is the
ever-increasing range of use cases, as new problems come to 
the attention of
researchers (e.g., asteroid belt exploration, user engagement maximization on
social media platforms, etc.). The second
is that researchers are taking traditional applications (e.g., optimal savings
and investment problems) and modifying the
specification of the dynamic program to include new features, such as
risk-sensitivity, nonlinear recursive preferences, nonlinear discounting,
ambiguity aversion, desire for robustness, and the use of quantiles or
distributions instead of expectations.  (For a small selection of this 
literature, see \cite{maccheroni2006dynamic}, \cite{marinacci2010unique},
\cite{de2019dynamic}, \cite{jaskiewicz2014variable},
\cite{bauerle2021stochastic}, or  \cite{TE20250353}.)

Handling such a vast range of potentially diverse applications within a single unified
framework requires high-level theory and significant abstraction, 
stripping away specific features of particular problems in order to reduce dynamic 
programming to its essence.  This has led to the development of
``abstract dynamic programming,'' led by \cite{bertsekas2022abstract} and
further developed by numerous authors, including \cite{stachurski2021dynamic}, 
\cite{bloise2023dont}, \cite{toda2024unbounded}, and
\cite{sargent2025partially}. For example, the theory developed in
\cite{sargent2025partially} provides an abstract framework where dynamic programs
are represented as families of policy operators acting on a partially ordered
set.  By taking this very high level of abstraction, the authors are able to provide
optimality and algorithmic convergence results that apply to a vast range of
applications.

One disadvantage of the results in \cite{sargent2025partially} is that their
high-level structure makes it difficult to
test their conditions in specific applications.  In this paper, we begin to rectify this
deficiency by adding bridging results that are
 significantly easier to test. We do this by
developing and exploiting fixed point results in ordered vector space.  
For example, we use a recent result
of \cite{marinacci2019unique}, which provides a sufficient condition for
the uniqueness of fixed points of monotone concave operators. We also establish some
new fixed point results in Riesz space, which we then exploit to generate
additional
optimality and convergence results for abstract dynamic programming theory.  

The value-added provided by our results is illustrated in our applications. We
use these applications to show that both standard and exotic versions of dynamic
programming can be handled using the methods developed in this paper.  In
addition, we use the same methods to establish new optimality and algorithmic
convergence results for specific applications. These applications pertain to
\begin{enumerate}
    \item risk-sensitive preferences \citep{bauerle_markov_2022,
        bauerle_markov_2024, bauerle_stochastic_2018}, where we extend the
        risk-sensitive framework (which is also connected to robust control) to
        handle state-dependent discounting,
    \item quantile preferences \citep{giovannetti2013asset, de2019dynamic, 
        de2022static, TE20250353},
        where we investigate a Q-factor formulation of Markov decision processes
        with quantile preferences, 
        opening up the possibility of using Q-learning-based reinforcement
        learning with this class of certainty equivalents, 
        and showing that both value function iteration and
        policy iteration algorithms converge, 
        and
    \item nonlinear discounting \citep{jaskiewicz2014variable,
        bauerle2021stochastic}, where we consider a variation on the model that
        can potentially include features such as state-dependent discounting or
        negative interest rates (i.e., discount factors that are occasionally
        greater than one).
\end{enumerate}

In all cases, we characterize optimal policies
and prove convergence of major algorithms.  Throughout the discussion of
applications, we emphasize how the framework we develop
has direct implications for practical problems. For instance, in firm management,
we provide a numerical study to show how the results help shed light on firm
valuation and choices in the context of risk-sensitive
responses to stochastic cash flows.  We show that the analysis remains
straightforward even under the realistic assumption that discounting varies with the economic
environment.  This allows a firm's behavior to
be traced under different risk-aversion parameters and multiple sources of
risk. With minor modifications, the same ideas can be applied to a wide range of dynamic optimization problems, such as consumption–savings, inventory management, portfolio selection, and job search.

In addition, to illustrate one of our fixed point results, we
also discuss existence of solutions to the valuation of firm-specific digital
data discussed in \cite{veldkamp2023valuing}.

Under the hood, we use order completeness properties of certain ordered vector
spaces to establish sharp results. The key idea is as follows.
\cite{sargent2025partially} shows that strong optimality results for dynamic programs
require that policy operators have unique fixed points and the Bellman operator has at
least one fixed point.  A stumbling block is that while the policy operators
might have nice properties that allow us to obtain necessary fixed point
results, these properties might not transfer over to the Bellman operator.
(For example, the concavity properties used in the fixed point result from
\cite{marinacci2019unique} might hold for the policy operators but fail for the
Bellman operator, since the Bellman operator involves a supremum, and this
supremum operation breaks concavity.)  Here, we use the fact that
many ordered vector spaces have order completeness properties, while the Bellman 
operator naturally has order preserving properties.  These features can be
combined to obtain a fixed point for the Bellman operator.

The structure of the paper is as follows.  In Section~\ref{s:pre}, we
present some mathematical notations. In Section~\ref{s:adp}, we briefly recall the
dynamic programming framework of \cite{sargent2025partially}.  
Section~\ref{s:fprs} provides several fixed-point results in Riesz
spaces.  Our main optimality results are in Section~\ref{s:opti}.
Section~\ref{s:appl} contains applications.

\section{Preliminaries}\label{s:pre}

This introductory section reviews necessary mathematical concepts.  
The discussion of partially ordered sets and
ordered vector space largely follows \cite{zaanen2012introduction}.  
Throughout, when $A$ and $B$ are sets, the symbol $B^A$ represents the set of
maps from $A$ to $B$.

\subsection{Posets}

Let $V = (V, \preceq)$  be a partially ordered set, also called a poset. If $v
\preceq w$ are two elements in $V$, then $[v, w]$ represents the set $\{u \in V:
v \preceq u \preceq w\}$. Each such set is called an \navy{order interval} in
$V$. A subset $A$ of $V$ is called \navy{order bounded} if there exists an order
interval $I \subset V$ with $A \subset I$.  In line with this definition, the
set $V$ is order bounded if $V$ itself can be expressed as an order interval.
A \textbf{chain} is a partially ordered set such that every two
elements $a$ and $b$ satisfy $a \preceq b$ or $b \preceq a$ (or both).
A poset $V$ is called \navy{chain complete} (resp., \navy{countably chain
complete}) if $V$ is order bounded and every chain (resp., every countable chain) 
has a supremum and an infimum in $V$.  The poset $V$ is called \navy{Dedekind complete} 
(resp., \navy{countably Dedekind complete}) if every order interval is chain complete
(resp., countably chain complete).  

A sequence $(v_n)$ in $V$ is called  \navy{increasing} if $v_n \preceq v_{n+1}$ for all $n
\in \NN$.  If $(v_n)$ is increasing and $\bigvee_n v_n = v$ for some $v \in E$, then we
write $v_n \uparrow v$.  (Here and below, $\bigvee$ denotes a supremum and $\bigwedge$
denotes an infimum.)  Similarly, we write $v_n \downarrow v$ if $(v_n)$ is decreasing and
$\bigwedge_n v_n = v$ for some $v \in V$.  
A self-map $S$ on $V$ is called \textbf{order preserving} if
$v \preceq w$ implies $S v \preceq S w$.
A self-map $S$ on $V$ is called \navy{order continuous} on $V$
if $Sv_n \uparrow Sv$ whenever $v_n \uparrow v$.  In other words, 
if $(v_n) \subset V$ with $v_n \uparrow v \in V$, then supremum $\bigvee_n Sv_n$ exists in
$V$ and equals $Sv$.   It follows from the definition that every order continuous self-map
on $V$ is also order preserving.

\subsection{Ordered Vector Space}\label{ss:rs}

We recall that an ordered vector space is a vector space 
over real field $E$ paired with a
partial order $\leq$ satisfying $u \leq v$ implies $u + z \leq v + z$ for all $z
\in E$ and $r u \leq r v$ for all nonnegative $r \in \RR$. The positive cone
$E_+$ of $E$ is all $v \in E$ with $0 \leq v$. For $v, v' \in E$, the statement
$v < v'$ means that $v \leq v'$ and $v$ and $v'$ are distinct. 
A positive linear operator on $E$ is a linear map $L \colon E \to E$ with $L E_+
\subset E_+$.

A Riesz space is an ordered vector space $E = (E, \leq)$ that is also a lattice
(i.e., closed under finite suprema and infima).   The absolute value of $u \in
E$ is defined by $|u| \coloneq (u^+) \vee (u^-)$ where $u^+ \coloneq u \vee 0$
and $u^- \coloneq (-u) \vee 0$.  Regarding absolute value, we will use the
following simple facts:
\begin{itemize}
    \item For any $v \in E$ and $u \in E_+$, we have $|v| \leq u \iff v \in [-u, u]$.
    \item If $K \colon E \to E$ is a positive linear operator, then $|Ku| \leq K |u|$ for
    all $u \in E$.
\end{itemize}

A (not necessarily monotone) sequence $(v_n)$ in a Riesz space $E$ is said to
\navy{order converge} to a point $v \in E$ if there exists a sequence $d_n
\downarrow 0$ in $E$ with $|v_n - v| \leq d_n$ for all $n \in \NN$.  In
this case we write $v_n \too v$. Order limits are unique.   Moreover, if $(v_n)$ is
increasing, then $v_n
\too v$ if and only if $v_n \uparrow v$.  An analogous result holds for
decreasing sequences.  (See, e.g., \cite{zaanen2012introduction}.)

When $(\Xsf, \aA, \mu)$ is a $\sigma$-finite measure space, we let 
\begin{itemize}
    \item $\RR^{\Xsf}$ be the real-valued functions on $\Xsf$,
    \item $m\Xsf$ be the real-valued Borel measurable functions on $(\Xsf,
        \aA)$, and
    \item $bm\Xsf$ be the bounded functions in $m\Xsf$. 
\end{itemize}
The spaces $\RR^{\Xsf}$, $m\Xsf$, and $bm\Xsf$ are all countably
Dedekind complete Riesz spaces under the
pointwise partial order $\leq$.  The space $\RR^{\Xsf}$ is also Dedekind complete.

\subsection{Markov Dynamics}

If $(\Xsf, \bB)$ is a measurable space and $U$ is a topological space, a
\navy{stochastic kernel} $P$ from $U$ to $\Xsf$ is a function $P \colon U \times
\bB \to \RR_+$ with the property that $u \mapsto P(u, B)$ is Borel measurable
for each $B \in \bB$ and $B \mapsto P(u, B)$ is a probability on $(\Xsf, \bB)$
for all $u \in U$.  When $U = \Xsf$, we say that $P$ is a stochastic kernel on
$\Xsf$.  Given a stochastic kernel $P$ from $U$ to $\Xsf$, we set 
\begin{equation*}
    (P h)(u) \coloneq \int h(x') P(u, \diff{x'})
    \qquad (h \in bm\Xsf, \; u \in U).
\end{equation*}

\section{Abstract Dynamic Programs}\label{s:adp}

The purpose of this section is to 
briefly review the dynamic programming framework from
\cite{sargent2025partially} 
in order to make the paper partly self-contained.

\subsection{Definition and Properties}\label{ss:adp}

An \navy{abstract dynamic program} (ADP) is a pair $(V, \TT)$, where 
\begin{enumerate}
    \item $V = (V, \preceq)$ is a poset and
    \item $\TT = \setntn{T_\sigma}{\sigma \in \Sigma}$ is a nonempty family of
        order preserving self-maps on $V$.
\end{enumerate}
The set $V$ is called the \navy{value space}.  Each $T_\sigma \in \TT$ is called a 
\navy{policy operator}.  $\Sigma$ is an index set and each $\sigma \in \Sigma$ is called
a \navy{policy}.  In applications, we impose conditions under which each $T_\sigma$
has a unique fixed point.  In these settings, the significance of each policy
operator $T_\sigma$ is that its fixed point, denoted below by $v_\sigma$, represents the
lifetime value of following policy $\sigma$.  

\begin{example}
    Consider a Markov decision process with finite state space $\Xsf$ and finite
    action space $\Asf$ (see \cite{puterman2005markov}).
    The set of feasible actions
    at state $x$ is denoted by $\Gamma(x)$, where $\Gamma$ is a nonempty
    correspondence from $\Xsf$ to $\Asf$.
    The set $\Gsf \coloneq \setntn{(x, a) \in \Xsf \times \Asf}{a \in
    \Gamma(x)}$ denotes the feasible state-action pairs and $r$ is a reward
    function mapping $\Gsf$ to $\RR$. Letting $\beta \in (0,1)$ be a discount factor
    and $P \colon \Gsf \times \Xsf \to [0,1]$ provide transition probabilities,
    the Bellman equation  is 
    \begin{equation}\label{eq:mdp_bell}
        v(x) = \max_{a \in \Gamma(x)} 
        \left\{
            r(x, a) + \beta \sum_{x'} v(x') P(x, a, x')
        \right\}
        \qquad\qquad (x \in \Xsf).
    \end{equation}
    The set of feasible policies is the finite set $\Sigma \coloneq \setntn{\sigma
    \in \Asf^\Xsf} {\sigma(x) \in \Gamma(x) \text{ for all } x \in \Xsf}$. We
    combine $\RR^\Xsf$ (the set of all real-valued functions on $\Xsf$) with the
    pointwise partial order $\leq$ and, for $\sigma \in \Sigma$ and $v \in
    \RR^\Xsf$, define the MDP policy operator
    \begin{equation}\label{eq:tsig_mdp}
        (T_\sigma \, v)(x) 
            = r(x, \sigma(x)) + \beta \sum_{x'} v(x') P(x, \sigma(x), x')
            \qquad\qquad (x \in \Xsf).
    \end{equation}
    With $\TT \coloneq \setntn{T_\sigma}{\sigma \in \Sigma}$, the pair $(V,
    \TT)$ is an ADP.  If $v_\sigma$ is the (unique) fixed point of $T_\sigma$,
    then $v_\sigma(x)$ represents the lifetime value of using policy
    $\sigma$ in every period, conditional on starting in state $x$ (see, e.g.,
    \cite{sargent2025partially}, \S3.1, or \cite{puterman2005markov}).
\end{example}

Let $(V, \TT)$ be an ADP with policy set $\Sigma$. Given $v \in V$, a policy
$\sigma \in \Sigma$  is called \navy{$v$-greedy} if $T_\tau \, v \preceq T_\sigma \, v$
for all $\tau \in \Sigma$. 
An element $v \in V$ is said to satisfy the \navy{Bellman equation} if 
\begin{equation}\label{eq:adp_belleq}
    v \coloneq \bigvee_{\sigma} T_\sigma \, v
    \qquad (v \in V).
\end{equation}
In \eqref{eq:adp_belleq}, the supremum is taken over all $\sigma \in \Sigma$.
The \navy{Bellman operator} generated by $(V, \TT)$  is defined via
\begin{equation*}
    Tv \coloneq \bigvee_{\sigma} T_\sigma \, v
    \quad \text{whenever the supremum exists}.    
\end{equation*}
Evidently $v \in V$ satisfies the Bellman equation if and only if $Tv$ exists and $Tv = v$.
Also, $T_\sigma \, v \preceq Tv$ for all $\sigma \in \Sigma$ and
$T_\sigma \, v = T v$ if and only if $\sigma \in \Sigma$  is $v$-greedy.

We call $(V, \TT)$ 
\begin{itemize}
    \item \navy{well-posed} if each $T_\sigma \in \TT$ has a unique fixed point in $V$,
        and
    \item \navy{regular} if a $v$-greedy policy exists for all $v \in V$.
\end{itemize}
Well-posedness is a minimal condition because without it we cannot be sure that
policies have well-defined lifetime values. Maximizing lifetime values
(or, equivalently, minimizing lifetime costs) is the objective of all dynamic
programming problems. Regularity is useful for the existence of optimal policies
(defined below).

\subsection{Optimality}

When $(V, \TT)$ is well-posed, we let $V_\Sigma$ denote all
elements of $V$ that are fixed points for some $T_\sigma \in \TT$.
We say that a policy $\sigma \in \Sigma$ is \navy{optimal} for
$(V, \TT)$ if $v_\sigma$ is a greatest element of $V_\Sigma$.
In other words, $\sigma$ is optimal if it attains the ``highest possible
lifetime value.''

Let $(V, \TT)$ be a well-posed ADP and set
\begin{equation*}
    v^* \coloneq \bigvee_\sigma v_\sigma \coloneq \bigvee V_\Sigma
    \quad \text{whenever the supremum exists}.
\end{equation*}
When $v^*$ exists (i.e., when the supremum exists) we call $v^*$ the \navy{value function}
of the ADP. \endnote{Admittedly, $v^*$ is not necessarily a function in our abstract
framework. The terminology is chosen to align objects with
their counterparts in traditional dynamic programming theory.}
The following statements are obvious from the definitions:
\begin{itemize}
    \item Existence of an optimal policy $\sigma$ implies that $v^*$ exists and is equal to
    $v_\sigma$.
    \item If $v^* = v_\sigma$ for some $\sigma \in \Sigma$, then $\sigma$ is
        optimal.
\end{itemize}
\navy{Bellman's principle of optimality} is said to hold if $(V, \TT)$ is well-posed and
\begin{equation*}
  \setntn{\sigma \in \Sigma}{\text{$\sigma$ is optimal }}
  = 
  \setntn{\sigma \in \Sigma}{\text{$\sigma$ is $v^*$-greedy }}.
\end{equation*}
(When $v^*$ does not exist, both sets are understood as empty.)
For a well-posed and regular ADP $(V, \TT)$, the \navy{fundamental optimality
properties} are said to hold when the following statements are true:
\begin{enumerate}\label{enum:b13}
    \item[(B1)] at least one optimal policy exists,
    \item[(B2)] $v^*$ is the unique solution to the Bellman equation in $V$, and
    \item[(B3)] Bellman's principle of optimality holds. 
\end{enumerate}

\subsection{Algorithms}

For a regular, well-posed ADP $(V, \TT)$, the \navy{Howard policy operator}
corresponding to $(V, \TT)$ is defined via
\begin{equation*}
    H \colon V \to V_\Sigma, \qquad
    H v = v_\sigma \quad \text{ where $\sigma$ is $v$-greedy}.
\end{equation*}
For each $m \in \NN$, the \navy{optimistic policy operator} is defined by 
\begin{equation*}
    W_m \colon V \to V, \qquad
    W_m v 
    = T^m_\sigma v \quad \text{where $\sigma$ is $v$-greedy}.
\end{equation*}
(To make these operators well-defined, we always select the same
$v$-greedy policy when applying each to any given $v$.)

Let $(V, \TT)$ be a regular ADP and suppose that the fundamental optimality
properties hold. Let $V_U$ be all $v \in V$ with $v \preceq Tv$.  Let $v^*$ denote
the value function. In this setting, it is said that 
\begin{itemize}
    \item \navy{value function iteration (VFI) converges} if $T^n v \uparrow
    v^*$ for all $v \in V_U$,
    \item \navy{optimistic policy iteration (OPI) converges} if $W_m^n v \uparrow
    v^*$ for all $v \in V_U$, and
    \item \navy{Howard policy iteration (HPI) converges} if $H^n v \uparrow
    v^*$ for all $v \in V_U$. 
\end{itemize}
Convergence of OPI implies convergence of VFI, since OPI reduces to VFI
when $m=1$.

\section{Fixed Point Results}\label{s:fprs}

The optimality and algorithmic results in \cite{sargent2025partially} rely on
well-posedness.  However, well-posedness is not trivial to establish.
Aiming for more directly applicable results, this section provides
several fixed-point results in ordered vector space that can be used to
establish well-posedness.    One result is drawn from \cite{marinacci2019unique}, 
and the other two are contraction-related arguments.  
Throughout this section, \emph{$E$ is a countably Dedekind complete Riesz
space}.

\subsection{Concavity and Fixed Points}

We will make use of a fixed point theorem due to \cite{marinacci2019unique}. To
begin, we fix $b \in E$ with $0 < b$. The \navy{lower perimeter} of the order
interval $V_b \coloneq [0, b]$ is defined as 
\begin{equation*}
    \partial V_b \coloneq 
    \setntn{v \in V_b}
    {\alpha \in \RR_+ \text{ and } \alpha b \leq v \implies \alpha = 0}.
\end{equation*}
(This follows the definition in \cite{marinacci2019unique} but is specialized to
the case where the set has the form $[0,b]$.) 

\begin{example}
    Let $E = bm\Xsf$, the space of bounded real-valued Borel measurable
    functions on $(\Xsf, \aA)$ with the pointwise partial order $\leq$.  Let $b$
    be any function in $E$ with $\inf_{x \in \Xsf} b(x) > 0$ and let $V_b = [0,
    b]$.  In this case, $\partial V_b$ is all $f \in V_b$ with $\inf_{x \in \Xsf}
    f(x) = 0$ (see Proposition~4 of \cite{marinacci2019unique}).
\end{example}

\begin{theorem}[Marinacci and Montrucchio]\label{t:mm}
    Let $S$ be an order continuous self-map on $V_b$.
    If $S$ is concave and $Sv \neq v$ whenever $v \in \partial V_b$, then $S$ has 
    exactly one fixed point
    $\bar v$ in $V_b$ and $S^n v \uparrow \bar v$  for any $v \in V_b$  with $v \leq S v$.
\end{theorem}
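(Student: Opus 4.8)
The plan is to split the argument into an existence-and-convergence part, handled by monotone iteration, and a uniqueness part, handled by a scaling argument built around the perimeter condition. For existence, note that $S0 \in V_b$ forces $0 \leq S0$, so order preservation makes $(S^n 0)_{n\geq 0}$ an increasing sequence lying in the order interval $V_b = [0,b]$. Since $E$ is countably Dedekind complete, $V_b$ is countably chain complete, so this sequence has a supremum $\bar v \in V_b$, i.e.\ $S^n 0 \uparrow \bar v$; order continuity of $S$ then gives $S\bar v = \bigvee_n S^{n+1}0 = \bar v$, so $\bar v$ is a fixed point. The same reasoning shows that for any $v \in V_b$ with $v \leq Sv$ the sequence $(S^n v)$ increases, with supremum $w_v := \bigvee_n S^n v$ a fixed point. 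Finally, because $0 \leq w$ for every $w \in V_b$, we have $S^n 0 \leq S^n w = w$ for any fixed point $w$, so $\bar v \leq w$; thus $\bar v$ is the \emph{least} fixed point, and it remains only to show it is the \emph{only} one (which then also yields $w_v = \bar v$, i.e.\ $S^n v \uparrow \bar v$).

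For uniqueness I would first extract the operative consequence of the perimeter hypothesis: since $Sv \neq v$ on $\partial V_b$, no fixed point lies in $\partial V_b$, so unfolding the definition, every fixed point $w$ satisfies $\gamma_w b \leq w$ for some $\gamma_w > 0$; in particular $w \neq 0$. Now fix a fixed point $w$; since $\bar v \leq w$ is known, it suffices to prove $w \leq \bar v$. Consider $\Lambda := \setntn{t \in \RR_+}{t\,w \leq \bar v}$. Because $\bar v \notin \partial V_b$ we have $\gamma_{\bar v} b \leq \bar v$, hence $\gamma_{\bar v} w \leq \gamma_{\bar v} b \leq \bar v$, so $\Lambda$ contains a strictly positive number; because $w \neq 0$ and $E$ is Archimedean (being countably Dedekind complete), $\Lambda$ is bounded above; and $\Lambda$ is down-closed, with $t^\ast := \sup \Lambda$ satisfying $t^\ast w \leq \bar v$ via an Archimedean limiting argument along $t_n \uparrow t^\ast$. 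If $t^\ast \geq 1$ then $w \leq t^\ast w \leq \bar v$ and we are done. If $t^\ast < 1$, I invoke concavity: writing $t^\ast w = t^\ast w + (1-t^\ast)\cdot 0$ gives $\bar v = S\bar v \geq S(t^\ast w) \geq t^\ast S w + (1-t^\ast) S0 = t^\ast w + (1-t^\ast) S0$, and iterating (each step recombining $w$ with $S^k 0$ through concavity and using $Sw=w$) yields $\bar v \geq t^\ast w + (1-t^\ast) S^n 0$ for all $n$; passing to the supremum over $n$ gives $\bar v \geq t^\ast w + (1-t^\ast)\bar v$, i.e.\ $t^\ast \bar v \geq t^\ast w$, i.e.\ $\bar v \geq w$. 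Either way $w = \bar v$, so the fixed point is unique; combined with the first paragraph this gives $S^n v \uparrow \bar v$ for every $v \in V_b$ with $v \leq Sv$.

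The main obstacle is this uniqueness step — concretely, spotting the right scalar to track, $t^\ast = \sup\setntn{t}{t\,w \leq \bar v}$, and recognizing that the perimeter condition is precisely what makes it run: it forces $\bar v \notin \partial V_b$, so $\bar v$ dominates a positive multiple of $b$ and hence of $w$, which both keeps $\Lambda$ nondegenerate and, together with concavity and $\bar v \leq w$, collapses the iterated inequality to $t^\ast \bar v \geq t^\ast w$. Along the way one must check the routine technical points: countable Dedekind completeness supplies both the suprema needed for the monotone iteration and the Archimedean property used to take limits in the scalar $t$; the one-sided definition of order continuity (along increasing sequences) is all that is used; and $V_b$ is convex, so $S$ may legitimately be applied to the convex combinations appearing in the concavity inequalities.
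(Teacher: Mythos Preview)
Your argument is correct and complete. The existence/convergence half matches the paper's (both invoke the Tarski--Kantorovich theorem on the increasing orbit $(S^n 0)$), but your uniqueness proof takes a genuinely different route. The paper argues by contradiction: given distinct fixed points $\bar v < \bar w$, it perturbs $\bar v$ downward to $v_m = \bar v - \tfrac{1}{m}(\bar w - \bar v)$, uses the perimeter condition to keep $v_m$ in $V_b$ for large $m$, writes $\bar v = \tfrac{m}{m+1}v_m + \tfrac{1}{m+1}\bar w$, and applies concavity \emph{once} to deduce $Sv_m \leq v_m$; then $[0,v_m]$ is $S$-invariant, so Tarski--Kantorovich produces a fixed point there, contradicting the minimality of $\bar v$. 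You instead track the supremal scalar $t^\ast$ with $t^\ast w \leq \bar v$ and bootstrap through \emph{iterated} concavity, upgrading $\bar v \geq t^\ast w + (1-t^\ast)S^n 0$ to $\bar v \geq w$ in the limit. The paper's perturbation trick is shorter---one application of concavity and a second appeal to the fixed-point theorem---while your ratio argument is closer in spirit to classical proofs for monotone concave operators, makes explicit how concavity improves the comparison at each step, and avoids the second invocation of Tarski--Kantorovich.
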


\begin{proof}
    This follows directly from  Theorem~1 of \cite{marinacci2019unique}.
\end{proof}

The existence component of Theorem~\ref{t:mm} is an immediate consequence of the 
Tarski--Kantorovich fixed-point theorem 
(see \cite{kantorovitch1939method}),
which says that an order-continuous self-map $S$ on a countably chain complete
poset $V$ has a least fixed point,  and also that $\bigvee_n S^n v$ belongs to the set of
fixed points for any $v \in V$ with $v \preceq S v$.
To see uniqueness, let $\bar v$ be the least element of the set of fixed
points, so that $\bar v \leq w$ for any fixed point $w$ of $S$.  Suppose that there is another fixed
point $\bar w$ with $\bar v < \bar w$. Let $v_n \coloneq \bar v - \frac{1}{n} 
(\bar w - \bar v$). Clearly, $v_n \uparrow \bar v$. Since $\bar v \notin \partial V_b$,
there exists some positive $\alpha \in \RR$ such that $\alpha b - \frac{2}{n} b \leq v_n$. 
Choosing some large enough $m \in \NN$, we have $0 < v_m$.  Note that
\begin{equation*}
    \bar v = \frac{m}{m+1} v_m + \frac{1}{m+1} \bar w.
\end{equation*}
By concavity of $S$,
\begin{equation*}
    \frac{m}{m+1} S v_m + \frac{1}{m+1} S \bar w
    \leq \ S \bar v
    = \bar v
    = \frac{m}{m+1} v_m + \frac{1}{m+1} \bar w.
\end{equation*}
Hence, $S v_m \leq v_m$ and so $S$ is a self-map on $[0, v_m]$. Applying the 
Tarski-Kantorovich fixed point theorem yields $\bar v \in [0, v_m]$. 
So we have $\bar w \leq \bar v$, which is a contradiction to $\bar v < \bar w$.

\subsection{Asymptotic Contractions and Fixed Points}

Another way to obtain unique fixed points is via contractivity conditions.
To discuss this issue, we begin with two definitions. Let $S$ be a self-map on
$V \subset E$.  We call $S$
\begin{itemize}
    \item \navy{asymptotically order contracting} on $V$ if $|S^n \, v - S^n w| \too 0$
    for any $v, w \in V$, 
        and
    \item \navy{absolutely order contracting} on $V$ if there exists an order
        continuous linear operator $K \colon E \to E$ such that, for all $v, w \in V$,
        \begin{equation}\label{eq:oc}
            |S \,v - S \, w| \leq K |v - w|
            \quad \text{and} \quad K^n | v - w | \too 0.
        \end{equation}
\end{itemize}

The asymptotic order contraction property is analogous to asymptotic
(or eventual) contraction in metric spaces, which is a weaker condition than
the contraction property. It is widely used in macroeconomics (see, e.g., \cite{kamihigashi2012order,
kamihigashi2014stochastic, stachurski2021dynamic, toda2024unbounded}).
The lemma below notes a useful connection between absolute order
contractions and asymptotic order contractions.

\begin{lemma}\label{l:ocfl}
    Let $S$ be an order preserving self-map on a subset $V$ of $E$.  
    If $S$ is absolutely order contracting, then $S$ is both order continuous
    and asymptotically order contracting.
\end{lemma}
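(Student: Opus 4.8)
The plan is to verify the two conclusions separately, first showing asymptotic order contraction and then order continuity, in each case leveraging the defining inequality $|Sv - Sw| \leq K|v-w|$ together with the hypothesis $K^n|v-w| \too 0$ and the order continuity and linearity of $K$.

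For the asymptotic order contraction property, I would argue by induction that $|S^n v - S^n w| \leq K^n |v - w|$ for all $n$ and all $v, w \in V$. The base case $n=1$ is the first inequality in \eqref{eq:oc}. For the inductive step, I would apply the defining inequality to the pair $S^{n-1}v, S^{n-1}w \in V$ (using that $S$ is a self-map on $V$) to get $|S^n v - S^n w| \leq K|S^{n-1}v - S^{n-1}w|$, then apply the inductive hypothesis $|S^{n-1}v - S^{n-1}w| \leq K^{n-1}|v-w|$ and the positivity of $K$ (an order continuous linear operator is in particular positive, so it preserves the order relation between $|S^{n-1}v - S^{n-1}w|$ and $K^{n-1}|v-w|$). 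This gives $|S^n v - S^n w| \leq K^n |v-w|$. Since $K^n|v-w| \too 0$ by hypothesis, the squeeze property of order convergence (there exists $d_n \downarrow 0$ dominating $K^n|v-w|$, hence also dominating $|S^n v - S^n w|$) yields $|S^n v - S^n w| \too 0$, which is exactly asymptotic order contraction.

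For order continuity, suppose $v_n \uparrow v$ in $V$; I need $S v_n \uparrow S v$. Since $S$ is order preserving, $(S v_n)$ is increasing and bounded above by $S v$, so it suffices to show no smaller upper bound exists, or equivalently that $S v_n \too S v$ (for an increasing sequence, order convergence to $v$ is equivalent to $v_n \uparrow v$, as recalled in the excerpt). Now $v_n \uparrow v$ means $v - v_n \downarrow 0$, i.e. there is $e_n \downarrow 0$ with $|v - v_n| = v - v_n \leq e_n$; in fact $v - v_n$ itself decreases to $0$. Applying the defining inequality, $|S v - S v_n| \leq K|v - v_n| = K(v - v_n)$. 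Because $K$ is order continuous and $v - v_n \downarrow 0$, we have $K(v - v_n) \downarrow 0$ (order continuity of $K$ applied to the decreasing sequence $v - v_n$; one passes to the increasing sequence $v_n - v_1 \uparrow v - v_1$ and uses $K(v_n - v_1) \uparrow K(v - v_1)$, hence $K(v-v_n) = K(v - v_1) - K(v_n - v_1) \downarrow 0$). So $|S v - S v_n| \leq d_n$ for a sequence $d_n \coloneq K(v - v_n) \downarrow 0$, giving $S v_n \too S v$ and therefore $S v_n \uparrow S v$.

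**The main obstacle** I anticipate is purely bookkeeping around order continuity of the linear operator $K$: I must convert "$K$ is order continuous" (stated in the excerpt for increasing sequences, via $Kv_n \uparrow Kv$ when $v_n \uparrow v$) into the statement that $K$ sends decreasing-to-zero sequences to decreasing-to-zero sequences. This is the standard trick of negating and shifting — replacing $(w_n)$ decreasing to $0$ by $(w_1 - w_n)$ increasing to $w_1$ — together with linearity, and it is routine but needs to be spelled out since the excerpt only defines order continuity for the increasing case. Everything else reduces to the elementary facts about $|\cdot|$, positivity of $K$, and the squeeze characterization of order convergence that are already collected in Section~\ref{ss:rs}.
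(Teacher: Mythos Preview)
Your proposal is correct and follows essentially the same approach as the paper's proof: both arguments use the defining inequality $|Sv - Sw| \leq K|v-w|$ together with positivity/order continuity of $K$, iterating to get $|S^n v - S^n w| \leq K^n|v-w|$ for asymptotic contraction and squeezing $Sv - Sv_n$ between $0$ and $K(v - v_n) \downarrow 0$ for order continuity. The only differences are cosmetic---you treat the two conclusions in the opposite order and spell out more carefully (via the shift $w_1 - w_n$) why order continuity of $K$ for increasing sequences yields $K(v - v_n) \downarrow 0$, a step the paper simply asserts.
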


\begin{proof}
    Let $E$, $V$ and $S$ be as stated. Regarding order continuity, fix
    $(v_n) \subset V$ with $v_n \uparrow v \in V$. We claim that $S \, v_n
    \uparrow S \, v$. We have $0 \leq S \, v - S \, v_n \leq K |v - v_n| = K (v
    - v_n) \downarrow 0$, where $\downarrow 0$ is by order continuity
    of $K$.  Hence $S \, v - S \, v_n \downarrow
    0$ and, therefore, $S \, v_n \uparrow S \, v$.

    Regarding the asymptotic order contraction property,
    note that the operator $K$ in \eqref{eq:oc} is order continuous and hence
    order preserving.  As a result, for any $v, w \in V$, we can iterate on the
    inequality in \eqref{eq:oc} to obtain $|S^n v - S^n w| \leq K^n |v - w|$ for
    all $n \in\NN$. By
    assumption, $K^n |v - w| \too 0$.  This proves the claim.
\end{proof}

Here is a preliminary fixed point result for an operator
    acting on a countably chain complete set.  In the applications below, we construct countably
chain complete sets by using order intervals. (Note that, in Riesz spaces, every
chain complete and order convex subset (A subset $A$ is order convex if $a \leq
c \leq b$ and $a, b \in A$ implies $c \in A$) is an order interval
\citep{marinacci2019unique}.)

\begin{lemma}\label{l:rieszc}
    Let $V$ be a countably chain complete subset of $E$ and let $S$ be an order
    continuous self-map on $V$.  If $S$ is asymptotically order contracting on
    $V$, then $S$ has a unique fixed point $\bar v \in V$ and $S^n v \too \bar
    v$ for any $v \in V$.
\end{lemma}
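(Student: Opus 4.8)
The plan is to get existence from the Tarski--Kantorovich theorem quoted above, and then to read off uniqueness and convergence directly from the definition of asymptotic order contraction.

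First I would handle \textbf{existence}. As a countably chain complete poset, $V$ is in particular order bounded, so it has a least element $\bot$, and every increasing sequence in $V$ (being a countable chain) has a supremum in $V$. Since $S$ is order continuous, hence order preserving, the Tarski--Kantorovich fixed-point theorem applies to the pair $(V, S)$ and produces a fixed point $\bar v \in V$; concretely $\bar v = \bigvee_n S^n \bot$ works, since $\bot \preceq S\bot$. This is the only place where order continuity and chain completeness of $V$ are used.

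Next, \textbf{uniqueness}. Suppose $\bar w \in V$ is another fixed point. Then $S^n \bar v = \bar v$ and $S^n \bar w = \bar w$ for all $n$, so $(S^n \bar v - S^n \bar w)_n$ is the constant sequence with value $\bar v - \bar w$, which order converges to $\bar v - \bar w$. But the asymptotic order contraction property applied to $\bar v$ and $\bar w$ gives $|S^n \bar v - S^n \bar w| \too 0$, i.e. $(S^n \bar v - S^n \bar w)_n \too 0$. Order limits are unique, so $\bar v - \bar w = 0$. (Equivalently, there is $d_n \downarrow 0$ with $|\bar v - \bar w| = |S^n \bar v - S^n \bar w| \leq d_n$ for every $n$, whence $0 \leq |\bar v - \bar w| \leq \bigwedge_n d_n = 0$.) Finally, \textbf{convergence}: for any $v \in V$, applying the asymptotic order contraction property to $v$ and $\bar v$ and using $S^n \bar v = \bar v$ yields $|S^n v - \bar v| = |S^n v - S^n \bar v| \too 0$, which is precisely $S^n v \too \bar v$.

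The only step that calls for care is the invocation of Tarski--Kantorovich: one must confirm that a countably chain complete subset of $E$, equipped with the induced order, genuinely supplies what the theorem requires (order boundedness, hence a least element, plus suprema of increasing sequences), and that it is order continuity of $S$ that upgrades $\bigvee_n S^n \bot$ from an upper bound of the orbit to an actual fixed point. Everything else is a one-line consequence of the definition of asymptotic order contraction and the uniqueness of order limits; in particular, order continuity of $S$ plays no role in the uniqueness or convergence arguments.
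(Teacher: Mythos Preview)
Your proof is correct and follows essentially the same approach as the paper: existence via Tarski--Kantorovich (using order continuity of $S$ on the countably chain complete set $V$), then convergence and uniqueness from the asymptotic order contraction property together with uniqueness of order limits. The only cosmetic difference is that the paper argues convergence first and reads off uniqueness as a corollary, whereas you treat uniqueness separately before convergence; your additional detail on why Tarski--Kantorovich applies is also accurate.
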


\begin{proof}
    Let $V$ and $S$ be as stated.  Since $S$ is an order continuous self-map on the 
    countably chain complete set $V$, the Tarski-Kantorovich fixed-point theorem
    implies that $S$ has at least one fixed point $\bar v \in V$.    
    Regarding convergence,
    $|S^n v - S^n \bar v| \too 0$ implies $S^n v \too \bar v$ for all $v \in V$.  
    Since order limits are unique (see, e.g., \cite{zaanen2012introduction}), this
    yields uniqueness of $\bar v$.
\end{proof}

\begin{remark}
    Let $V$ be chain complete and let $S$ be order preserving.
    In this case, the claim in Lemma~\ref{l:rieszc} still holds if we remove
    the order continuity condition.  This follows from 
    replacing the Tarski--Kantorovich fixed point theorem with the Knaster-Tarski
    fixed-point theorem, which tells us that every order-preserving self-map on
    a chain complete poset has a fixed point (see, e.g., \cite{davey2002introduction}).
\end{remark}

As a consequence of Lemma~\ref{l:rieszc}, absolutely order contracting maps obey
the following fixed-point result.

\begin{theorem}\label{t:rieszcon}
    Let $V$ be a countably chain complete subset of $E$ and let $S$ be
    an order preserving self-map on $V$. If $S$ is absolutely order 
    contracting on $V$, then $S$ has a unique fixed point $\bar v \in V$ and
    $S^n v \too \bar v$  for any $v \in V$. 
\end{theorem}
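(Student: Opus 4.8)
The plan is to deduce Theorem~\ref{t:rieszcon} directly from the two lemmas that precede it, so the argument is short. The hypotheses are: $V$ is a countably chain complete subset of the countably Dedekind complete Riesz space $E$, $S$ is an order preserving self-map on $V$, and $S$ is absolutely order contracting on $V$, i.e.\ there is an order continuous linear operator $K \colon E \to E$ with $|Sv - Sw| \leq K|v-w|$ and $K^n|v-w| \too 0$ for all $v, w \in V$. The goal is a unique fixed point $\bar v \in V$ together with $S^n v \too \bar v$ for every $v \in V$.

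The first step is to apply Lemma~\ref{l:ocfl}: since $S$ is order preserving and absolutely order contracting on $V$, that lemma tells us $S$ is both order continuous on $V$ and asymptotically order contracting on $V$. The second step is then to invoke Lemma~\ref{l:rieszc} with this $S$: $V$ is countably chain complete, $S$ is an order continuous self-map on $V$, and $S$ is asymptotically order contracting on $V$, so Lemma~\ref{l:rieszc} yields a unique fixed point $\bar v \in V$ with $S^n v \too \bar v$ for all $v \in V$. That completes the proof.

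There is essentially no obstacle here — the work has all been front-loaded into Lemmas~\ref{l:ocfl} and~\ref{l:rieszc}, and the only thing to check is that the hypotheses line up, which they do verbatim. If one wanted the proof to be self-contained one could instead repeat the short chain: order continuity of $K$ gives $0 \leq Sv - Sv_n \leq K(v - v_n) \downarrow 0$ whenever $v_n \uparrow v$ (so $S$ is order continuous), hence by Tarski--Kantorovich $S$ has a fixed point $\bar v \in V$; then iterating $|Sv - Sw| \leq K|v-w|$ gives $|S^n v - S^n \bar v| \leq K^n |v - \bar v| \too 0$, so $S^n v \too \bar v$, and uniqueness follows from uniqueness of order limits. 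But citing the two lemmas is cleaner, and that is the route I would take.
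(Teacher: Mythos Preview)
Your proof is correct and follows exactly the paper's approach: the paper's own proof simply states that the theorem ``follows directly from Lemma~\ref{l:ocfl} and Lemma~\ref{l:rieszc},'' which is precisely the two-step reduction you carry out.
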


\begin{proof}
    Theorem~\ref{t:rieszcon} follows directly from Lemma~\ref{l:ocfl} and
    Lemma~\ref{l:rieszc}.
\end{proof}

\subsection{Affine Maps}

The next theorem is derived from Theorem~\ref{t:rieszcon} and applies to affine maps.

\begin{theorem}\label{t:nceccon}
    Let $S$ be a self-map on $E$.  If 
    \begin{enumerate}
        \item there exists a $h \in E$ and an order continuous linear operator $K$ on
            $E$ such that $Sv = h + Kv$ for all $v \in E$, and
        \item there is an $e \in E$ and $\rho \in [0, 1)$ with $|h| \leq e$ and
            $Ke \leq \rho e$,
    \end{enumerate}
    then $S$ has a unique fixed point $\bar v$ in the order interval $V \coloneq
    [-e/(1-\rho), e/(1-\rho)]$ and, moreover,
    \begin{equation*}
        S^n v \too \bar v 
        \quad \text{ for any }
        v \in V. 
    \end{equation*}

\end{theorem}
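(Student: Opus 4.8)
The plan is to apply Theorem~\ref{t:rieszcon} with the given order interval $V = [-e/(1-\rho), e/(1-\rho)]$, so the work splits into three tasks: (a) verify $V$ is a countably chain complete subset of $E$; (b) verify $S$ maps $V$ into itself and is order preserving on $V$; and (c) verify $S$ is absolutely order contracting on $V$. For (a), since $E$ is countably Dedekind complete and $V$ is an order interval, countable chain completeness is immediate from the definitions in Section~\ref{s:pre}.

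For (b), order preservation is clear: if $v \leq w$ then $Kv \leq Kw$ (since $K$, being order continuous, is positive hence order preserving), so $Sv = h + Kv \leq h + Kw = Sw$. For the self-map property, let $c \coloneq e/(1-\rho)$ and take $v \in [-c, c]$. I would show $|Sv| \leq c$. Using $|Sv| = |h + Kv| \leq |h| + |Kv| \leq e + K|v| \leq e + Kc$, where the middle step uses $|Kv| \leq K|v|$ (the second bulleted fact on positive linear operators) and the last uses $|v| \leq c$ together with monotonicity of $K$. Now $Kc = Ke/(1-\rho) \leq \rho e/(1-\rho)$, so $|Sv| \leq e + \rho e/(1-\rho) = e/(1-\rho) = c$, i.e. $Sv \in [-c, c] = V$.

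For (c), the candidate operator witnessing absolute order contraction is $K$ itself: for $v, w \in V$ we have $|Sv - Sw| = |Kv - Kw| = |K(v-w)| \leq K|v-w|$, again by the positive-operator fact. It remains to check $K^n|v - w| \too 0$. Here I would use the hypothesis $Ke \leq \rho e$: iterating and using monotonicity of $K$ gives $K^n e \leq \rho^n e$, hence $0 \leq K^n e \leq \rho^n e \downarrow 0$ (since $\rho \in [0,1)$ and scalar multiples behave well under the order), so $K^n e \too 0$. Then for $v, w \in V$, $|v - w| \leq 2c = 2e/(1-\rho)$, so $0 \leq K^n|v-w| \leq \tfrac{2}{1-\rho} K^n e \too 0$, which gives $K^n|v-w| \too 0$ by the squeeze (dominating sequence $d_n = \tfrac{2}{1-\rho}\rho^n e \downarrow 0$). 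With (a)–(c) in hand, Theorem~\ref{t:rieszcon} delivers the unique fixed point $\bar v \in V$ and $S^n v \too \bar v$ for every $v \in V$.

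The only mildly delicate point — the "main obstacle," though it is minor — is confirming that $\rho^n e \downarrow 0$ in $E$, i.e. that $\bigwedge_n \rho^n e = 0$. This is standard in Archimedean Riesz spaces, but since the paper only assumes countable Dedekind completeness, I would note that $\bigwedge_n \rho^n e$ is a lower bound $u \geq 0$ satisfying $\rho u \geq u$ is not quite what we want; instead argue directly that if $0 \leq u \leq \rho^n e$ for all $n$ then $u \leq \rho^n e$ forces $u = 0$ via the Archimedean property (which holds in any Dedekind $\sigma$-complete Riesz space), or simply cite this fact as in \cite{zaanen2012introduction}. Everything else is a routine chain of inequalities using the two bulleted facts about absolute values and positive operators.
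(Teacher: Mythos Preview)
Your proposal is correct and follows essentially the same route as the paper's own proof: verify that $V$ is countably chain complete, that $S$ is an order-preserving self-map on $V$ via the chain $|Sv| \leq |h| + K|v| \leq e + \rho e/(1-\rho) = e/(1-\rho)$, and that $S$ is absolutely order contracting with witness $K$ using $K^n|v-w| \leq \tfrac{2}{1-\rho}\rho^n e \downarrow 0$, then invoke Theorem~\ref{t:rieszcon}. Your handling of the Archimedean step is also exactly what the paper does (it cites \cite{marinacci2019unique} for the fact that countably Dedekind complete Riesz spaces are Archimedean).
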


\begin{proof}
    Let the primitives be as stated, with conditions
    (i)--(ii) in force.  Note that, being order continuous, $K$ is
    order preserving and hence positive.  Observe that $S$ is a self-map on $V$, 
    since, for $v \in V$ we have
    \begin{equation*}
        |Sv| \leq |h| + |K v| 
        \leq |h| + K|v| 
        \leq e + K \frac{e}{1-\rho} 
        \leq e + \rho \frac{e}{1-\rho} 
        = \frac{e}{1-\rho}.
    \end{equation*}
    Hence $Sv \in V$.

    From (i) we obtain $|S v - Sw| \leq K|v - w|$ for any
    $v, w \in E$. Moreover, for any $v, w\in V$, we have
    \begin{equation*}
        K^n |v - w| 
        \leq K^n (|v|+|w|)
        \leq K^n \left(\frac{e}{1-\rho} + \frac{e}{1-\rho}\right)
        \leq K^n \frac{2 e}{1-\rho}
        \leq \rho^n \frac{2e}{1-\rho}.
    \end{equation*}
    By the Archimedean property, which holds in every countably Dedekind complete
    Riesz space (see \cite{marinacci2019unique}),
    the infimum on the right hand side is zero.
    Hence $S$ is absolutely order contracting. Since $E$ is countably Dedekind
    complete, $V$ is countably chain complete. 
    The claims now follow from Theorem~\ref{t:rieszcon}.
\end{proof}

\subsection{Application: Data Valuation}\label{ss:dv}

We close Section~\ref{s:fprs} with an application of the fixed point result in
Theorem~\ref{t:nceccon}.  The application involves the method for valuing
firm-specific private data proposed in \cite{veldkamp2023valuing}. 
(We treat it as a fixed point problem rather than a dynamic
program because profit maximization can be solved
period-by-period, without any dynamic considerations.)
 We consider a
slightly more realistic version of the model where the discount rate over cash
flows changes over time.  In particular,
analogous to \cite{veldkamp2023valuing}, the value of the firm at time $t$ is given by 
\begin{equation}\label{eq:rvv}
    V_t = \max_{k_t, \ell_t} 
    \left\{
        a(s_t) f(k_t, \ell_t) - w \ell_t - r k_t
    \right\}
    + \EE_t \beta_{t+1} V_{t+1},
\end{equation}
where $s_t$ is the stock of data,  $k_t$
is capital stock, $\ell_t$ is labor input, $w$ is the wage rate, $r$ is the
current rental rate of capital, and $\beta_{t+1}$ is the prevailing discount factor over
the time interval $t$. The first term on the right is current profits with
output price normalized to unity.
We suppose that 
\begin{itemize}
    \item $(\beta_t)$ is driven by stochastic kernel $Q$ on measurable space $(\Bsf, \bB)$
    and 
    \item $(s_t)$ is driven by stochastic kernel $P$ on measurable space $(\Ssf, \sS)$.
\end{itemize}
The function $a$ in \eqref{eq:rvv} is an $\sS$-measurable map from $\Ssf$ to
$\RR_+$. In this Markov environment, $V_t$ has the form $V_t = v(b_t, s_t)$ for
suitable choice of $v$. This Markov setting allows us to convert \eqref{eq:rvv}
into the functional equation
\begin{equation}\label{eq:vbd}
    v(b, s) = \pi(s) + \int \int b' v(b', s') Q(b, \diff b') P(s, \diff s'),
\end{equation}
where
\begin{equation*}
    \pi(s) \coloneq
    \max_{k, \ell} 
    \left\{
        a(s) f(k, \ell) - w \ell - r k
    \right\}.
\end{equation*}
Letting $x = (b, s)$, which takes values in the product space $\Xsf \coloneq
\Bsf \times \Ssf$, we can rewrite \eqref{eq:vbd} as 
\begin{equation}\label{eq:vx}
    v = \pi + K v 
    \quad \text{ on } \Xsf,
\end{equation}
where, overloading notation, $\pi(x) \coloneq \pi(b, s) \coloneq \pi(s)$, and, in
addition, $K$ is the positive linear operator defined by
\begin{equation*}
    (K f)(x) 
    = (Kf)(b, s) 
    = \int \int b' f(b', s') Q(b, \diff b') P(s, \diff s').
\end{equation*}
We call $v \in \RR^\Xsf$ a \navy{solution to the data valuation problem} if $v$ solves
\eqref{eq:vx}. To obtain such a solution we impose the conditions below.
Parts (i) and (ii) impose some regularity and part (iii) is a drift condition that
ensures that discounted
expected profits do not grow so fast that they prevent finite solutions.

\begin{assumption}\label{a:dv}
    The following conditions hold:
    \begin{enumerate}
        \item $\Bsf \subset [b_1, b_2] \subset (0, \infty)$,
        \item $\pi$ is in $bm\Xsf$ with $\pi_0 \coloneq \inf_{s \in S} \pi(s) >
        0$, 
        \item $\int \pi(s') P(s, \diff s') \leq \alpha \pi(s) + \lambda$ for some
            positive constants $\alpha, \lambda$ satisfying
            \begin{equation*}
                \alpha + \frac{\lambda}{\pi_0}
                < \frac{1}{b_2}.
            \end{equation*}
    \end{enumerate}
\end{assumption}

We now obtain a solution to the data valuation problem under the conditions in 
Assumption~\ref{a:dv}.
We begin with some preliminary facts.

\begin{lemma}\label{l:bmx}
    Let $V \subset \RR^{\Xsf}$ be closed under pointwise suprema. If $(v_n)$ is
    a sequence in $V$ and $v \in V$, then $v_n(x) \uparrow v(x) \text{ in } \RR 
    \text{ for all } x \in \Xsf$ if
        and only if $v_n \uparrow v$.
\end{lemma}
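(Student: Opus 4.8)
The plan is to prove the two implications separately, each time reducing the claim to the fact that, under the stated closure hypothesis, the order-theoretic supremum in $V$ agrees with the pointwise supremum. Recall that $v_n \preceq v$ in $V$ means $v_n(x) \leq v(x)$ for every $x \in \Xsf$, and that $v_n \uparrow v$ means both that $(v_n)$ is increasing in $V$ and that $\bigvee_n v_n = v$, with the supremum taken in the poset $V$.

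For the ``if'' direction I would start from the assumption that $v_n(x) \uparrow v(x)$ in $\RR$ for every $x$. Pointwise monotonicity of each real sequence $(v_n(x))_n$ is precisely the statement that $(v_n)$ is increasing in $V$, and $v_n(x) \leq v(x)$ for all $x$ gives $v_n \preceq v$, so $v$ is an upper bound of $(v_n)$ in $V$. If $u \in V$ is any other upper bound, then $v_n(x) \leq u(x)$ for all $n$ and $x$, so $v(x) = \sup_n v_n(x) \leq u(x)$ for every $x$, i.e.\ $v \preceq u$; hence $v$ is the least upper bound and $v_n \uparrow v$. This direction uses nothing beyond the definition of the pointwise order.

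For the ``only if'' direction I would assume $v_n \uparrow v$ in $V$. Since $(v_n)$ is increasing in $V$, each $(v_n(x))_n$ is nondecreasing in $\RR$, and since $v_n \preceq v$ it is bounded above by $v(x) \in \RR$; hence $w(x) \coloneq \sup_n v_n(x)$ defines a genuine real-valued function $w$ with $w \leq v$ pointwise. This is where the hypothesis on $V$ enters: applying closure under pointwise suprema to the set $\{v_n : n \in \NN\}$ gives $w \in V$. Then $w$ is an upper bound of $(v_n)$ in $V$, so the least-upper-bound property of $v = \bigvee_n v_n$ forces $v \preceq w$, i.e.\ $v \leq w$ pointwise. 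Combining the two inequalities gives $w = v$, so $v_n(x) \uparrow v(x)$ in $\RR$ for all $x$.

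The only delicate point is making sure the candidate function $w$ really is an element of $\RR^\Xsf$ before invoking the closure hypothesis, and this is exactly what the boundedness observation $v_n \preceq v$ with $v \in \RR^\Xsf$ provides; everything else is routine bookkeeping comparing the pointwise order with the order-theoretic supremum in $V$.
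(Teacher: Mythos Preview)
Your argument is correct and mirrors the paper's proof essentially step for step: both directions proceed by showing that the pointwise supremum function lies in $V$ (via the closure hypothesis) and then comparing it to the order-theoretic supremum. One small wrinkle: your ``if'' and ``only if'' labels are swapped relative to the statement as written (``$A$ if $B$'' should mean $B \Rightarrow A$), though the content of each paragraph is fine.
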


\begin{proof}
    ($\Rightarrow$) Suppose that $v_n(x) \uparrow v(x)$ in $\RR$ for all $x \in 
    \Xsf$.  Since $V$ is closed under pointwise suprema, we have $v \in V$, so
    $v$ is an upper bound of $(v_n)$.  For any other upper bound $w$,
    we have $v_n \leq w$ for all $n$, so $v_n(x) \leq w(x)$ for all $x \in \Xsf$.
    Hence, $v \leq w$.  This proves that $v = \bigvee_n v_n$.  Clearly, $(v_n)$
    is increasing, so $v_n \uparrow v$.  ($\Leftarrow$) Suppose that $v_n 
    \uparrow v$ for some $v \in V$.  Fix $x \in \Xsf$ and note that $v_n(x)$ is 
    increasing and bounded above by $v(x)$.  Hence the pointwise supremum function 
    $s(x) = \sup_n v_n(x)$ exists in $\RR^{\Xsf}$ and $s \leq v$.  
    Since $V$ is closed under pointwise suprema, we also have $s \in V$.  
    Since $v_n \leq s \leq v$ for all $n$ and 
    $\bigvee_n v_n = v$, we see that $s = v$.  Hence $v_n(x) \uparrow v(x)$.
\end{proof}

\begin{lemma}\label{l:k_oc}
    The positive linear operator $K$ is order continuous on $bm\Xsf$.
\end{lemma}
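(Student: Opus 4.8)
The plan is to verify the definition of order continuity directly. Under Assumption~\ref{a:dv}(i) the operator $K$ is a self-map on $bm\Xsf$: since $b' \leq b_2$ and each $Q(b, \cdot) \otimes P(s, \cdot)$ is a probability on $\Xsf$, we have $\sup_x |(Kf)(x)| \leq b_2 \sup_x |f(x)|$, while measurability of $x \mapsto (Kf)(x)$ follows from the definition of stochastic kernels together with Fubini's theorem for bounded measurable integrands. So fix $(v_n) \subset bm\Xsf$ with $v_n \uparrow v \in bm\Xsf$. The first step is to rewrite this order-theoretic statement as pointwise convergence: the pointwise supremum $s(x) = \sup_n v_n(x)$ is Borel measurable and satisfies $v_1 \leq s \leq v$, so $s \in bm\Xsf$; being the least upper bound of $(v_n)$ in $bm\Xsf$, it must equal $v$, and therefore $v_n(x) \uparrow v(x)$ in $\RR$ for every $x \in \Xsf$. (This is the reasoning behind Lemma~\ref{l:bmx}.)

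Next I would carry the pointwise convergence through the integral. Fix $x = (b, s) \in \Xsf$ and set $g_n(b', s') = b' v_n(b', s')$. Since $0 \leq b' \leq b_2$ on $\Bsf$, the sequence $(g_n)$ increases pointwise to $g(b', s') = b' v(b', s')$ and is uniformly bounded in $n$. As $Q(b, \cdot) \otimes P(s, \cdot)$ is a probability measure, the bounded convergence theorem --- equivalently, the monotone convergence theorem applied to $g_n - g_1 \geq 0$ --- gives $(Kv_n)(x) \uparrow (Kv)(x)$ in $\RR$ for every $x$. (If one prefers to avoid Fubini, the same conclusion follows by applying a convergence theorem twice to the iterated integral defining $K$.)

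It remains to reverse the first step. Since $v_n \leq v$ and $K$ is a positive linear operator, $Kv \in bm\Xsf$ is an upper bound of the increasing sequence $(Kv_n)$; and if $w \in bm\Xsf$ is any other upper bound, then $(Kv)(x) = \sup_n (Kv_n)(x) \leq w(x)$ for all $x$, so $Kv \leq w$. Hence $\bigvee_n Kv_n = Kv$, i.e.\ $Kv_n \uparrow Kv$, which is exactly order continuity of $K$ on $bm\Xsf$. The only point requiring care --- and it is minor --- is that $\uparrow$ in the Riesz space $bm\Xsf$ really is pointwise monotone convergence, a fact used in both the first and last steps; this rests on $bm\Xsf$ being countably Dedekind complete, so that suprema of order-bounded sequences are computed pointwise. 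The sign bookkeeping in the convergence-theorem step is routine and poses no real obstacle.
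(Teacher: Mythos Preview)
Your proof is correct and follows essentially the same approach as the paper: convert $v_n \uparrow v$ to pointwise convergence via Lemma~\ref{l:bmx}, push this through the integral using dominated (or bounded/monotone) convergence to obtain $(Kv_n)(x) \uparrow (Kv)(x)$ pointwise, and then invoke Lemma~\ref{l:bmx} again to return to order convergence. The additional details you supply (that $K$ is a self-map on $bm\Xsf$, and the explicit unpacking of Lemma~\ref{l:bmx}) are fine but not strictly needed.
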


\begin{proof}
    Fix $v \in bm\Xsf$ and $v_n \uparrow
    v$ in $bm\Xsf$.  By Lemma~\ref{l:bmx}, we have $bv_n(b, s) \uparrow bv(b,
    s)$ for all $(b, s) \in \Xsf$. From the
    dominated convergence theorem we obtain $(Kv_n)(b, s) \uparrow
    (Kv)(b, s)$.  Applying Lemma~\ref{l:bmx} again yields $Kv_n \uparrow Kv$. 
\end{proof}

Let $e(b, s) \coloneq \frac{b_2}{b} \pi(s)$.  Clearly, $e \in bm\Xsf$ and $\pi \leq e$.

\begin{lemma}\label{l:rho}
    If Assumption~\ref{a:dv} holds, then there exists a $\rho \in [0, 1)$ such that 
    $Ke \leq \rho e$.
\end{lemma}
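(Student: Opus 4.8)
The plan is to compute $Ke$ explicitly and then bound it using the drift condition in Assumption~\ref{a:dv}(iii). The crucial simplification is that $e$ was chosen precisely so that the factor $b'$ appearing in the definition of $K$ cancels: since $e(b',s') = (b_2/b')\pi(s')$ and $b' > 0$ on $\Bsf$, we have $b'\,e(b',s') = b_2\,\pi(s')$, which no longer depends on $b'$. Hence, using that $Q(b,\cdot)$ is a probability measure for each $b \in \Bsf$,
\begin{equation*}
    (Ke)(b,s) = \int\int b'\,e(b',s')\,Q(b,\diff b')\,P(s,\diff s')
    = b_2 \int \pi(s')\,P(s,\diff s').
\end{equation*}

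Next I would apply Assumption~\ref{a:dv}(iii) to the remaining integral, which gives $(Ke)(b,s) \leq b_2\bigl(\alpha\,\pi(s) + \lambda\bigr)$ for all $(b,s) \in \Xsf$. It therefore suffices to find $\rho \in [0,1)$ with $b_2(\alpha\,\pi(s)+\lambda) \leq \rho\,e(b,s) = \rho\,(b_2/b)\,\pi(s)$ for every $(b,s) \in \Xsf$. Dividing by $b_2$ and by $\pi(s)$ — legitimate since $\pi(s) \geq \pi_0 > 0$ by part (ii) — this reduces to requiring
\begin{equation*}
    \alpha + \frac{\lambda}{\pi(s)} \leq \frac{\rho}{b}
    \qquad \text{for all } (b,s) \in \Xsf.
\end{equation*}

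To close the argument I would bound the left-hand side from above using $\pi(s) \geq \pi_0$ and the right-hand side from below using $b \leq b_2$ (from part (i)), so that it is enough to have $\alpha + \lambda/\pi_0 \leq \rho/b_2$, i.e.\ $\rho \geq b_2(\alpha + \lambda/\pi_0)$. Since $\alpha, \lambda, \pi_0$ are positive and, by the strict inequality in part (iii), $b_2(\alpha + \lambda/\pi_0) < 1$, the choice $\rho \coloneq b_2(\alpha + \lambda/\pi_0)$ lies in $(0,1) \subset [0,1)$ and yields $Ke \leq \rho e$. There is no real obstacle here; the computation is routine, and the only thing to watch is the direction of the inequalities when passing to the worst-case values $b = b_2$ and $\pi(s) = \pi_0$ — and it is exactly the strictness of the bound in Assumption~\ref{a:dv}(iii) that leaves room to take $\rho$ strictly below one, which is what Theorem~\ref{t:nceccon} requires.
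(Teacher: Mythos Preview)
Your proof is correct and follows essentially the same approach as the paper: compute $(Ke)(b,s)$ using the cancellation $b'e(b',s') = b_2\pi(s')$, apply the drift condition from Assumption~\ref{a:dv}(iii), then bound using $\pi(s) \geq \pi_0$ and $b \leq b_2$ to arrive at $\rho \coloneq b_2(\alpha + \lambda/\pi_0) < 1$. The only difference is cosmetic---the paper chains the inequalities directly while you phrase the final step as reducing to a sufficient condition---but the logic and the resulting $\rho$ are identical.
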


\begin{proof}
    For each $(b, s) \in \Xsf$, 
    \begin{align*}
        (Ke)(b, s) & = \int \int b' e(b', s') Q(b, \diff b') P(s, \diff s') \\
        & = \int \int b_2 \pi(s') Q(b, \diff b') P(s, \diff s') \\
        & = b_2 \int \pi(s') P(s, \diff s') \\
        & \leq b_2 (\alpha \pi(s) + \lambda) \\
        & = b_2 \left(\alpha + \frac{\lambda}{\pi(s)}\right) \pi(s) 
        \leq b_2 \left(\alpha + \frac{\lambda}{\pi_0}\right) \pi(s) 
        \leq b_2 (\alpha + \frac{\lambda}{\pi_0}) e(b, s).
    \end{align*}
    With $\rho \coloneq b_2 (\alpha + \frac{\lambda}{\pi_0}) < 1$ 
    we have $Ke \leq \rho e$.
\end{proof}

In the next proposition, we set $Sv \coloneq \pi + Kv$, while $e, \rho$ are as
stated in Lemma~\ref{l:rho}.

\begin{proposition}\label{p:vel}
    If Assumption~\ref{a:dv} holds, then
    \begin{enumerate}
        \item there exists a $v^* \in bm\Xsf$ that solves the data valuation problem, and
        \item $S^n \; v \too v^*$ for any $v \in [-e/(1-\rho), e/(1-\rho)] \subset bm\Xsf$.
    \end{enumerate}
\end{proposition}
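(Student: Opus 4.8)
The plan is to obtain Proposition~\ref{p:vel} as an immediate consequence of Theorem~\ref{t:nceccon}, applied with the Riesz space $E = bm\Xsf$, the affine map $Sv = \pi + Kv$, and the data $h = \pi$, $e(b,s) = (b_2/b)\pi(s)$, and $\rho = b_2(\alpha + \lambda/\pi_0)$. Recall that $bm\Xsf$ is a countably Dedekind complete Riesz space, so the standing hypothesis of Section~\ref{s:fprs} is met, and Theorem~\ref{t:nceccon} is available.

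First I would verify hypothesis (i) of Theorem~\ref{t:nceccon}: by construction $S$ has the form $v \mapsto h + Kv$ with $h = \pi \in bm\Xsf$ by Assumption~\ref{a:dv}(ii), and $K$ is an order continuous linear operator on $bm\Xsf$ by Lemma~\ref{l:k_oc}. Next I would verify hypothesis (ii): we have $e \in bm\Xsf$ (as already noted before Lemma~\ref{l:rho}), and $|\pi| \leq e$ because $\pi > 0$ together with $b \leq b_2$ gives $e = (b_2/b)\pi \geq \pi = |\pi|$; finally $Ke \leq \rho e$ with $\rho = b_2(\alpha + \lambda/\pi_0) \in [0,1)$ by Lemma~\ref{l:rho}, which in turn uses Assumption~\ref{a:dv}(i) and~(iii).

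With both hypotheses in force, Theorem~\ref{t:nceccon} delivers a unique fixed point $\bar v$ of $S$ in the order interval $V \coloneq [-e/(1-\rho), e/(1-\rho)] \subset bm\Xsf$, together with $S^n v \too \bar v$ for every $v \in V$. Setting $v^* \coloneq \bar v$, the fixed-point identity $v^* = \pi + K v^*$ is precisely equation~\eqref{eq:vx}, so $v^*$ solves the data valuation problem; and $v^* \in bm\Xsf$ since $V \subset bm\Xsf$. This establishes (i), while (ii) is exactly the convergence conclusion of Theorem~\ref{t:nceccon}.

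There is no genuinely hard step here: all the substantive content has been packaged into Lemmas~\ref{l:k_oc} and~\ref{l:rho} and into Theorem~\ref{t:nceccon} itself. The only points that merit a sentence of care are (a) recording that a fixed point of $S$ inside the order interval is literally a solution of \eqref{eq:vx}, hence of the data valuation problem, and (b) noting that membership of $v^*$ in $V$ yields the claimed regularity $v^* \in bm\Xsf$.
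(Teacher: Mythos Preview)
Your proposal is correct and follows essentially the same route as the paper's own proof: verify the hypotheses of Theorem~\ref{t:nceccon} via Lemma~\ref{l:k_oc} (order continuity of $K$) and Lemma~\ref{l:rho} (the drift bound $Ke \leq \rho e$), together with $|\pi| = \pi \leq e$, and then read off both conclusions directly from that theorem. Your write-up is in fact slightly more explicit than the paper's in spelling out why $|h| \leq e$ holds and why the fixed point lies in $bm\Xsf$.
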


\begin{proof}
    The fixed point of $S$ satisfies \eqref{eq:vx}.  By Lemma~\ref{l:k_oc}, $K$ is order
    continuous.  We showed above Lemma~\ref{l:rho} that there is some $e \in bm\Xsf$ 
    with $\pi \leq e$. Also, in that lemma, we showed that there is some $\rho \in [0, 1)$
    such that $K e \leq \rho e$.  
    All claims now follow from Theorem~\ref{t:nceccon}.
\end{proof}

Proposition~\ref{p:vel} offers a constructive solution to the data
valuation problem from a given set of parameters. For empirical
studies, the next step is to estimate or calibrate these structural
parameters using aggregate data. In practice, estimating these parameters
typically requires numerical solution of \eqref{eq:vbd},
and the numerical procedure can exploit the iterative algorithm
described in part (ii) of the proposition.

\section{Optimality}\label{s:opti}

This section uses the fixed point theorems in Section~\ref{s:fprs} to extend the
optimality results from \cite{sargent2025partially}. Throughout this section, 
we say that $(V, \TT)$ is 
\begin{itemize}
    \item \navy{order continuous} if each $T_\sigma \in \TT$ is order continuous in $V$,
    \item \navy{asymptotically order contracting} if each $T_\sigma \in \TT$ is
        asymptotically order contracting on $V$, and
    \item \navy{absolutely order contracting} if each $T_\sigma \in \TT$ is
        absolutely order contracting on $V$.
\end{itemize}

In all of Section~\ref{s:opti},
\begin{itemize}
    \item $(V, \TT)$ is a regular ADP and
    \item $V$ is a subset of a countably Dedekind complete Riesz space $E = (E, \leq)$.
\end{itemize}

\subsection{Concave ADPs}\label{ss:concave}

This section applies the fixed-point result for concave operators from
Theorem~\ref{t:mm} (originally proved in \cite{marinacci2019unique}), to
construct the optimality results for ADPs where policy operators are
concave. In the result below, $V = [0, b]$ for some $0 < b \in E$.  

\begin{theorem}\label{t:riesz_con}
    If $(V, \TT)$ is order continuous, every $T_\sigma \in
    \TT$ is concave on $V$, and $T_\sigma \, v \neq v$ whenever $v
    \in \partial V$, then 
    \begin{enumerate}
        \item the fundamental optimality properties hold, and
        \item VFI, OPI and HPI all converge.
    \end{enumerate}
\end{theorem}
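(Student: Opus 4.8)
The plan is to reduce the theorem to the abstract optimality and algorithmic machinery by checking its two genuine inputs: that $(V,\TT)$ is well-posed with every policy operator ``upwardly stable'' (iteration converging up to its fixed point), and that the Bellman operator $T$ has a fixed point in $V$. The first input is essentially a one-line appeal to Theorem~\ref{t:mm}; the second is the crux and is where the order-theoretic structure of $E$ must be used.

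\textbf{Step 1 (well-posedness and upward stability).} Each $T_\sigma$ is an order continuous concave self-map on $V=V_b=[0,b]$ with $T_\sigma v\neq v$ on $\partial V$, so Theorem~\ref{t:mm} applies: $T_\sigma$ has a unique fixed point $v_\sigma\in V$, and $T_\sigma^n v\uparrow v_\sigma$ for every $v\in V$ with $v\preceq T_\sigma v$. Hence $(V,\TT)$ is well-posed, and, taking $v=0$ (which satisfies $0\preceq T_\sigma 0$ because $T_\sigma$ maps $V$ into $V$), $T_\sigma^n 0\uparrow v_\sigma$.

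\textbf{Step 2 (the Bellman operator has a fixed point).} Since $(V,\TT)$ is regular, for each $v\in V$ a $v$-greedy $\sigma$ exists and $Tv=T_\sigma v\in V$, so $T$ is a well-defined, order preserving self-map on $V$. I would then show $T$ is order continuous: if $v_n\uparrow v$ in $V$, then $T_\sigma v_n\uparrow T_\sigma v$ for each $\sigma$, and since $Tv_n$ (each $n$), $\bigvee_n Tv_n$ (an increasing countable chain in $[0,b]$), and $Tv$ all exist, the interchange-of-iterated-suprema identity gives $\bigvee_n Tv_n=\bigvee_\sigma\bigvee_n T_\sigma v_n=\bigvee_\sigma T_\sigma v=Tv$, i.e.\ $Tv_n\uparrow Tv$. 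As $V=[0,b]$ is an order interval in the countably Dedekind complete Riesz space $E$, it is countably chain complete, so the Tarski--Kantorovich theorem (quoted after Theorem~\ref{t:mm}) yields a least fixed point of $T$ and, for every $v\in V_U=\{v:v\preceq Tv\}$, that $\bigvee_n T^n v$ is a fixed point of $T$. In particular the Bellman equation has a solution and $T^n v\uparrow\bar v$ for a fixed point $\bar v$ whenever $v\in V_U$.

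\textbf{Step 3 (optimality and convergence).} Given well-posedness, regularity, upward stability of every $T_\sigma$, and a Bellman fixed point, part (i) follows from the abstract optimality results of \cite{sargent2025partially}; a self-contained version assembles the pieces as follows. Any fixed point $\bar v$ of $T$ satisfies $\bar v=v_{\sigma^*}$ for a $\bar v$-greedy $\sigma^*$ (regularity plus well-posedness), so $\bar v\in V_\Sigma$; moreover $\bar v$ is the greatest element of $V_\Sigma$, since whenever $T_\sigma w\preceq w$ the operator $T_\sigma$ is a self-map of $[0,w]$, so Tarski--Kantorovich gives a fixed point of $T_\sigma$ there, which by uniqueness is $v_\sigma\preceq w$ (apply with $w=\bar v$). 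This gives B1 and, since every Bellman solution is the greatest element of $V_\Sigma$, the uniqueness in B2; B3 is the usual short calculation through $T_\sigma v^*=v_\sigma=v^*=Tv^*$. For (ii), VFI convergence is precisely the Step~2 statement for $T$ combined with uniqueness of its fixed point, while OPI and HPI convergence follow the standard sandwiching: $W_m$ and $H$ map $V_U$ into $V_U$ with $v\preceq W_m v$, $v\preceq H v$; one has $Tw\preceq W_m w$ and $Tw\preceq H w$ (upward stability) and $W_m^n v,\,H^n v\preceq v^*$ (as iterates remain $\preceq v^*=\bigvee V_\Sigma$), together with $W_m^n v,\,H^n v\succeq T^n v$, forcing all three limits to equal $v^*$.

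The main obstacle is Step~2: because $Tv=\bigvee_\sigma T_\sigma v$ involves a supremum, concavity is destroyed and Theorem~\ref{t:mm} cannot be applied to $T$ itself. The resolution is that order continuity \emph{does} pass through the policy supremum (the interchange-of-suprema argument), and $[0,b]$ is countably chain complete, so Tarski--Kantorovich supplies the fixed point of $T$ — the concrete realization, in this setting, of the ``order-completeness plus order-preservation'' strategy advertised in the introduction.
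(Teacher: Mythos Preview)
Your proof is correct and follows the same strategy as the paper: well-posedness via Theorem~\ref{t:mm}, then the abstract optimality and convergence machinery for order continuous ADPs on countably chain complete sets. Where the paper simply cites Theorem~5.3 of \cite{sargent2025partially} as a black box, you unpack its content---order continuity of $T$ via interchange of iterated suprema, a Bellman fixed point by Tarski--Kantorovich, and the sandwiching arguments for OPI and HPI---yielding a more self-contained but logically equivalent argument.
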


\begin{proof}[Proof of Theorem~\ref{t:riesz_con}]
    Let $(V, \TT)$ be as stated. Since $E$ is countably Dedekind complete,
    $V$ is countably chain complete. As $(V, \TT)$ is order continuous, 
    Theorem~5.3 of \cite{sargent2025partially} implies that the conclusions of
    Theorem~\ref{t:riesz_con} hold whenever $(V, \TT)$ is 
    well-posed.  This well-posedness follows directly from the conditions in
    Theorem~\ref{t:riesz_con} and the fixed point result in Theorem~\ref{t:mm}.
\end{proof}

\subsection{Order Contracting ADPs}

This section applies the fixed-point results in Lemma~\ref{l:rieszc} and 
Theorem~\ref{t:rieszcon} to build optimality results for ADPs
where policy operators have long run contractivity properties. 
We show below that these long-run contractivity conditions encompass dynamic
programs with time-varying discount rates.

\begin{theorem}\label{t:rieszcadp}
    Let $V$ be countably chain complete. If $(V, \TT)$ is order continuous
    and asymptotically order contracting, then
    \begin{enumerate}
        \item the fundamental optimality properties hold, and
        \item VFI, OPI and HPI all converge.
    \end{enumerate}
\end{theorem}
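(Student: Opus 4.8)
The plan is to mirror the proof of Theorem~\ref{t:riesz_con}: use the abstract machinery of \cite{sargent2025partially} to reduce everything to well-posedness of $(V, \TT)$, and then obtain well-posedness from Lemma~\ref{l:rieszc}. By the standing assumptions of this section, $(V, \TT)$ is a regular ADP with $V$ a subset of a countably Dedekind complete Riesz space; by hypothesis it is also order continuous. Under these conditions, Theorem~5.3 of \cite{sargent2025partially} delivers both conclusions of the present theorem --- the fundamental optimality properties, and convergence of VFI, OPI and HPI --- as soon as $(V, \TT)$ is known to be well-posed. So the real content to check is well-posedness.

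Next I would verify well-posedness, i.e.\ that each $T_\sigma \in \TT$ has a unique fixed point in $V$. Fix $\sigma \in \Sigma$ and apply Lemma~\ref{l:rieszc} with the countably chain complete set $V$ and the self-map $S = T_\sigma$. Its two hypotheses hold by assumption: $(V, \TT)$ order continuous gives that $T_\sigma$ is an order continuous self-map on $V$, and $(V, \TT)$ asymptotically order contracting gives that $T_\sigma$ is asymptotically order contracting on $V$. Hence $T_\sigma$ has a unique fixed point $v_\sigma \in V$ (and, as a byproduct, $T_\sigma^n v \too v_\sigma$ for every $v \in V$). Since $\sigma$ was arbitrary, $(V, \TT)$ is well-posed, and combining this with the reduction in the previous paragraph finishes the argument.

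I do not anticipate a genuine obstacle here; the result is essentially a corollary, and the work amounts to matching hypotheses to Lemma~\ref{l:rieszc}. The only place to be slightly careful is that the Bellman operator $T$ must be well-defined for statements like ``$T^n v \uparrow v^*$'' to parse, but this is already ensured by regularity: for each $v \in V$ there is a $v$-greedy $\sigma$, and then $Tv = T_\sigma v$ exists in $V$. Everything beyond that is delegated to Theorem~5.3 of \cite{sargent2025partially} and Lemma~\ref{l:rieszc}.
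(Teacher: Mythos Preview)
Your proposal is correct and matches the paper's proof exactly: reduce to well-posedness via Theorem~5.3 of \cite{sargent2025partially}, then obtain well-posedness from Lemma~\ref{l:rieszc}. The paper's own proof is just those two sentences, so you have captured the full argument.
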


\begin{proof}[Proof of Theorem~\ref{t:rieszcadp}]
    Let the stated conditions hold.  In view of Theorem~5.3 of
    \cite{sargent2025partially}, we only need to show that $(V, \TT)$ is
    well-posed.  Well-posedness follows from Lemma~\ref{l:rieszc}. 
\end{proof}

\begin{remark}\label{rmk:cc}
    If $V$ is also chain complete, then the claims in Theorem~\ref{t:rieszcadp} 
    hold without order continuity.
    Since the Bellman operator $T$ is an order preserving
    self-map on the chain complete set $V$, the Knaster-Tarski
    theorem implies that $T$ has at least one fixed point in $V$.  
    The first claim then follows from Theorem~5.1 of \cite{sargent2025partially}.  
    (The required downward stability is a consequence of Lemma~\ref{l:rieszc}.) 
    Regarding convergence, let $v_\perp$ be the least element of $V$,
    let $v^*$ be the value function, and let $\sigma$ be an optimal
    policy. Fix $v \in V_U$.  We have $v_\perp \leq T_\sigma v_\perp \leq T v$ and hence, 
    iterating on this inequality, $T_\sigma^n v_\perp \leq T^n v$ for all $n \in \NN$. 
    By Lemma~\ref{l:rieszc}, we have $T_\sigma^n v_\perp \uparrow v_\sigma = v^*$. By 
    Lemma~6.2 of \cite{sargent2025partially}, $T^n v \leq v^*$, hence, 
    $T^n v \uparrow v^*$.  Convergence of OPI and HPI follows from Corollary~6.3 of
    \cite{sargent2025partially}.
\end{remark}

The next result is a direct consequence of Theorem~\ref{t:rieszcadp}.

\begin{theorem}\label{t:rieszbk}
    Let $V$ be countably chain complete. If $(V, \TT)$ is absolutely order contracting,
    then 
    \begin{enumerate}
        \item the fundamental optimality properties hold, and
        \item VFI, OPI and HPI all converge.  
    \end{enumerate}
\end{theorem}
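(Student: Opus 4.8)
The plan is to deduce Theorem~\ref{t:rieszbk} from Theorem~\ref{t:rieszcadp} by checking that ``absolutely order contracting'' implies the two hypotheses of that theorem, namely that $(V,\TT)$ is order continuous and asymptotically order contracting. This is exactly the content of Lemma~\ref{l:ocfl}, applied policy operator by policy operator. So the proof is essentially a one-line reduction.

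\medskip
\emph{First} I would recall that each $T_\sigma \in \TT$ is, by definition of an ADP, an order preserving self-map on $V$, and that by assumption each $T_\sigma$ is absolutely order contracting on $V$. \emph{Second}, I would invoke Lemma~\ref{l:ocfl} with $S = T_\sigma$: since $T_\sigma$ is order preserving and absolutely order contracting on $V \subset E$, the lemma gives that $T_\sigma$ is both order continuous and asymptotically order contracting on $V$. Since this holds for every $\sigma \in \Sigma$, the ADP $(V,\TT)$ is order continuous and asymptotically order contracting in the sense defined at the start of Section~\ref{s:opti}. \emph{Third}, since $V$ is assumed countably chain complete, all hypotheses of Theorem~\ref{t:rieszcadp} are met, and its conclusions --- the fundamental optimality properties together with convergence of VFI, OPI and HPI --- are precisely the conclusions asserted here. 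That completes the argument.

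\medskip
There is essentially no obstacle: the only thing to be careful about is that Lemma~\ref{l:ocfl} requires $V$ to sit inside the countably Dedekind complete Riesz space $E$ (which is the standing assumption of both Section~\ref{s:fprs} and Section~\ref{s:opti}) and that the ambient $E$, not just $V$, carries the order continuous linear operator $K$ witnessing the absolute order contraction --- but this is built into the definition of ``absolutely order contracting,'' so nothing extra is needed. Thus the proof can be stated as: ``Fix any $T_\sigma \in \TT$. It is order preserving by definition and absolutely order contracting by hypothesis, so by Lemma~\ref{l:ocfl} it is order continuous and asymptotically order contracting on $V$. Hence $(V,\TT)$ is order continuous and asymptotically order contracting, and since $V$ is countably chain complete, Theorem~\ref{t:rieszcadp} delivers both claims.''
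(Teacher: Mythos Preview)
Your proposal is correct and follows essentially the same approach as the paper: note that each $T_\sigma$ is order preserving by the definition of an ADP, apply Lemma~\ref{l:ocfl} to obtain order continuity and the asymptotic order contraction property, and then invoke Theorem~\ref{t:rieszcadp}.
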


\begin{proof}
    Since $(V, \TT)$ is an ADP, each $T_\sigma$ is, by
    definition, order preserving. 
    Under the stated conditions, Lemma~\ref{l:ocfl} implies that 
    $(V, \TT)$ is order continuous and asymptotically order contracting.
    The claims now follow from Theorem~\ref{t:rieszcadp}.
\end{proof}

The final result of this section applies the fixed-point result in
Theorem~\ref{t:nceccon} to build an optimality result for ADP where policy
operators are affine mappings.  Let $(E, \TT)$ be an ADP. We call $(E, \TT)$
\navy{affine} if each $T_\sigma \in \TT$ has the form
\begin{equation*}
    T_\sigma \, v = r_\sigma + K_\sigma \, v
    \qquad (v \in E)
\end{equation*}
for some $r_\sigma \in E$ and order continuous linear operator 
$K_\sigma \colon E \to E$.
In this setting, we say that \navy{Condition~C holds} if 
\begin{equation*}
    \text{$\exists \; e \in E$ and $\rho \in [0, 1)$ with $|r_\sigma| \leq e$ and
        $K_\sigma \, e \leq \rho e$ for all $\sigma \in \Sigma$}.    
\end{equation*}
When Condition~C holds, we let
\begin{equation*}
    V \coloneq [a, b] 
    \quad \text{where } 
    a \coloneq - \frac{e}{1-\rho}
    \text{ and }
    b \coloneq \frac{e}{1-\rho}.
\end{equation*}

\begin{theorem}\label{t:rieszaff}
    Let $(E, \TT)$ be a regular ADP.  
    If $(E, \TT)$ is an affine ADP and Condition~C holds, then
    \begin{enumerate}
        \item $(V, \TT)$ is also an ADP,
        \item the fundamental optimality properties hold for $(V, \TT)$, and
        \item VFI, OPI and HPI all converge in $(V, \TT)$.  
    \end{enumerate}
\end{theorem}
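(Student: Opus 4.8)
The plan is to show that $(V,\TT)$ inherits just enough structure from $(E,\TT)$ and Condition~C to fall directly under Theorem~\ref{t:rieszbk}, so that (ii) and (iii) become corollaries of that result once (i) is in place.

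First I would establish (i). Since each $K_\sigma$ is order continuous it is order preserving, hence positive, so $|K_\sigma u| \leq K_\sigma |u|$ for all $u \in E$. Repeating the self-map estimate from the proof of Theorem~\ref{t:nceccon} with $h = r_\sigma$ and $K = K_\sigma$: for $v \in V$,
\begin{equation*}
    |T_\sigma v| \leq |r_\sigma| + K_\sigma |v| \leq e + K_\sigma \frac{e}{1-\rho} \leq e + \rho\,\frac{e}{1-\rho} = \frac{e}{1-\rho},
\end{equation*}
so $T_\sigma v \in V$. As $T_\sigma$ is affine with positive linear part it is order preserving on $E$, hence order preserving as a self-map on $V$. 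Since $\TT$ is nonempty by hypothesis, $(V,\TT)$ is an ADP, which proves (i).

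Next I would verify the remaining hypotheses of Theorem~\ref{t:rieszbk} for the ADP $(V,\TT)$. The set $V = [a,b]$ is an order interval in the countably Dedekind complete Riesz space $E$, so it is countably chain complete. Regularity transfers from $(E,\TT)$: for $v \in V \subset E$ there is a $v$-greedy $\sigma$ with $T_\tau v \leq T_\sigma v$ in $E$ for all $\tau$; since all the $T_\tau v$ lie in $V$ and $V$ carries the induced order, $\sigma$ is also $v$-greedy in $(V,\TT)$. Finally $(V,\TT)$ is absolutely order contracting: taking $K_\sigma$ as the witnessing operator, $|T_\sigma v - T_\sigma w| = |K_\sigma(v-w)| \leq K_\sigma |v-w|$, while for $v,w \in V$ we have $K_\sigma^n |v-w| \leq K_\sigma^n \frac{2e}{1-\rho} \leq \rho^n \frac{2e}{1-\rho}$, whose infimum over $n$ is $0$ by the Archimedean property of $E$; hence $K_\sigma^n |v-w| \too 0$. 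With $(V,\TT)$ now seen to be a regular, absolutely order contracting ADP on a countably chain complete set, parts (ii) and (iii) follow at once from Theorem~\ref{t:rieszbk}.

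I do not expect a serious obstacle here: the argument is essentially bookkeeping that reuses the self-map estimate and the absolute-order-contraction estimate from the proof of Theorem~\ref{t:nceccon}, now applied policy by policy. The one point needing a little care is confirming that regularity genuinely passes from $(E,\TT)$ to $(V,\TT)$ — that shrinking the domain to the order interval $V$ does not disturb the greedy-policy condition — but this is immediate once one observes that the relevant images $T_\tau v$ already lie in $V$ and that $V$ is equipped with the order induced from $E$.
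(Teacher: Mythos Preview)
Your argument is correct and uses the same core estimates as the paper: the self-map bound $|T_\sigma v|\leq e/(1-\rho)$, the transfer of regularity from $E$ to $V$, and the geometric decay $K_\sigma^n|v-w|\leq \rho^n\,2e/(1-\rho)$. The packaging differs slightly: you verify absolute order contraction and then invoke Theorem~\ref{t:rieszbk}, whereas the paper instead obtains well-posedness from Theorem~\ref{t:nceccon}, notes order continuity of each $T_\sigma$ directly (inherited from $K_\sigma$), and finishes by citing Theorem~5.3 of \cite{sargent2025partially}. Your route has the mild advantage of staying entirely within the paper's own results rather than appealing to the external reference; the paper's route makes the well-posedness step more visible. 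Substantively the two are equivalent, since Theorem~\ref{t:rieszbk} unwinds through Lemma~\ref{l:ocfl} and Theorem~\ref{t:rieszcadp} to the same endpoint.
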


\begin{proof}
    Let the stated conditions hold. Fix $T_\sigma \in \TT$, by order continuity,
    $K_\sigma$ is order preserving and positive. For any $v \in V$, we have
    \begin{equation}
        |T_\sigma v| \leq |r_\sigma| + |K_\sigma v| 
        \leq |r_\sigma| + K_\sigma|v| 
        \leq e + K_\sigma \frac{e}{1-\rho} 
        \leq e + \rho \frac{e}{1-\rho} 
        = \frac{e}{1-\rho}.
    \end{equation}
    This shows that $T_\sigma \, v \in V$ and, therefore, $(V, \TT)$ is an ADP. 
    Since $E$ is
    countably Dedekind complete, the order interval $V$ is countably chain complete.  
    Theorem~\ref{t:nceccon} implies that $(V, \TT)$ is well-posed.  
    Since $(E,\TT)$ is regular, $(V, \TT)$ is also regular. 
    Since each policy operator is order
    continuous on $E$, the ADP $(V, \TT)$ is order continuous.
    All claims now follow from Theorem~5.3 of \cite{sargent2025partially}.
\end{proof}

\begin{figure}[H]
    \centering
    \begin{tikzpicture}[
        node distance = 1.5cm and 1.2cm,
        box/.style = {rectangle, draw, rounded corners, align=center, 
                      minimum width=3.4cm, minimum height=1cm},
        arrow/.style = {->, thick},
        outerbox/.style = {rectangle, draw, rounded corners, dashed, inner sep=0.5cm}
    ]

    \node[box] (concavity) {concavity\\no fixed point in lower perimeter};
    \node[box, below=of concavity] (asymptotic) {asymptotically order contracting};
    \node[box, below=of asymptotic] (absolute) {absolutely order contracting};

    \node[box, right=of asymptotic] (well) {well-posedness};
    \node[box, right=of well] (fundamental) {fundamental\\optimality results};
    \node[box, below=of fundamental] (regularity) {regularity};

    \draw[arrow] (concavity) -- (well);
    \draw[arrow] (asymptotic) -- (well);
    \draw[arrow] (absolute) -- (asymptotic);
    \draw[arrow] (well) -- (fundamental);
    \draw[arrow] (regularity) -- (fundamental);

    \node[outerbox, fit=(concavity) (absolute) (fundamental) (regularity)] (all) {};
    \end{tikzpicture}
    \caption{Directed graph showing implications
    among the major theoretical concepts}
    \label{fig:concept}
\end{figure}

Figure~\ref{fig:concept} shows a directed graph listing
implications between the major concepts discussed in the theoretical
component of the paper.

\section{ADP Applications}\label{s:appl}

Next we consider applications of the optimality results stated
above.  Throughout this section, we consider an agent who interacts with
a state process $(Y_t, Z_t)_{t \geq 0}$ by choosing an action path $(A_t)_{t \geq 0}$
to maximize lifetime utility in a Markov setting, where $Y_t$ is endogenous 
and $Z_t$ is exogenous.  Let $\Xsf \coloneq \Ysf \times \Zsf$ be a state space
where $(Y_t, Z_t)$ takes values
and let $\Asf$ be an action space where $A_t$ takes values.  
Assume that $\Asf$ is a separable metrizable space.
The agent responds to the state $x \coloneq (y, z)$ by choosing action $a$ from 
$\Gamma(x) \subseteq \Asf$, 
where $\Gamma$ is a nonempty correspondence from $\Xsf$ to $\Asf$.  
Let 
\begin{equation*}
    \Gsf = \left\{
        (x, a) \in \Xsf \times \Asf \,:\, a \in \Gamma(x)
    \right\}.
\end{equation*}
Let $r \colon \Gsf \to \RR$ be a reward function and let $P$ be a stochastic kernel
on $\Zsf$.  Given a feasible state-action pair $(x, a)$, reward $r(x, a)$ is received,
the next period exogenous state $z'$ is randomly drawn from $P(z, \cdot)$ and the next
period endogenous state $y'$ is governed by 
\begin{equation*}
    y' = H(y, a, z'),
\end{equation*}
where $H$ is measurable and maps $\Ysf \times \Asf \times \Zsf$ to $\Ysf$. 
Future rewards are discounted via a stochastic discounting process 
$(\beta_t)_{t \geq 0}$.

\begin{assumption}\label{a:regular}
    The following conditions hold:
    \begin{enumerate}
        \item the correspondence $\Gamma \colon \Xsf \to \Asf$ is 
            compact-valued and continuous,
        \item the reward function $r$ is non-negative, bounded and continuous on $\Gsf$,
            and
        \item the map $a \mapsto \int v(H(y, a, z'), z') P(z, \diff z')$
            is continuous on $A$ for any $v \in bm\Xsf$ and $(y, z) \in \Xsf$.
    \end{enumerate}
\end{assumption}

Assumption~\ref{a:regular} is used to obtain regularity of the associated ADP,
as will be discussed below.  We focus on the regular case to constrain
complexity. While the conditions in Assumption~\ref{a:regular} are
somewhat strict, they do hold in many important cases.

\begin{example}
    If action space $A$ is finite, then condition (iii) of Assumption~\ref{a:regular}
    always holds.  (Condition (i) can be removed in this setting.) 
\end{example}

\begin{example}\label{exm:iid}
    Let $\Ysf \subseteq \RR^n$ and $\Zsf \subseteq \RR^n$. 
    Suppose the exogenous shock $Z_t$ is {\sc iid} with a continuous density $\psi$.  
    We reduce the state space from $\Xsf = \Ysf \times \Zsf$ to $\Ysf$. 
    Assume that
    \begin{enumerate}
        \item for a given $(y, a) \in \Ysf \times \Asf$, the map
        $z' \mapsto y' = H(y, a, z')$ is a $C^1$-diffeomorphism from
        $\Zsf$ to $\Ysf$, and
        \item the map $a \mapsto z' = H^{-1}(y, a, y')$ and 
        $a \mapsto |J(H^{-1})(y, a, y')|$
        are continuous for any given $(y, y') \in \Ysf \times \Ysf$,
        where 
        $J(H^{-1})(y, a, y') \coloneq 
        \det \left( \frac{\diff H^{-1}(y, a, y')}{\diff y'} \right)$
        is the Jacobian determinant of the transformation $H^{-1}(y, a)$.
    \end{enumerate}
    Then the condition (iii) of Assumption~\ref{a:regular} holds.
    To see this, fix $v \in bm\Xsf$ and $M \in \RR_+$ such that
    $|v| \leq M$. For given $y \in \Ysf$, let the map 
    $f \colon \Asf \to \RR$ be defined by
    $f(a) = \EE[v \circ H(y, a, Z)]$.  We need to show that $f$ is
    continuous on $A$.  To this end, we introduce $Y \coloneq H(y, a, Z)$  
    and observe that the density $h$ of $Y$ is 
    \begin{equation*}
        h(y') = \psi(H^{-1}(y, a, y')) |J(H^{-1})(y, a, y')|,
    \end{equation*}
    as follows from the multivariate change of variables formula 
    (see, e.g., \cite{schwartz1954}). For any $B \subseteq \Ysf$,
    \begin{equation*}
        P[Y \in B] 
        = P[H(y, a, Z) \in B] 
        = P[Z \in H^{-1}(y, a, B)] = \int_{H^{-1}(y, a, B)} \psi(z') \diff z'.
    \end{equation*}
    Using the change of variables $z' = H^{-1}(y, a, y')$, 
    $\diff z' = |J(H^{-1})(y, a, y')| \diff y'$,
    we have
    \begin{equation*}
        P[Y \in B] 
        = \int_B \psi(H^{-1}(y, a, y')) |J(H^{-1})(y, a, y')| \diff y'.
    \end{equation*}
    So it follows that the stated $h$ is a density for $Y = H(y, a, Z)$.
    Now we can rewrite the map $f$ as
    \begin{equation*}
        f(a) = \int v(y') \psi(H^{-1}(y, a, y')) |J(H^{-1})(y, a, y')| \diff y'.
    \end{equation*}
    Let $g(a, y') \coloneq \psi(H^{-1}(y, a, y')) |J(H^{-1})(y, a, y')|$, 
    the map $a \mapsto g(a, y')$ is continuous by assumption. 
    So for any $a_n \to a \in \Asf$, $g(a_n, y') \to g(a, y')$.
    Note that $g(a_n, y')$ and $g(a, y')$ are the densities for $Y = H(y, a_n, Z)$
    and $Y = H(y, a, Z)$, so $\int g(a_n, y') \diff y' = \int g(a, y') \diff y' = 1$.
    By Scheffe's lemma,
    \begin{equation*}
        \int |g(a_n, y') - g(a, y')| \diff y' \to 0.
    \end{equation*}
    Note that 
    \begin{align*}
        |f(a_n) - f(a)|
        & \leq \int |v(y')| \cdot |g(a_n, y') - g(a, y')| \diff y' \\
        & \leq M \int |g(a_n, y') - g(a, y')| \diff y'.
    \end{align*}
    Hence, $f(a_n) \to f(a)$.
\end{example}

\subsection{Risk-Sensitive Preferences}\label{ss:recpref}

This section provides an application of the concave ADP result in
Theorem~\ref{t:riesz_con}, which is based on the fixed point result from
\cite{marinacci2019unique}.  In the application we suppose that lifetime utility
$V_t$ is governed by risk-sensitive preferences \citep{bauerle_markov_2022,
bauerle_stochastic_2018, bauerle_markov_2024}, so that lifetime value is given
recursively by 
\begin{equation*}
    V_t = R_t + \frac{\beta_t}{\theta} \ln \left\{
        E_t e^{(\theta V_{t+1})}
    \right\}.
\end{equation*}
Here $(R_t)$ is a stochastic reward process and $\theta$ is a nonzero constant in $\RR$.  
Decreasing $\theta$ lowers appetite for risk, 
and increasing $\theta$ does the opposite. 
Here we focus on the standard case $\theta < 0$.  
Future rewards are discounted via a state-dependent discount factor 
$\beta \colon \Zsf \to (0,\infty)$. (For a discussion of the value and significance of
state-dependent discounting, see, e.g., \cite{stachurski2021dynamic}.)  
The Bellman equation corresponding to this Markov decision process is
\begin{equation}\label{eq:bers_infinite}
    v(x) \coloneq 
    \max_{a \in \Gamma(x)} \left\{
        r(x, a) + \frac{\beta(z)}{\theta} \ln \left [
        \int e^{\theta v(H(y, a, z'), z')} P(z, \diff z')
    \right ]
    \right\} \text{ for all } x \in \Xsf,
\end{equation}
where $y$ and $z$ are determined by $x = (y, z)$.

We impose some stability conditions.

\begin{assumption}\label{a:concave}
    The following conditions hold:
    \begin{enumerate}
        \item the reward function $r$ satisfies
            $0 < \underline{r} \leq r \leq \bar r$,
        \item the map $\bar H(y, z') \coloneq \sup_{a \in \Asf} H(y, a, z')$ 
            mapping from $\Xsf$ to $ \Ysf$ is well-defined and measurable, and 
            for any given $z' \in \Zsf$, it is order-preserving on $\Ysf$,
        \item the discount factor $\beta$ is bounded and Borel measurable, and
        \item there exists a $n \in \NN$ such that
            \begin{equation*}
                \sup_{z \in \Zsf} \EE^{P}_z \prod_{t=0}^{n-1} \beta(Z_t) < 1.
            \end{equation*}
    \end{enumerate}
\end{assumption}

In (iv), $\EE^{P}$ is conditional expectation with respect to the stochastic
kernel $P$.

To solve \eqref{eq:bers_infinite}, we construct an ADP in the Riesz space $bm\Xsf$, 
the bounded Borel measurable functions on $\Xsf$ with the pointwise partial 
order $\leq$. Let $S$ be an operator defined at 
$v \in (bm\Xsf)_+$ by
\begin{equation*}
    (Sv)(x) = \bar r + \beta(z) \int v(\bar H(y, z'), z') P(z, \diff z')
    \text{ for all } x = (y, z) \in \Xsf.
\end{equation*}
By (iv) of Assumption~\ref{a:concave} and Theorem~2.1 of \cite{stachurski2021dynamic}, 
$S$ is eventually contracting and thus globally stable on $(bm\Xsf)_+$.  
Let $b \in (bm\Xsf)_+$ be the unique fixed point of $S$. 
Clearly, $0 < \bar r \leq b$, and 
by (ii) of Assumption~\ref{a:concave} and Theorem~2.2 of \cite{stachurski2021dynamic}, 
$b$ is increasing in $\Ysf$.  
Let $V_b \coloneq [0, b] \subset (bm\Xsf)_+$ be the value space. 
A feasible policy is a Borel measurable map $\sigma \colon \Xsf \to \Asf$
such that $\sigma(x) \in \Gamma(x)$ for all $x \in \Xsf$.
Let $\Sigma$ be the set of all feasible policies.
To each policy $\sigma \in \Sigma$, we assign a policy operator $T_\sigma$ 
defined at $v \in V_b$ by
\begin{equation*}
    (T_\sigma v)(x) =
    r(x, \sigma(x)) + \frac{\beta(z)}{\theta} \ln \left [
        \int e^{\theta v(H(y, \sigma(x), z'), z')} P(z, \diff z')
    \right ].
\end{equation*}
Let $\TT_{RS} = \left\{ T_\sigma \, : \, \sigma \in \Sigma \right\}$.

\begin{lemma}\label{l:adp}
    If Assumption~\ref{a:concave} holds, then $(V_b, \TT_{RS})$ is an ADP.
\end{lemma}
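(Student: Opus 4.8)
The plan is to verify the two defining conditions of an ADP in Section~\ref{ss:adp}: that $V_b = [0,b]$ is a poset (immediate, being an order interval in the Riesz space $bm\Xsf$), and that each $T_\sigma \in \TT_{RS}$ is a well-defined order-preserving self-map on $V_b$. The content is in the latter claim, which splits naturally into three sub-steps: (a) each $T_\sigma$ maps $V_b$ into $bm\Xsf$ (measurability and boundedness); (b) each $T_\sigma$ is order preserving; and (c) each $T_\sigma$ maps $V_b$ into $V_b$, i.e.\ $0 \leq T_\sigma v \leq b$ whenever $0 \leq v \leq b$.

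For (a), measurability of $(T_\sigma v)(x)$ in $x$ follows from measurability of $\sigma$, $r$, $\beta$, $H$, and the kernel $P$, together with the fact that $v$ is Borel measurable and bounded (so the inner integral is finite and measurable in $x$, and $\ln$ of a positive measurable function is measurable); boundedness follows once (c) is established. For (b), fix $v \leq w$ in $V_b$. Since $\theta < 0$, the map $t \mapsto e^{\theta t}$ is decreasing, so $e^{\theta v(\cdot)} \geq e^{\theta w(\cdot)}$ pointwise; integrating against the probability kernel $P(z,\cdot)$ preserves this, and then applying $\ln$ (increasing) and multiplying by $\beta(z)/\theta$ — where $\beta(z) > 0$ and $\theta < 0$, so the factor is negative, flipping the inequality back — gives $\frac{\beta(z)}{\theta}\ln[\int e^{\theta v}\,P] \leq \frac{\beta(z)}{\theta}\ln[\int e^{\theta w}\,P]$. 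Adding $r(x,\sigma(x))$ preserves the inequality, so $T_\sigma v \leq T_\sigma w$. Care must be taken to state the two sign reversals cleanly, since this is the one place where $\theta < 0$ genuinely matters.

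For (c), the lower bound is easy: $v \geq 0$ gives $e^{\theta v} \leq 1$ (as $\theta < 0$), so $\int e^{\theta v}\,P \leq 1$, hence $\ln[\cdot] \leq 0$, and multiplying by the negative number $\beta(z)/\theta$ gives $\frac{\beta(z)}{\theta}\ln[\cdot] \geq 0$; together with $r \geq \underline r > 0$ this yields $T_\sigma v \geq 0$. For the upper bound — the crux of the lemma — I would use the comparison operator $S$ and its fixed point $b$. First bound $T_\sigma v$ above by a cruder object: using $r(x,\sigma(x)) \leq \bar r$ and, for the certainty-equivalent term, the elementary inequality $\frac{1}{\theta}\ln \int e^{\theta v}\,P(z,\diff z') \leq \int v\,P(z,\diff z')$ (valid for $\theta < 0$; it is Jensen's inequality applied to the concave function $\frac{1}{\theta}\ln$ composed with $\exp$, or equivalently the fact that the entropic certainty equivalent is dominated by the mean when $\theta<0$). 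Then monotonicity of $H$ in the action, via $\bar H(y,z') = \sup_{a}H(y,a,z')$ from Assumption~\ref{a:concave}(ii), plus the fact that $v \leq b$ and $b$ is increasing in $\Ysf$, gives
\begin{equation*}
    \int v(H(y,\sigma(x),z'),z')\,P(z,\diff z') \leq \int b(\bar H(y,z'),z')\,P(z,\diff z').
\end{equation*}
Multiplying by $\beta(z) > 0$ and adding $\bar r$ recovers exactly $(Sb)(x)$, so $T_\sigma v \leq S b = b$, completing (c). The main obstacle is assembling (c) correctly: one must chain the entropic-to-linear bound, the $v \leq b$ bound, and the $H \leq \bar H$ bound composed with monotonicity of $b$ in $\Ysf$, in the right order, and confirm that the result collapses precisely to the definition of $S$ at its fixed point $b$. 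With (a)–(c) in hand, $(V_b, \TT_{RS})$ satisfies both conditions in Section~\ref{ss:adp} and is therefore an ADP.
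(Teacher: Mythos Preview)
Your proposal is correct and follows essentially the same route as the paper's proof: order preservation via the two sign reversals from $\theta<0$, nonnegativity from $e^{\theta v}\leq 1$, and the upper bound via Jensen's inequality (the entropic certainty equivalent is dominated by the mean) followed by the passage from $H$ to $\bar H$ using that $b$ is increasing in $\Ysf$, collapsing to $Sb=b$. The only cosmetic difference is that the paper introduces an intermediate linear operator $S_\sigma$ and writes the upper-bound chain as $T_\sigma v \leq S_\sigma v \leq S_\sigma b \leq Sb = b$, whereas you fold the $r\leq \bar r$ step in earlier; the substance is identical.
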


\begin{proof}
    To see that $T_\sigma$ is order preserving, fix $\sigma \in \Sigma$, $x
    \in \Xsf$ and $v \leq w$ in $V_b$.  Since $\theta < 0$ we have $e^{\theta v}
    \geq e^{\theta w}$ pointwise on $\Xsf$, so, for any $x = (y, z) \in \Xsf$,
    letting $x' = (H(y, \sigma(x), z'), z')$, we have
    \begin{equation*}
        \ln \left [
        \int e^{\theta v(x')} P(z, \diff z')
            \right ] 
        \geq
        \ln \left [
        \int e^{\theta w(x')} P(z, \diff z')
            \right ].
    \end{equation*}
    \begin{equation*}
        \fore
        \frac{\beta(z)}{\theta} \ln \left [
        \int e^{\theta v(x')} P(z, \diff z')
            \right ] 
            \leq
        \frac{\beta(z)}{\theta} \ln \left [
        \int e^{\theta w(x')} P(z, \diff z')
            \right ].
    \end{equation*}
    \begin{equation*}
       \fore T_\sigma \, v \leq T_\sigma \, w. 
    \end{equation*}
    Regarding $T_\sigma V_b \subseteq V_b$, fix $\sigma \in \Sigma$ and $v \in V_b$.  
    The arithmetic and integral operations in the definition of $T_\sigma$ all
    preserve measurability, so $T_\sigma \, v$ is Borel measurable.  In
    addition, $T_\sigma \, v$ is bounded and nonnegative when $v \in V_b$, because
    for such a $v$ there is a constant $m$ such that $0  \leq v \leq m$ on
    $\Xsf$.  Hence, for any $x \in \Xsf$,
    \begin{equation*}
        0 \leq r(x, \sigma(x)) 
        \leq (T_\sigma v)(x)
        \leq r(x, \sigma(x)) + \beta(x) m
        \leq \sup r + m \sup \beta .
    \end{equation*}
    To see $T_\sigma v \leq b$, 
    let an order preserving operator $S_\sigma$ defined at each $v \in V_b$ by
    \begin{equation*}
        (S_\sigma v)(x) 
        = r(x, \sigma(x)) 
        + \beta(z) \int v(H(y, \sigma(x), z'), z') P(z, \diff z')
        \text{ for all } x \in \Xsf.
    \end{equation*}
    Consider the random variable $\eta \coloneq v(X')$,
    where $X' = (H(y, \sigma(x), Z'), Z')$ is a draw from $P(z, \cdot)$.
    Let $f(t) \coloneq \exp{\theta t}$.  The function $f$ is convex because 
    $f''(t) = \theta^2 \exp{\theta t} > 0$.  Hence, by Jensen's inequality, we
    have $\EE f(\eta) \geq
    f(\EE(\eta))$.  With $\theta < 0$, $f^{-1}(t) = \frac{1}{\theta} \ln{t}$ is
    decreasing, so $f^{-1} (\EE f(\eta)) \leq \EE (\eta)$.  Therefore,
    letting $x' = (H(y, \sigma(x), z'), z')$, we have
    \begin{align*}
        \frac{\beta(z)}{\theta} \ln \left\{ \int \exp{[\theta v(x')]} 
        P(z, \diff z') \right\}
        \leq \beta(z) \int v(x') P(z, \diff z').
    \end{align*}
    This yields $(T_\sigma v)(x) \leq (S_\sigma v)(x)$ 
    for all $v \in V_b$ and $x \in \Xsf$.
    Since $b$ is increasing in $\Ysf$, $S_\sigma b \leq S b$.
    Hence, $T_\sigma v \leq S_\sigma v \leq S_\sigma b \leq S b = b$.
\end{proof}

\begin{lemma}\label{l:regular}
    If Assumption~\ref{a:regular} holds, then $(V_b, \TT_{RS})$ is regular.
\end{lemma}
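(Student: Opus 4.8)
The plan is to produce, for an arbitrary $v \in V_b$, a $v$-greedy policy via a measurable selection argument. Fix $v \in V_b = [0, b]$ and define $g \colon \Gsf \to \RR$ by
\begin{equation*}
    g(x, a) \coloneq r(x, a) + \frac{\beta(z)}{\theta} \ln \left[ \int e^{\theta v(H(y, a, z'), z')} P(z, \diff z') \right],
    \qquad x = (y, z).
\end{equation*}
Then $(T_\sigma v)(x) = g(x, \sigma(x))$ for every $\sigma \in \Sigma$ and $x \in \Xsf$, so a policy $\sigma$ is $v$-greedy exactly when $\sigma(x)$ maximizes $a \mapsto g(x, a)$ over $\Gamma(x)$ for each $x$. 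Hence it suffices to exhibit a Borel measurable selector $\sigma$ with $\sigma(x) \in \argmax_{a \in \Gamma(x)} g(x, a)$ for all $x$.

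First I would verify that $g$ is a Carath\'eodory function on $\Gsf$. For continuity in $a$: since $0 \leq v \leq b$, the function $b$ is bounded, and $\theta < 0$, the function $e^{\theta v}$ belongs to $bm\Xsf$ and is bounded below by a strictly positive constant; applying Assumption~\ref{a:regular}(iii) with $e^{\theta v}$ in place of $v$ shows that $a \mapsto \int e^{\theta v(H(y, a, z'), z')} P(z, \diff z')$ is continuous, and composing with the continuous map $t \mapsto (\beta(z)/\theta) \ln t$ on $(0, \infty)$ and adding $r(x, \cdot)$ (continuous by Assumption~\ref{a:regular}(ii)) gives continuity of $a \mapsto g(x, a)$ on $\Gamma(x)$. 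Measurability of $x \mapsto g(x, a)$ follows from measurability of $r$, $\beta$, $H$ and $v$ together with the fact that integration against the stochastic kernel $P$ preserves measurability.

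Next I would invoke the measurable maximum theorem. By Assumption~\ref{a:regular}(i), $\Gamma$ is a continuous correspondence from $\Xsf$ into the separable metrizable space $\Asf$ with nonempty compact values, hence measurable; combined with the Carath\'eodory property of $g$, the correspondence $x \mapsto \argmax_{a \in \Gamma(x)} g(x, a)$ has nonempty values (by compactness of $\Gamma(x)$ and continuity of $g(x, \cdot)$) and admits a Borel measurable selector $\sigma$. Such a $\sigma$ is a feasible policy, so $\sigma \in \Sigma$, and for any $\tau \in \Sigma$ and any $x \in \Xsf$ we have $\tau(x) \in \Gamma(x)$ and therefore $(T_\tau v)(x) = g(x, \tau(x)) \leq g(x, \sigma(x)) = (T_\sigma v)(x)$, i.e., $T_\tau v \leq T_\sigma v$. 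Thus $\sigma$ is $v$-greedy, and since $v \in V_b$ was arbitrary, $(V_b, \TT_{RS})$ is regular.

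I expect the measurable selection step to be the main obstacle: one must confirm the joint (Carath\'eodory) measurability of $g$ — tracking the composition with $H$, the exponential, and the integral against $P$ — and take care that Assumption~\ref{a:regular}(iii) is being applied to $e^{\theta v}$, not to $v$ itself. The remaining ingredients are routine, since positivity, boundedness, and measurability of $T_\sigma v$ on $V_b$ were already established in Lemma~\ref{l:adp}, and compactness of $\Gamma(x)$ guarantees attainment of the maximum.
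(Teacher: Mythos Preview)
Your proposal is correct and follows essentially the same route as the paper: define the objective $g(x,a)$, verify it is Carath\'eodory (continuous in $a$, measurable in $x$) using Assumption~\ref{a:regular}(ii)--(iii), and then apply the measurable maximum theorem together with Assumption~\ref{a:regular}(i) to extract a measurable maximizing selector. If anything, you are more careful than the paper in flagging that Assumption~\ref{a:regular}(iii) must be applied to $e^{\theta v} \in bm\Xsf$ rather than to $v$ itself, and in noting the strict positivity needed for the logarithm.
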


\begin{proof}
    Fix $v \in V_b$.  By (ii) and (iii) of Assumption~\ref{a:regular}, the map
    \begin{equation*}
        (x, a) \mapsto r(x, a) + \frac{\beta(z)}{\theta} \ln \left [
        \int e^{\theta v(H(y, a, z'), z')} P(z, \diff z')
    \right ]
    \end{equation*}
    is measurable on $\Xsf$ and continuous on $\Asf$, so, by (i) of
    Assumption~\ref{a:regular} and the measurable
    maximum theorem (see 
    Theorem~18.19 of \cite{aliprantis2006infinite}), there exists a
    measurable function $\sigma \colon \Xsf \to \Asf$ such that
    \begin{equation}
        \sigma(x) \in \argmax_{a \in \Gamma(x)} \left\{
        r(x, a) + \frac{\beta(z)}{\theta} \ln \left [
        \int e^{\theta v(H(y, a, z'), z')} P(z, \diff z')
        \right ]
        \right\} \text{ for all } x \in \Xsf.
    \end{equation}
    For this $\sigma$ and any other $\tau \in \Sigma$, we have $(T_\tau v)(x)
    \leq (T_\sigma v)(x)$ for all $x \in \Xsf$. In particular, $\sigma$ is $v$-greedy.
\end{proof}

\begin{lemma}\label{l:continuous}
    Each $T_\sigma \in \TT_{RS}$ is order continuous.
\end{lemma}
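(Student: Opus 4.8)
The strategy is to reduce order continuity of $T_\sigma$ to a pointwise monotone convergence statement in $\RR$ and then invoke a dominated/monotone convergence argument, taking care with the sign of $\theta$.

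First I would fix $\sigma \in \Sigma$ and a sequence $(v_n) \subset V_b$ with $v_n \uparrow v \in V_b$. By Lemma~\ref{l:adp}, $T_\sigma$ is order preserving and $T_\sigma V_b \subseteq V_b$, so $(T_\sigma v_n)$ is increasing in $V_b$ and bounded above by $T_\sigma v \in V_b$. Since $V_b = [0,b] \subset bm\Xsf$ is closed under pointwise suprema of sequences (the pointwise supremum of an increasing sequence in $[0,b]$ is measurable and lies in $[0,b]$, using that $b$ is bounded), Lemma~\ref{l:bmx} shows it suffices to prove the pointwise statement $(T_\sigma v_n)(x) \uparrow (T_\sigma v)(x)$ in $\RR$ for every $x \in \Xsf$; this gives $\bigvee_n T_\sigma v_n = T_\sigma v$ in $V_b$, which is exactly order continuity.

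Next I would fix $x = (y,z) \in \Xsf$ and set $g_n(z') \coloneq \exp[\theta\, v_n(H(y,\sigma(x),z'),z')]$ and $g(z') \coloneq \exp[\theta\, v(H(y,\sigma(x),z'),z')]$. Applying Lemma~\ref{l:bmx} to $v_n \uparrow v$ gives $v_n(x') \uparrow v(x')$ pointwise; since $\theta < 0$ this yields $\theta v_n(x') \downarrow \theta v(x')$ and hence $g_n \downarrow g$ pointwise. Writing $M \coloneq \sup_{\Xsf} b < \infty$, we have $0 < e^{\theta M} \leq g_n \leq 1$ for all $n$, so $(g_n)$ is dominated by the constant $1$, which is integrable under the probability measure $P(z,\cdot)$. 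Dominated (or monotone) convergence then gives $\int g_n(z')\,P(z,\diff z') \downarrow \int g(z')\,P(z,\diff z')$, and both integrals lie in $[e^{\theta M}, 1]$, hence are bounded away from $0$.

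Finally, since $t \mapsto \ln t$ is continuous and increasing on $[e^{\theta M},1]$, we get $\ln \int g_n\,P(z,\diff z') \downarrow \ln \int g\,P(z,\diff z')$; multiplying by $\beta(z)/\theta$, which is strictly negative since $\beta(z) > 0$ and $\theta < 0$, reverses the monotonicity, and adding the $n$-independent term $r(x,\sigma(x))$ yields $(T_\sigma v_n)(x) \uparrow (T_\sigma v)(x)$, as needed. I do not anticipate a genuine obstacle: the only delicate point is the sign bookkeeping around $\theta < 0$, which reverses monotonicity twice (once through $\exp$ and once through the factor $\beta(z)/\theta$), together with the use of boundedness of $b$ to keep the integrand uniformly in $[e^{\theta M},1]$, so that $\ln$ is evaluated only on a compact subinterval of $(0,\infty)$ and the convergence theorems apply cleanly.
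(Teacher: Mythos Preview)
Your proof is correct and follows essentially the same route as the paper's own proof: reduce order continuity to pointwise monotone convergence via Lemma~\ref{l:bmx}, apply dominated convergence to the exponential integrand (using $\theta<0$ to get $g_n\downarrow g$), use continuity of $\ln$, and convert back with Lemma~\ref{l:bmx}. Your version is in fact more explicit than the paper's about the sign bookkeeping and the uniform bounds $e^{\theta M}\leq g_n\leq 1$ that keep the integrals away from~$0$.
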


\begin{proof}
    Fix $\sigma \in \Sigma$ and $v_n \uparrow v \in V_b$. By
    Lemma~\ref{l:bmx}, we have $v_n(x') \uparrow v(x')$ for all
    $x' \in \Xsf$, so $e^{\theta v_n(x')} \downarrow e^{\theta v(x')}$.
    Fixing $x \in \Xsf$ and applying the dominated convergence theorem, we
    obtain
    \begin{equation*}
        \int e^{\theta v_n(x')} P(z, \diff z') \downarrow 
        \int e^{\theta v(x')} P(z, \diff z'),
    \end{equation*}
    where $x' = (H(y, \sigma(x), z'), z')$.
    Since the log function is continuous, this yields $(T_\sigma v_n)(x)
    \uparrow (T_\sigma v)(x)$.   Applying Lemma~\ref{l:bmx} again
    yields $T_\sigma v_n \uparrow T_\sigma v$, so $T_\sigma$ is order continuous.
\end{proof}

\begin{lemma}\label{l:concave}
    Every $T_\sigma \in \TT_{RS}$ is 
    concave on $V_b$.
\end{lemma}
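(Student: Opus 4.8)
The plan is to reduce the operator-level concavity to a pointwise, real-valued statement and then invoke Hölder's inequality. First I would fix $\sigma \in \Sigma$, take $v, w \in V_b$ and $\lambda \in [0,1]$; since $V_b = [0,b]$ is an order interval it is convex, so $v_\lambda \coloneq \lambda v + (1-\lambda) w$ lies in $V_b$ and $T_\sigma v_\lambda$ is well-defined. Because $bm\Xsf$ carries the pointwise order, it suffices to check that for each $x = (y,z) \in \Xsf$,
\[
(T_\sigma v_\lambda)(x) \ge \lambda (T_\sigma v)(x) + (1-\lambda)(T_\sigma w)(x).
\]
The reward term $r(x,\sigma(x))$ is common to both sides and cancels, so the claim reduces to an inequality involving only the entropic certainty-equivalent term.

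Next I would isolate, for fixed $x$, the functional $\Phi_x(u) \coloneq \frac{\beta(z)}{\theta}\ln \int e^{\theta u(x')}P(z,\diff z')$, where $x' = (H(y,\sigma(x),z'),z')$, and show it is concave in $u$ on $V_b$. Since $u$ ranges over bounded functions, each integrand $e^{\theta u(x')}$ is bounded and bounded away from $0$, so all integrals are finite and strictly positive and the logarithm is legitimate. Writing $e^{\theta v_\lambda(x')} = (e^{\theta v(x')})^{\lambda} (e^{\theta w(x')})^{1-\lambda}$ and applying Hölder's inequality with conjugate exponents $1/\lambda$ and $1/(1-\lambda)$ (the cases $\lambda \in \{0,1\}$ being trivial) gives
\[
\int e^{\theta v_\lambda(x')}P(z,\diff z') \le \left(\int e^{\theta v(x')}P(z,\diff z')\right)^{\lambda}\left(\int e^{\theta w(x')}P(z,\diff z')\right)^{1-\lambda}.
\]
Taking logarithms turns this into the statement that $u \mapsto \ln\int e^{\theta u}\,\mathrm{d}P$ is convex.

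Finally I would assemble the pieces: multiplying through by $\beta(z)/\theta$, which is strictly negative since $\beta(z) > 0$ and $\theta < 0$, reverses the inequality and yields exactly $\Phi_x(v_\lambda) \ge \lambda \Phi_x(v) + (1-\lambda)\Phi_x(w)$. Adding back $r(x,\sigma(x))$ to both sides and recalling that $x$ was arbitrary gives the desired pointwise inequality, hence $T_\sigma v_\lambda \ge \lambda T_\sigma v + (1-\lambda) T_\sigma w$ in $bm\Xsf$, i.e. $T_\sigma$ is concave on $V_b$. There is no serious obstacle here; the only points demanding care are the sign reversal coming from $\theta < 0$ — this is precisely where the standing assumption $\theta < 0$ enters, mirroring its role in Lemma~\ref{l:adp} — and the routine check that the integrals are finite and positive so that every manipulation is valid.
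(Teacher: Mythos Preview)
Your proof is correct and follows essentially the same route as the paper: both arguments reduce the claim to a pointwise inequality for the entropic certainty-equivalent, write $e^{\theta(\lambda v + (1-\lambda)w)} = (e^{\theta v})^\lambda (e^{\theta w})^{1-\lambda}$, apply H\"older's inequality with conjugate exponents $1/\lambda$ and $1/(1-\lambda)$, and then use that $t \mapsto \frac{1}{\theta}\ln t$ is decreasing when $\theta < 0$ to flip the inequality. Your write-up is slightly more careful in recording that $V_b$ is convex and that the integrals are finite and strictly positive, but the core argument is identical.
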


\begin{proof}
    Fix $\sigma \in \Sigma$. $T_\sigma$ is concave since for all $v, w \in V$ and 
    $0 < \lambda < 1$, let $p = \frac{1}{\lambda} > 1$ and $q = \frac{1}{1-\lambda} > 1$,
    we have
    \begin{align*}
        & \lambda \frac{1}{\theta} \ln{\EE [\exp{(\theta v)}]} + (1 - \lambda) 
        \frac{1}{\theta}
        \ln{\EE [\exp{(\theta w)}]} \\
        =& \frac{1}{\theta} \ln{\left\{(\EE [\exp{(\theta v)}])^{\lambda} 
        (\EE [\exp{(\theta w)}])^{1- \lambda}\right\}} \\
        =& \frac{1}{\theta} \ln{\left\{(\EE [\exp{(p \lambda \theta v)}])^{\frac{1}{p}}
        (\EE [\exp{(q (1 - \lambda) \theta w)}])^{\frac{1}{q}}\right\}} \\
        \leq & \frac{1}{\theta} \ln{\left\{\EE[\exp{(\lambda \theta v)} \cdot 
        \exp{(1 - \lambda) \theta w)}]\right\}}\\
        =& \frac{1}{\theta} \ln{\left\{\EE[\exp{(\lambda \theta v + (1 - \lambda) 
        \theta w)}]\right\}}.
    \end{align*}
    The inequality uses the fact that the map $w \mapsto \frac{1}{\theta} \ln w$
    is decreasing, as well as Holder's inequality. Hence, for 
    any $x \in \Xsf$, we have
    \begin{equation*}
        (\lambda T_\sigma v)(x) + ((1 - \lambda) T_\sigma w)(x)
        \leq (T_\sigma (\lambda v + (1 - \lambda) w))(x).
        \qedhere
    \end{equation*}
\end{proof}

\begin{proposition}\label{p:rs}
    If Assumption~\ref{a:regular} and Assumption~\ref{a:concave} hold, 
    then the fundamental optimality properties hold for $(V_b, \TT_{RS})$, 
    and VFI, OPI and HPI all converge in $(V_b, \TT_{RS})$.
\end{proposition}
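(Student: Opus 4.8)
The plan is to apply the concave ADP result, Theorem~\ref{t:riesz_con}, directly to the pair $(V_b, \TT_{RS})$. Here the ambient space is $E = bm\Xsf$, which is a countably Dedekind complete Riesz space under the pointwise order, and the value space $V_b = [0, b]$ is the order interval determined by the fixed point $b \in (bm\Xsf)_+$ of $S$. Since $b = Sb \geq \bar r$ and $\bar r > 0$ by Assumption~\ref{a:concave}(i), we have $0 < b$, so $V_b$ has exactly the form $[0, b]$ required by Theorem~\ref{t:riesz_con}.

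First I would collect the structural facts already established: by Lemma~\ref{l:adp}, $(V_b, \TT_{RS})$ is an ADP; by Lemma~\ref{l:regular} (which is where Assumption~\ref{a:regular} enters, through the measurable maximum theorem), it is regular; by Lemma~\ref{l:continuous}, it is order continuous; and by Lemma~\ref{l:concave}, each $T_\sigma$ is concave on $V_b$. The one remaining hypothesis of Theorem~\ref{t:riesz_con} is that $T_\sigma v \neq v$ for every $v \in \partial V_b$. Since $\inf_{x \in \Xsf} b(x) \geq \bar r > 0$, the example in Section~\ref{s:fprs} identifies $\partial V_b$ with the set of $f \in V_b$ having $\inf_{x \in \Xsf} f(x) = 0$. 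But Assumption~\ref{a:concave}(i) gives $r \geq \underline r > 0$, so $(T_\sigma v)(x) \geq r(x, \sigma(x)) \geq \underline r$ for every $x \in \Xsf$, every $\sigma \in \Sigma$, and every $v \in V_b$; hence $\inf_x (T_\sigma v)(x) \geq \underline r > 0$, so $T_\sigma v \notin \partial V_b$ and in particular $T_\sigma v \neq v$. (Alternatively, one can sidestep the example: if $\inf_x v(x) \geq \underline r$, then $(\underline r / \sup_x b(x))\, b \leq v$ pointwise, exhibiting a strictly positive $\alpha$ with $\alpha b \leq v$, so again $v \notin \partial V_b$.)

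With all the hypotheses of Theorem~\ref{t:riesz_con} verified, its two conclusions give immediately that the fundamental optimality properties hold for $(V_b, \TT_{RS})$ and that VFI, OPI and HPI all converge. The proposition itself is thus essentially bookkeeping once the supporting lemmas are in place; the genuine obstacles sit inside those lemmas. The most delicate is the inclusion $T_\sigma V_b \subseteq V_b$ in Lemma~\ref{l:adp}, whose upper bound $T_\sigma v \leq b$ rests on Jensen's inequality (used to dominate the entropic certainty equivalent by the corresponding linear expectation), on the monotonicity of $\bar H$ in $\Ysf$, and on the chain $T_\sigma v \leq S_\sigma v \leq S_\sigma b \leq S b = b$; the other is the concavity of $T_\sigma$ in Lemma~\ref{l:concave}, which leans on Hölder's inequality together with the orientation reversal induced by $\theta < 0$.
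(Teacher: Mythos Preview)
Your proposal is correct and follows essentially the same route as the paper: apply Theorem~\ref{t:riesz_con} after invoking Lemmas~\ref{l:adp}--\ref{l:concave}, and dispatch the lower-perimeter condition by observing that $T_\sigma v \geq r \geq \underline r > 0$ pointwise, so $T_\sigma v \notin \partial V_b$. The alternative direct verification you offer (exhibiting $\alpha = \underline r / \sup_x b(x)$) and the closing commentary on where the real work lies are extras, but the argumentative skeleton matches the paper's proof.
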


\begin{proof}
    We showed in Lemma~\ref{l:adp} that $(V_b, \TT_{RS})$ is an ADP.  
    In Lemma~\ref{l:regular} and Lemma~\ref{l:continuous}, 
    we showed that $(V_b, \TT_{RS})$ is regular and order continuous.
    We proved in Lemma~\ref{l:concave} that each policy operator is concave.
    Hence, in view of Theorem~\ref{t:riesz_con}, 
    all the claims hold for $(V_b, \TT_{RS})$ if 
    for each $T_\sigma \in \TT_{RS}$,  $T_\sigma v \neq v$ whenever $v \in \partial V_b$.
    Since $b \in V_b$ with $\inf_{x \in \Xsf} b(x) \geq \bar{r} > 0$, 
    we have $\partial V_b$ is all $f \in V$ 
    with $\inf_{x \in \Xsf} f(x) = 0$. 
    But by (i) of Assumption~\ref{a:concave}, for any $v \in V_b$, we have $0 <
    \underline{r} \leq r \leq T_\sigma v$,
    so $\inf_{x \in \Xsf} (T_\sigma v)(x) \geq \underline{r} > 0$. 
    Hence, $T_\sigma v \notin \partial V_b$, and thus $T_\sigma v \neq v$
    for any $v \in \partial V_b$.  
\end{proof}

\subsubsection{A Numerical Example: Firm Valuation with Optional Exit}

In recent years, there has been significant interest in the impact of 
risk aversion on behalf of the managers of firms, 
as opposed to the more standard assumption of risk neutrality. (See, for
example, \cite{graham2013managerial}, \cite{brenner2015risk}, or
\cite{koudstaal2016risk}.) Here we consider a risk-averse firm that can either
(i) choose to exit a market and receive a scrap value or (ii) remain in
operation, receive its current profit, and reassess next period.
Our aim is to apply the theoretical results derived above, while examining the
effect of risk-aversion on optimal management decisions.

The lifetime value of the firm in state $x$ is
\begin{equation*}
    v(x) = \max \left\{ 
    s, \pi(x) + \frac{\beta(x)}{\theta} \ln \left[
    \sum_{x'} e^{\theta v(x')} P(x, x')
    \right]
    \right\},
\end{equation*}
where $s$ represents a fixed scrap value and $\pi(x)$ represents current profit.  The 
state $x$ is exogenous productivity, which evolves on a finite set $\Xsf \subset
\RR_{++}$, while $P$ is a stochastic matrix.

We adopt the discount factor process used in \cite{hills2019effective}.  This
takes the form $\beta_t = \kappa X_t$, where $(X_t)_{t \geq 0}$ evolves
according to $X_{t+1} = 1 - \rho + \rho X_t + \alpha \epsilon_{t+1}$, with
$(\epsilon_t)$ IID and standard normal. Following \cite{hills2019effective}, we
discretize the dynamics of $(X_t)_{t \geq 0}$ via a Tauchen approximation
\citep{tauchen1986finite}, producing the stochastic matrix $P$ on a finite set
$\Xsf = \{x_1, x_2, \cdots, x_n\}$ discussed above. In the computation
described below, we set $\rho = 0.85, \alpha = 0.0062, \kappa = 0.99875, n =
400, s = 100$ and the profit function is $\pi(x) = x$. 

We solve the dynamic program using VFI, HPI, and OPI.
Figure~\ref{fig:vs} shows the output of VFI. In the
left subfigure we plot an approximation of $v^*$ with parameter $\theta = -1$, 
computed as a limit of VFI
iterations, as well as the exit option $s =
100$, and the function $h^*$ given by
$$
    h^*(x) = \pi(x) +
        \frac{\beta(x)}{\theta} 
        \ln \left[\sum_{x'} e^{\theta v^*(x')} P(x, x')\right],
$$
which is the value of remaining in operation.  The value of $x$ at the kink in
$v^*$ is the threshold productivity, or the minimum productivity level
at which the firm chooses to remain in operation.

In the right subfigure, we vary the risk attitude parameter $\theta$ from $-2$ to
$-0.1$ and plot threshold productivity as a function of $\theta$, holding
other parameters fixed.  The threshold productivity
decreases with $\theta$, as the firm becomes progressively less risk-averse.
This is because decreasing risk aversion means that gambles over future profits become more
attractive, relative to the constant scrap value,
which favors continuing over exiting.  Figure~\ref{fig:policy} plots the optimal
policy against the stationary distribution of the productivity
process (scaled up by a factor of 150 to aid visibility).  Parameter values are
the same as the left subfigure in Figure~\ref{fig:vs}.

\begin{figure}[h]
    \centering
    \includegraphics[width=\linewidth]{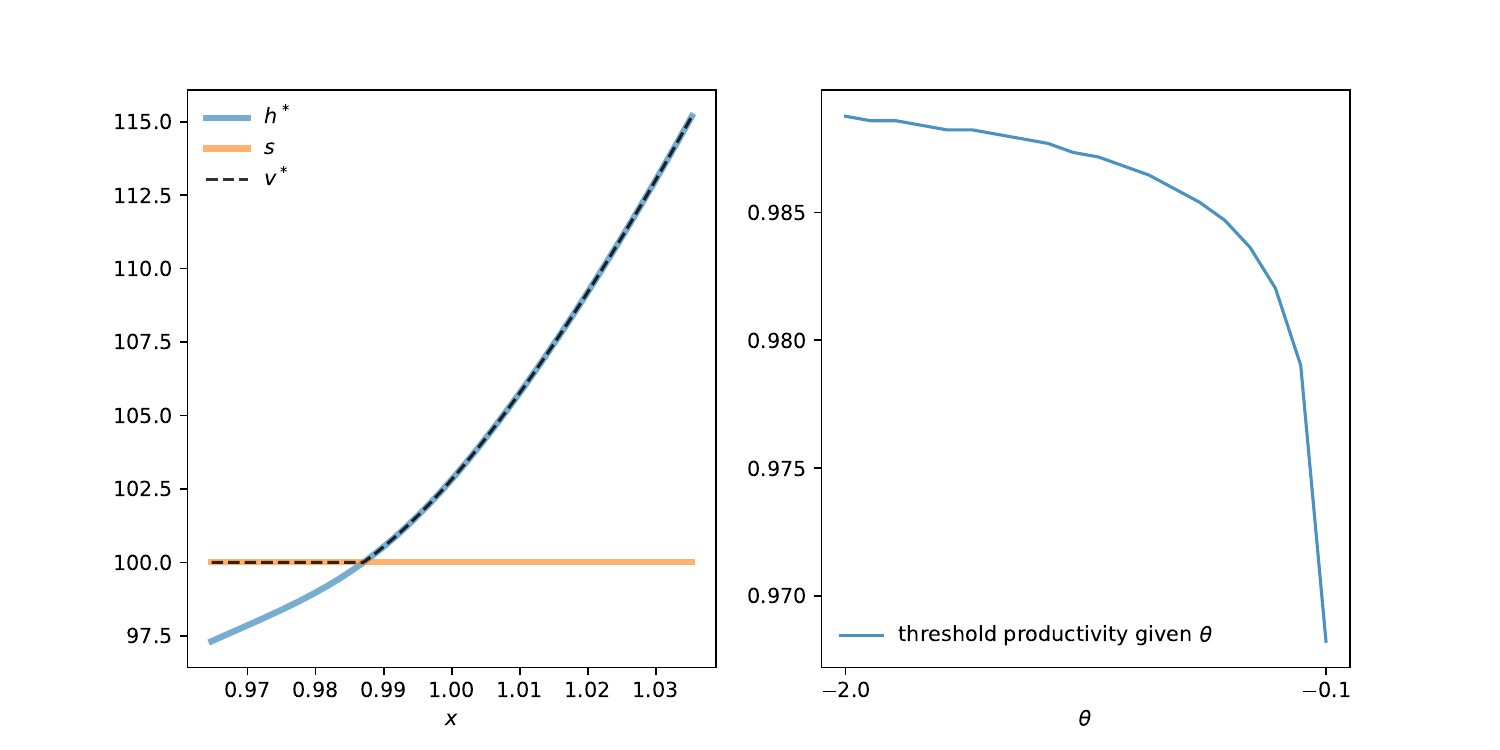}
    \caption{Solution to the firm valuation problem}
    \label{fig:vs}
\end{figure}

\begin{figure}[h]
    \centering
    \includegraphics[width=0.8\linewidth]{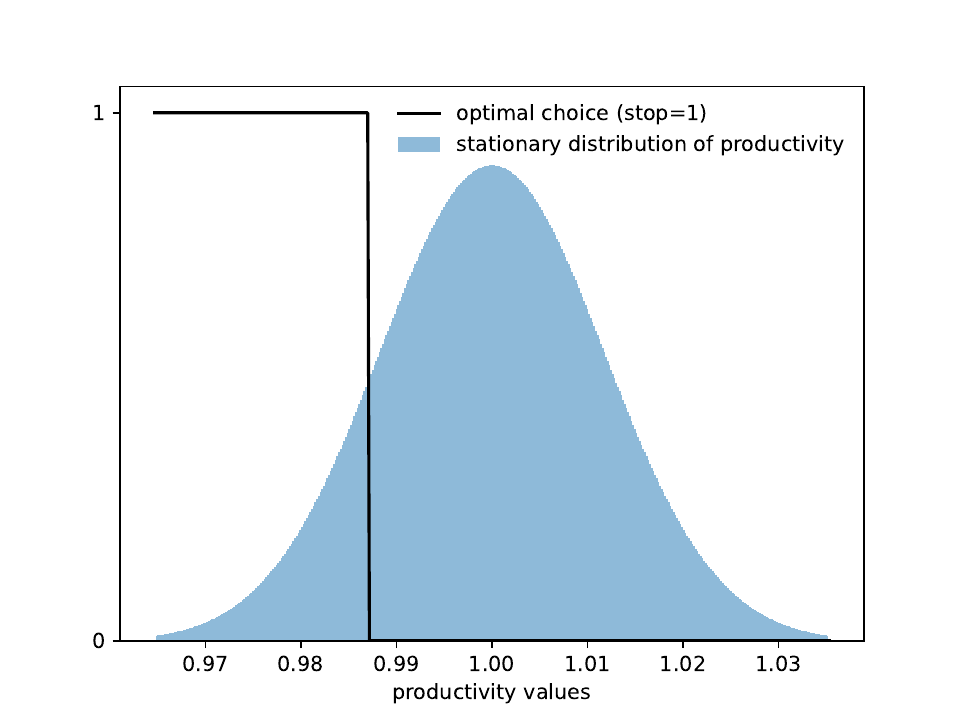}
    \caption{Optimal choice against stationary distribution
    of productivity process}
    \label{fig:policy}
\end{figure}

Figure~\ref{fig:algos} shows the first three iterates of HPI, OPI and VFI, as well as the 
value function $v^*$ and the shared initial condition $v$.  Parameter values are the same
as the left subfigure in Figure~\ref{fig:vs}. In the case of OPI, $m$ is set to 10. We see
that HPI converges faster than VFI in terms of reduced distance to the value
function per iteration. The rate of convergence for OPI is also faster than the
rate for VFI. Figure~\ref{fig:timings} shows execution time for HPI, OPI and VFI
at different choices of $m$. HPI and VFI do not depend on $m$ and hence their
timings are constant.  The figure shows that HPI is faster than VFI in terms of
execution time and that OPI is even faster for some values of $m$.

\begin{figure}[h]
    \centering
    \includegraphics[width=0.8\linewidth]{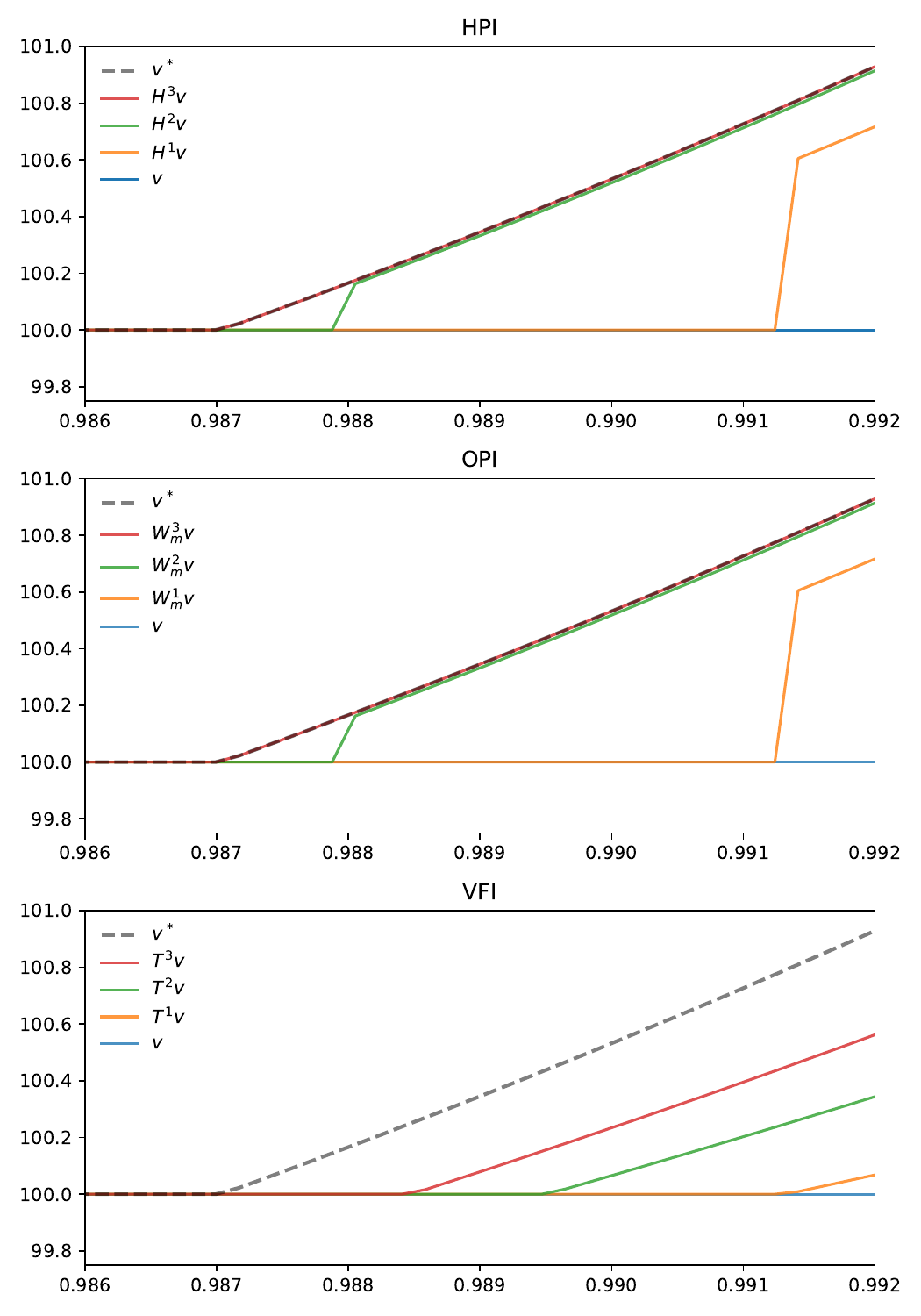}
    \caption{Comparison of algorithms (firm valuation)}
    \label{fig:algos}
\end{figure}

\begin{figure}[h]
    \centering
    \includegraphics[width=\linewidth]{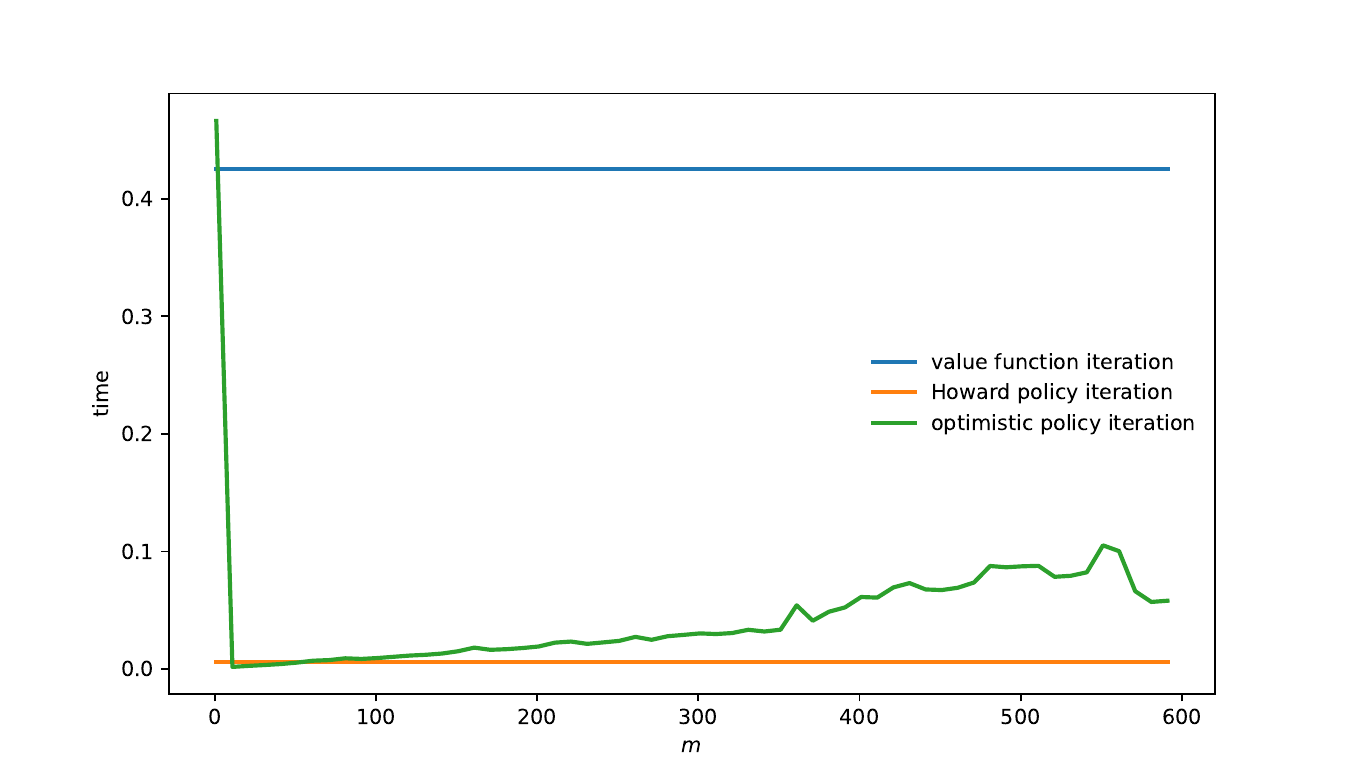}
    \caption{HPI, OPI vs VFI timings for the firm valuation problem}
    \label{fig:timings}
\end{figure}

\subsection{Quantile $Q$-Learning}\label{ss:quantile}

This section gives an application of Theorem~\ref{t:rieszcadp}.
Consider a decision maker with quantile preferences as described 
in \cite{giovannetti2013asset},
\cite{de2019dynamic}, \cite{de2022static} and \cite{TE20250353}. 
Future rewards are discounted at a constant discount factor $\beta \in (0, 1)$.
Let $\Xsf$ and $\Asf$ be finite.
Given $v \in \RR^{\Xsf}$, we set the $Q$-factor corresponding to $v$ as
\begin{equation}\label{eq:q}
    q(x, a) = r(x, a) + \beta \min \left\{
        k \in \RR \colon
        \sum_{z' \in \Zsf} \1 \{ v(H(y, a, z'), z') \leq k\} P(z, z') \geq \tau
    \right\}
\end{equation}
for all $(x, a) \in \Gsf$, where $\tau \in (0, 1)$ parameterizes attitude to risk,
$y$ and $z$ are determined by $x = (y, z)$.

The Bellman equation corresponding to the $Q$-factor quantile decision process
can be written as $v(x) = \max_{a \in \Gamma(x)} q(x, a)$.  Using this
expression in~\eqref{eq:q} yields the \navy{$Q$-factor Bellman equation}
\begin{equation}\label{eq:bmq}
    q(x, a) = r(x, a) + \beta \min \left\{
        k \in \RR \colon
        \sum_{z' \in \Zsf} \1 \{ \max_{a' \in \Gamma(x')} q(x', a') \leq k\}
        P(z, z') \geq \tau
    \right\},
\end{equation}
where $x = (y, z)$ and $x' = (H(y, a, z'), z')$.

To solve~\eqref{eq:bmq}, we introduce an ADP by selecting
$V \coloneq [\underline{r}/(1-\beta), \bar r/(1-\beta)] \subset \RR^{\Gsf}$ as 
the value space where $\underline{r}$ is the infimum of $r$ and $\bar r$ 
is the supremum of $r$.
A feasible policy is a map $\sigma \colon \Xsf \to \Asf$ such that 
$\sigma(x) \in \Gamma(x)$ for all $x \in \Xsf$.
Let $\Sigma$ be the set of all feasible policies.
We set $\TT_{Q} 
\coloneq \{ S_\sigma \colon \sigma \in \Sigma\}$, with each $S_\sigma$
as defined at each $q \in \RR^{\Gsf}$ via
\begin{equation*}
    (S_\sigma q)(x, a) = r(x, a) + \beta \min \left\{
        k \in \RR \colon
        \sum_{z' \in \Zsf} \1 \{ q(x', \sigma(x')) \leq k\} P(z, z') \geq \tau
    \right\}
\end{equation*}
where $x' = (H(y, a, z'), z')$.

\begin{lemma}\label{l:q_radp}
    $(V, \TT_{Q})$ is a regular ADP.
\end{lemma}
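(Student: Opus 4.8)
The plan is to verify directly the three things required of a regular ADP: that $V$ is a poset, that every $S_\sigma$ is an order preserving self-map on $V$, and that a $q$-greedy policy exists at each $q \in V$. The first is immediate, since $V$ is an order interval in $\RR^{\Gsf}$ under the pointwise order, and $\TT_{Q}$ is nonempty because $\Gamma$ is nonempty-valued (so $\Sigma \neq \varnothing$). For the remaining work it is convenient to isolate the quantile functional: given the probability vector $(P(z,z'))_{z'\in\Zsf}$ and a map $w \colon \Zsf \to \RR$, write
\[
    Q_\tau^z(w) \coloneq \min\left\{ k \in \RR : \textstyle\sum_{z'\in\Zsf}\1\{w(z')\leq k\}\,P(z,z')\geq\tau \right\}.
\]
Because $\Zsf$ is finite, $k \mapsto \sum_{z'}\1\{w(z')\leq k\}P(z,z')$ is nondecreasing, right continuous, and equal to $1$ for all large $k$, so the defining set is a nonempty half-line and $Q_\tau^z(w)$ is well defined with $\min_{z'}w(z') \leq Q_\tau^z(w) \leq \max_{z'}w(z')$. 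With this notation $(S_\sigma q)(x,a) = r(x,a) + \beta\, Q_\tau^z\big(w^{q}_{\sigma,x,a}\big)$, where $x=(y,z)$ and $w^{q}_{\sigma,x,a}(z') \coloneq q\big((H(y,a,z'),z'),\,\sigma(H(y,a,z'),z')\big)$, which makes sense since $\sigma(x')\in\Gamma(x')$ implies $(x',\sigma(x'))\in\Gsf$.

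Next I would transfer the self-map and monotonicity properties from $Q_\tau^z$. For the self-map property, fix $\sigma\in\Sigma$ and $q\in V$: then $w^{q}_{\sigma,x,a}$ takes values in $[\underline{r}/(1-\beta),\bar r/(1-\beta)]$, so the bound on $Q_\tau^z$ together with $\underline{r}\leq r(x,a)\leq\bar r$ gives $\underline{r}/(1-\beta) = \underline{r}+\beta\underline{r}/(1-\beta) \leq (S_\sigma q)(x,a) \leq \bar r+\beta\bar r/(1-\beta) = \bar r/(1-\beta)$, i.e.\ $S_\sigma q\in V$. For monotonicity, if $q_1\leq q_2$ in $V$ then $w^{q_1}_{\sigma,x,a}\leq w^{q_2}_{\sigma,x,a}$ pointwise on $\Zsf$, hence $\1\{w^{q_1}_{\sigma,x,a}(z')\leq k\}\geq\1\{w^{q_2}_{\sigma,x,a}(z')\leq k\}$ for every $k,z'$; so the defining set for $Q_\tau^z(w^{q_2}_{\sigma,x,a})$ is contained in that for $Q_\tau^z(w^{q_1}_{\sigma,x,a})$, forcing $Q_\tau^z(w^{q_1}_{\sigma,x,a})\leq Q_\tau^z(w^{q_2}_{\sigma,x,a})$ and therefore $S_\sigma q_1\leq S_\sigma q_2$. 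This shows $(V,\TT_{Q})$ is an ADP.

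For regularity, fix $q\in V$ and pick, for every $x'\in\Xsf$, some $\sigma(x')\in\argmax_{a'\in\Gamma(x')}q(x',a')$; this is possible because each $\Gamma(x')$ is a nonempty finite set, and no measurability issue arises since $\Xsf$ and $\Asf$ are finite. For any competing $\tau\in\Sigma$ and any $(x,a)\in\Gsf$ one has, with $x'=(H(y,a,z'),z')$, that $w^{q}_{\tau,x,a}(z') = q(x',\tau(x')) \leq \max_{a'\in\Gamma(x')}q(x',a') = q(x',\sigma(x')) = w^{q}_{\sigma,x,a}(z')$ for every $z'$; monotonicity of $Q_\tau^z$ then yields $(S_\tau q)(x,a)\leq(S_\sigma q)(x,a)$ for all $(x,a)$, so $S_\tau q\leq S_\sigma q$ and $\sigma$ is $q$-greedy.

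The only mildly delicate point is the monotonicity of the quantile functional — the direction of the indicator inequality under $q_1\leq q_2$ and the resulting containment of the defining sets — combined with the observation that, although $(S_\sigma q)(x,a)$ depends on $\sigma$ only through its values at the successor state-action pairs, a single pointwise-argmax selection $\sigma$ dominates every $\tau\in\Sigma$ at every $(x,a)$ simultaneously. Everything else is routine arithmetic with the interval bounds.
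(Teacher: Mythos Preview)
Your proof is correct and follows essentially the same approach as the paper: the order-preserving argument via the indicator inequality, the self-map argument via the fact that the $\tau$-quantile of a function lies between its minimum and maximum (the paper phrases this as the quantile of a constant being that constant, then invokes monotonicity), and the regularity argument via a pointwise argmax selector are all the same in substance. Your introduction of the quantile functional $Q_\tau^z$ is a clean notational device, but the underlying ideas coincide with the paper's proof.
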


\begin{proof}
    The set $\Sigma$ is nonempty since $\Gamma$ is nonempty.  To see that $S_\sigma$
    is order preserving, fix $\sigma \in \Sigma$ and $q \leq f \in \RR^{\Gsf}$, 
    we have $q(x', a') \leq f(x', a')$ for each $(x', a') \in \Gsf$.  
    Hence, given $k \in \RR$ and $x' \in \Xsf$, the probability of 
    $q(x', \sigma(x')) \leq k$ 
    is not less than $f(x', \sigma(x')) \leq k$, we have
    \begin{equation*}
        \1 \{ q(x', \sigma(x')) \leq k\} \geq \1 \{ f(x', \sigma(x')) \leq k\}.
    \end{equation*}
    Thus, given $\tau \in (0, 1)$ and $k \in \RR$, we have
    \begin{equation*}
        \sum_{z' \in \Zsf} \1 \{ f(x', \sigma(x')) \leq k\} P(z, z') \geq \tau 
        \Rightarrow  
        \sum_{z' \in \Zsf} \1 \{ q(x', \sigma(x')) \leq k\} P(z, z') \geq \tau
    \end{equation*}
    for all $(x, a) \in \Gsf$.  Then, $(S_\sigma \; q)(x, a) \leq (S_\sigma \; f)(x, a)$
    since the minimum of the subset is greater.  Hence, $S_\sigma \; q \leq S_\sigma f$.
    
    Regarding $S_\sigma V \subseteq V$, fix $\sigma \in \Sigma$ and 
    $v \in V = [\underline{r}/(1-\beta), \bar r/(1-\beta)]$.  
    Note that for any constant $\lambda$, we have
    \begin{equation*}
        \min \left\{ k \in \RR: \sum_{z' \in \Zsf} \1 \{ \lambda \leq k \}
        P(z, z') \geq \tau
        \right \} = \lambda
    \end{equation*}
    for all $(x, a) \in \Gsf$ and $\tau \in (0, 1)$.  Hence,
    \begin{equation*}
        \underline{r}/(1-\beta) 
        = \underline{r} + \beta \underline{r}/(1-\beta)
        \leq S_\sigma (\underline{r}/(1-\beta)) 
        \leq S_\sigma v,
    \end{equation*}
    and
    \begin{equation*}
        S_\sigma v
        \leq S_\sigma (\bar r/(1-\beta))
        \leq \bar r + \beta \bar r/(1-\beta) = \bar r/(1-\beta).
    \end{equation*}

    Regarding regularity, fix $q \in V$.  There exists a map $\sigma \colon \Xsf
    \to \Asf$ such that $\sigma(x) \in \argmax_{a \in \Gamma(x)} q(x, a)$ 
    for all $x \in \Xsf$, since $\Asf$ is finite.  
    Then $q(x, \sigma(x)) = \max_{a \in \Gamma(x)} q(x, a)$ for all $x \in \Xsf$.
    For this $\sigma$ and any other $\tau \in \Sigma$, we have $(S_\tau \; q)(x, a) \leq
    (S_\sigma \; q)(x, a)$ for each $(x, a) \in \Gsf$, which implies that $S_\tau \; q
    \leq S_\sigma \; q$. In particular, $\sigma$ is $q$-greedy.
\end{proof}

\begin{lemma}\label{l:q_c}
    Each $S_\sigma \in \TT_Q$ satisfies $|S_\sigma^n v - S_\sigma^n w| \too 0$
    for any $v, w \in V$.
\end{lemma}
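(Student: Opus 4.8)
The plan is to show that each $S_\sigma$ is, in disguise, a sup-norm contraction with modulus $\beta$ on $\RR^{\Gsf}$, and then to convert the resulting geometric bound into order convergence using finiteness of $\Gsf$. The heart of the argument is a nonexpansiveness property of the quantile functional appearing in the definition of $S_\sigma$; the remaining steps are routine.

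First I would isolate the quantile functional. For $z \in \Zsf$ and $g \colon \Zsf \to \RR$, set
\begin{equation*}
    Q_\tau^z(g) \coloneq \min \left\{ k \in \RR \colon \sum_{z' \in \Zsf} \1\{ g(z') \leq k\}\, P(z, z') \geq \tau \right\},
\end{equation*}
which is well-defined because $\Zsf$ is finite: the set is nonempty (take $k = \max_{z'} g(z')$, so the sum equals $1 \geq \tau$) and $k \mapsto \sum_{z'} \1\{ g(z') \leq k\} P(z,z')$ is a nondecreasing right-continuous step function, so the infimum is attained. Two elementary facts suffice: (a) monotonicity, since $g_1 \leq g_2$ gives $\1\{g_1 \le k\} \geq \1\{g_2 \le k\}$ pointwise, hence $Q_\tau^z(g_1) \leq Q_\tau^z(g_2)$; and (b) translation equivariance, $Q_\tau^z(g + c) = Q_\tau^z(g) + c$ for any constant $c \in \RR$, which is immediate from the change of variable $k \mapsto k - c$ in the defining set. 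Combining (a) and (b): if $c \geq 0$ and $|g_1 - g_2| \leq c$ pointwise on $\Zsf$, then $g_1 \le g_2 + c$ and $g_2 \le g_1 + c$, so $|Q_\tau^z(g_1) - Q_\tau^z(g_2)| \leq c$.

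Next I would apply this with $g$ given by $z' \mapsto q(x', \sigma(x'))$, where $x' = (H(y,a,z'),z')$, for the two functions $q, f \in \RR^{\Gsf}$. Since $\Gsf$ is finite, there is a finite constant $c \coloneq \max_{(x,a) \in \Gsf} |q(x,a) - f(x,a)|$, so $|q - f| \le c \1$ on $\Gsf$, where $\1$ denotes the constant function equal to one. Because $(x', \sigma(x')) \in \Gsf$ for every feasible $\sigma$, the two maps $z' \mapsto q(x',\sigma(x'))$ and $z' \mapsto f(x',\sigma(x'))$ differ by at most $c$ pointwise on $\Zsf$, uniformly in $(x,a)$. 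Hence $|(S_\sigma q)(x,a) - (S_\sigma f)(x,a)| = \beta \, |Q_\tau^z(\,\cdot\,) - Q_\tau^z(\,\cdot\,)| \leq \beta c$ for every $(x,a) \in \Gsf$, that is, $|S_\sigma q - S_\sigma f| \leq \beta c \1$ in $\RR^{\Gsf}$. Inducting on $n$ — at step $n+1$ apply this one-step bound to $S_\sigma^{n} q$ and $S_\sigma^{n} f$ in place of $q$ and $f$, with $c$ replaced by $\beta^{n} c$ — yields $|S_\sigma^n q - S_\sigma^n f| \leq \beta^n c \1$ for all $n \in \NN$.

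Finally I would set $d_n \coloneq \beta^n c \1 \in \RR^{\Gsf}$. Since $\beta \in (0,1)$, $\beta^n c \downarrow 0$ in $\RR$, and since $\RR^{\Gsf}$ is Archimedean (being finite-dimensional), $d_n \downarrow 0$ in $\RR^{\Gsf}$. As $|S_\sigma^n v - S_\sigma^n w| \leq d_n$ for every $n$ and every $v, w \in V \subset \RR^{\Gsf}$, the definition of order convergence gives $|S_\sigma^n v - S_\sigma^n w| \too 0$, which is the claim. The only step requiring genuine care is the quantile nonexpansiveness (facts (a) and (b) and their combination); everything afterward — the one-step contraction bound, the induction, and the passage to $\too$-convergence — is bookkeeping.
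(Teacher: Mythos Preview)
Your proof is correct and follows essentially the same route as the paper: both establish that $S_\sigma$ is a $\beta$-contraction in the sup norm on $\RR^{\Gsf}$ via translation equivariance and monotonicity of the quantile, then iterate and pass to order convergence via a geometrically decaying dominating sequence. The only cosmetic difference is that you isolate the quantile functional $Q_\tau^z$ and prove its nonexpansiveness separately, whereas the paper derives the identity $S_\sigma(q+\lambda) = S_\sigma q + \beta\lambda$ directly and combines it with the already-established order preservation of $S_\sigma$.
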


\begin{proof}
    Fix $\sigma \in \Sigma$.  For any given constant $\lambda \geq 0$ and 
    $q \in \RR^{\Gsf}$,  we have
    \begin{equation*}
        \1 \{ q(x', \sigma(x')) + \lambda \leq k\} 
        = \1 \{ q(x', \sigma(x')) \leq k - \lambda \}
        \text{ for all } x' \in \Xsf \text{ and } k \in \RR.
    \end{equation*}
    Hence, for all $(x, a) \in \Gsf$,
    \begin{align*}
        & (S_\sigma (q+\lambda))(x, a) \\
        = & r(x, a) + \beta \min \left\{
        k \in \RR \colon
        \sum_{z' \in \Zsf} \1 \{ q(x', \sigma(x')) \leq k - \lambda \} P(z, z') 
        \geq \tau \right\} \\
        = & r(x, a) + \beta \min \left\{
        k - \lambda \in \RR \colon
        \sum_{z' \in \Zsf} \1 \{ q(x', \sigma(x')) \leq k - \lambda \} P(z, z')
        \geq \tau \right\} + \beta \lambda \\
        = & (S_\sigma \; q)(x, a) + \beta \lambda,
    \end{align*}
    which implies $S_\sigma \; (q + \lambda) = S_\sigma \; q +  \beta \lambda$.  
    Thus, for any $q, f \in \RR^{\Gsf}$, we have
    \begin{align*}
        S_\sigma \; q & = S_\sigma (q - f + f) \\
        & \leq S_\sigma (f + |q - f|) 
        \leq S_\sigma (f + \sup_{(x, a) \in \Gsf} |q - f|) 
        = S_\sigma \; f + \beta \sup_{(x, a) \in \Gsf} |q - f|.
    \end{align*}
    Reversing the role of $q$ and $f$ yields $|S_\sigma \; q - S_\sigma \; f| \leq \beta
    \sup_{(x, a) \in \Gsf} |q - f|$.  We claim that
    \begin{equation}\label{eq:induction}
        |S_\sigma^n \; q - S_\sigma^n \; f | \leq  \beta^n \sup_{(x, a) \in \Gsf} |q - f|
    \end{equation}
    holds for all $n \in \NN$.  It holds when $n = 1$. Suppose it holds for $n \in \NN$.
    For $n+1$, we have
    \begin{equation*}
        |S_\sigma^{n+1} \; q - S_\sigma^{n+1} \; f |
        \leq \beta \sup_{(x, a) \in \Gsf} |S_\sigma^{n} \; q - S_\sigma^{n} \; f |
        \leq \beta \, \beta^n \sup_{(x, a) \in \Gsf} |q - f|
        = \beta^{n+1} \sup_{(x, a) \in \Gsf} |q - f|.
    \end{equation*}
    The second inequality follows from taking supremum over $\Gsf$ in \eqref{eq:induction}.
    Hence, we have
    \begin{equation*}
        |S_\sigma^n \; q - S_\sigma^n \; f | 
        \leq  \beta^n \sup_{(x, a) \in \Gsf} |q - f| \downarrow 0.
        \qedhere
    \end{equation*}
\end{proof}

\begin{proposition}
    The fundamental optimality properties hold for $(V, \TT_{Q})$ and 
    VFI, OPI and HPI all converge.
\end{proposition}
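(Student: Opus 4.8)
The plan is to obtain the conclusion directly from Theorem~\ref{t:rieszcadp}, applied to the ADP $(V, \TT_Q)$. Two of the three hypotheses are already in hand: Lemma~\ref{l:q_radp} shows that $(V, \TT_Q)$ is a regular ADP, and Lemma~\ref{l:q_c} shows that each policy operator $S_\sigma$ satisfies $|S_\sigma^n q - S_\sigma^n f| \too 0$ for all $q, f \in V$, which is precisely the statement that $(V, \TT_Q)$ is asymptotically order contracting. So the only remaining work is to locate $V$ inside a suitable countably Dedekind complete Riesz space and to record the relevant completeness of $V$ itself.

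For this, I would note that $V = [\underline{r}/(1-\beta),\, \bar r/(1-\beta)]$ is an order interval in $\RR^{\Gsf}$, which, since $\Gsf$ is finite, is a Dedekind complete Riesz space under the pointwise order. Hence every order interval in $\RR^{\Gsf}$, and in particular $V$, is chain complete, and a fortiori countably chain complete. This puts us exactly in the setting of Theorem~\ref{t:rieszcadp}. Moreover, since each $S_\sigma$ is order preserving (by the definition of an ADP) and $V$ is chain complete, the Remark following Theorem~\ref{t:rieszcadp} lets us dispense with the order-continuity hypothesis. We then conclude that the fundamental optimality properties hold for $(V, \TT_Q)$ and that VFI, OPI and HPI all converge.

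The only delicate point is the appeal to that Remark in place of verifying order continuity of the $S_\sigma$ directly, and I expect it to be routine: the Remark's argument uses the Knaster-Tarski theorem to produce a fixed point of the Bellman operator on the chain complete set $V$, together with the order-preserving form of Lemma~\ref{l:rieszc} for well-posedness and downward stability, all of which are available here. If one instead prefers to stay strictly within Theorem~\ref{t:rieszcadp}, one can check order continuity of $S_\sigma$ by hand: if $q_n \uparrow q$ in $V$, then, because $\Zsf$ and $\Gsf$ are finite, the inner quantile functional $\min\{k : \sum_{z'} \1\{q_n(x',\sigma(x')) \le k\} P(z,z') \ge \tau\}$ is increasing in $n$ and converges up to its value at $q$, whence $S_\sigma q_n \uparrow S_\sigma q$. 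Either route then delivers all three claims through Theorem~\ref{t:rieszcadp}.
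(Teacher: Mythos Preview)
Your proposal is correct and follows essentially the same route as the paper: cite Lemma~\ref{l:q_radp} for regularity, Lemma~\ref{l:q_c} for asymptotic order contraction, observe that $V$ is an order interval in the Dedekind complete space $\RR^{\Gsf}$ and hence chain complete, and then invoke Theorem~\ref{t:rieszcadp} via Remark~\ref{rmk:cc} to bypass order continuity. Your optional direct verification of order continuity is a harmless extra; the paper does not include it.
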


\begin{proof}
    We showed in Lemma~\ref{l:q_radp} that $(V, \TT_{Q})$ is a regular ADP.
    The value space $V$ is chain complete since $\RR^{\Gsf}$ is Dedekind complete.
    In Lemma~\ref{l:q_c}, we proved that each policy operator $S_\sigma \in \TT_Q$ 
    satisfies $|S_\sigma^n v - S_\sigma^n w| \too 0$ for any $v, w \in V$.
    The claim now follows from Theorem~\ref{t:rieszcadp} (see Remark~\ref{rmk:cc}).
\end{proof}

\subsection{Nonlinear Discounting}\label{ss:nd}

\cite{jaskiewicz2014variable} and \cite{bauerle2021stochastic}
introduced a valuable variation on the standard constant discount factor Markov
decision process model where discounting is allowed to be a nonlinear function
of future lifetime values.  This nonlinearity allows researchers to handle
important features such as magnitude effects, which have been observed
in experimental data \citep{jaskiewicz2014variable}.
We consider a variation of their nonlinear discounting model using a
nonlinear discounting operator and establish
optimality and convergence results using Theorem~\ref{t:rieszbk}. 
In doing so, we suppose that the Bellman equation can be written as
\begin{equation}\label{eq:bend_infinite}
    v(x) = \max_{a \in \Gamma(x)} \left\{
        r(x, a) + \int (\beta v)(H(y, a, z'), z') P(z, \diff z')
    \right\},
    \quad (x=(y,z) \in \Xsf)
\end{equation}
where $\beta$ is an operator that computes future values.
(In the case of \cite{jaskiewicz2014variable} and \cite{bauerle2021stochastic},
the operator $\beta$ has the specific form $\beta v = b \circ v$ for some 
function $b \colon \RR \to \RR_+$.)

We impose some stability conditions.

\begin{assumption}\label{a:nd_infinite}
    The following conditions hold:
    \begin{enumerate}
        \item the map $\bar H(y, z') \coloneq \sup_{a \in \Asf} H(y, a, z')$ 
            mapping from $\Xsf$ to $ \Ysf$ is well-defined and measurable, and 
            for any given $z' \in \Zsf$, it is order-preserving on $\Ysf$,
        \item the discount operator $\beta$ is an order preserving self-map on $bm\Xsf$,
        \item the discount operator $\beta$ obeys 
            $\beta (v + w) \leq \beta v + \beta w$
            for all $v, w \in bm\Xsf$,
        \item there exists a bounded and measurable function 
            $\delta \colon \Zsf \to \RR_+$ such that
            \begin{equation*}
                (\beta v)(y, z) \leq \delta(z) v(y, z) 
                \text{ for all } v \in (bm\Xsf)_+ \text{ and } (y, z) \in \Xsf,
            \end{equation*}
        \item the sequence
            \begin{equation*}
                \sup_{z \in \Zsf} \EE^{P}_z \prod_{t=1}^{n} \delta(Z_t) 
                \downarrow 0.
            \end{equation*}
    \end{enumerate}
\end{assumption}

In (v), $\EE^{P}$ is the conditional expectation 
with respect to the stochastic kernel $P$.

To solve \eqref{eq:bend_infinite}, we construct an ADP in a Riesz space $bm\Xsf$.
Let $S$ be an operator defined at each $v \in (bm\Xsf)_+$ by
\begin{equation*}
    (Sv)(x) 
    = \sup_{(x, a) \in \Gsf} r(x, a) 
    + \int \delta(z') v(\bar H(y, z'), z') P(z, \diff z')
    \text{ for all } x \in \Xsf.
\end{equation*}
The operator $S$ is globally stable on $(bm\Xsf)_+$, by (v) of
Assumption~\ref{a:nd_infinite} and Theorem~2.1 of \cite{stachurski2021dynamic}.  
Let $b \in (bm\Xsf)_+$ be the unique fixed point of $S$.
By (i) of Assumption~\ref{a:nd_infinite} and Theorem~2.2 of \cite{stachurski2021dynamic},
$b$ is increasing in $\Ysf$. We set $V_b = [0, b] \subset bm\Xsf$ as the value space.
A feasible policy is a Borel measurable map $\sigma \colon \Xsf \to \Asf$
such that $\sigma(x) \in \Gamma(x)$ for all $x \in \Xsf$.
Let $\Sigma$ be the set of all feasible policies.
We set $T_{ND} = \{ T_\sigma \colon \sigma \in \Sigma\}$ with
$T_\sigma$ defined as 
\begin{equation*}
    (T_\sigma \; v)(x) 
    = r(x, \sigma(x)) 
    + \int (\beta v)(H(y, \sigma(x), z'), z') P(z, \diff z')
    \text{ for all } x \in \Xsf.
\end{equation*}

\begin{lemma}\label{l:nd_radp}
    If Assumption~\ref{a:regular} and Assumption~\ref{a:nd_infinite} hold, 
    then $(V_b, \TT_{ND})$ is a regular ADP.
\end{lemma}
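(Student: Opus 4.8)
The plan is to verify the two defining properties of a regular ADP in turn: first that $(V_b,\TT_{ND})$ is an ADP (each $T_\sigma$ is an order-preserving self-map of the order interval $V_b=[0,b]$, and the family is nonempty since $\Gamma$ is nonempty and compact-valued, hence admits a measurable selector), and then that it is regular (a $v$-greedy policy exists for every $v\in V_b$). Throughout, the argument will closely parallel Lemmas~\ref{l:adp} and~\ref{l:regular} for the risk-sensitive model.

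For the ADP part I would fix $\sigma\in\Sigma$ and argue as follows. Order preservation of $T_\sigma$ is immediate: $v\leq w$ in $V_b$ gives $\beta v\leq\beta w$ by Assumption~\ref{a:nd_infinite}(ii), and integrating the resulting pointwise inequality against $P(z,\cdot)$ preserves order. Measurability and boundedness of $T_\sigma v$ for $v\in V_b$ follow because $\beta v\in bm\Xsf$, $H$ is measurable, and integration against a stochastic kernel preserves measurability. For the lower bound, putting $v=w=0$ in Assumption~\ref{a:nd_infinite}(iii) gives $0\leq\beta 0$, so $v\geq 0$ forces $\beta v\geq\beta 0\geq 0$, and with $r\geq 0$ (Assumption~\ref{a:regular}(ii)) this yields $T_\sigma v\geq 0$. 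The substantive step is the upper bound $T_\sigma v\leq b$: using $v\leq b$ and monotonicity of $\beta$, then the drift bound of Assumption~\ref{a:nd_infinite}(iv) at $b\in(bm\Xsf)_+$, then the fact that $b$ is increasing in $\Ysf$ (Assumption~\ref{a:nd_infinite}(i) together with Theorem~2.2 of \cite{stachurski2021dynamic}) so that $b(H(y,\sigma(x),z'),z')\leq b(\bar H(y,z'),z')$, and finally the identity $b=Sb$, one obtains for $x=(y,z)$
\begin{align*}
    (T_\sigma v)(x)
    & = r(x,\sigma(x)) + \int (\beta v)(H(y,\sigma(x),z'),z')\, P(z,\diff z') \\
    & \leq \sup_{(x,a)\in\Gsf} r(x,a) + \int \delta(z')\, b(\bar H(y,z'),z')\, P(z,\diff z')
      = (Sb)(x) = b(x),
\end{align*}
so $T_\sigma v\in[0,b]=V_b$ and $(V_b,\TT_{ND})$ is an ADP.

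For regularity I would fix $v\in V_b$ and note that $\beta v\in bm\Xsf$, so Assumption~\ref{a:regular}(iii) (applied with $\beta v$ in place of $v$) makes $a\mapsto\int(\beta v)(H(y,a,z'),z')\,P(z,\diff z')$ continuous on $\Asf$; together with continuity of $a\mapsto r(x,a)$ from Assumption~\ref{a:regular}(ii) and measurability in $x$, the map $(x,a)\mapsto r(x,a)+\int(\beta v)(H(y,a,z'),z')\,P(z,\diff z')$ is Carath\'{e}odory. Since $\Gamma$ is compact-valued and measurable by Assumption~\ref{a:regular}(i), the measurable maximum theorem (Theorem~18.19 of \cite{aliprantis2006infinite}) yields a measurable $\sigma\colon\Xsf\to\Asf$ with $\sigma(x)\in\argmax_{a\in\Gamma(x)}\{r(x,a)+\int(\beta v)(H(y,a,z'),z')\,P(z,\diff z')\}$ for all $x$; for this $\sigma$ we have $T_\tau v\leq T_\sigma v$ pointwise for every $\tau\in\Sigma$, so $\sigma$ is $v$-greedy.

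The hard part will be the upper bound $T_\sigma v\leq b$, which hinges on lining up monotonicity of $\beta$, the drift bound~(iv), monotonicity of $b$ in the endogenous coordinate, and the fixed-point identity $b=Sb$ in exactly the right order — the same device used in the proof of Lemma~\ref{l:adp}. Everything else reduces to routine checks from Assumptions~\ref{a:regular} and~\ref{a:nd_infinite}.
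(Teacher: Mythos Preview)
Your proposal is correct and follows essentially the same route as the paper: order preservation of $T_\sigma$ via monotonicity of $\beta$, the upper bound $T_\sigma v\leq b$ via the chain $\beta v\leq\beta b\leq\delta b$, monotonicity of $b$ in $\Ysf$, and the fixed-point identity $Sb=b$, and regularity via the measurable maximum theorem applied to the Carath\'eodory integrand with $\beta v\in bm\Xsf$. If anything, your treatment of the lower bound (deriving $\beta 0\geq 0$ from subadditivity) is slightly more explicit than the paper's, which simply asserts $0\leq T_\sigma v$.
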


\begin{proof}
    Fix $\sigma \in \Sigma$. Clearly, $T_\sigma$ is order preserving.  
    For any $v \in V_b$, $\beta v \in bm\Xsf$.  
    The arithmetic and integral operation in the definition of $T_\sigma$
    all preserve boundedness and measurability. 
    By (iv) of Assumption~\ref{a:nd_infinite}, 
    \begin{equation*}
        (T_\sigma b)(x)
        \leq \sup_{(x, a) \in \Gsf} r(x, a) 
        + \int \delta(z') b(H(y, \sigma(x), z'), z') P(z, \diff z').
    \end{equation*}
    This yields $(T_\sigma b)(x) \leq (S b)(x)$, since $b$ is increasing in $\Ysf$.
    Hence, $0 \leq T_\sigma v \leq T_\sigma b \leq S b = b$, thus, $T_\sigma v \in V_b$.

    Regarding regularity, fix $v \in V_b$, we have $\beta v \in V_b$. 
    By (ii) and (iii) of Assumption~\ref{a:regular}, the map
    \begin{equation*}
        (x, a) \mapsto r(x, a) + \int (\beta \; v)(H(y, a, z'), z') P(z, \diff z')
    \end{equation*}
    is measurable on $\Xsf$ and continuous on $\Gsf$, 
    so, by (i) of Assumption~\ref{a:regular} and the measurable maximum theorem
    (see Theorem~18.19 of \cite{aliprantis2006infinite}),
    there exists a measurable function $\sigma \colon \Xsf \to \Asf$ such that
    \begin{equation*}
        \sigma(x) \in \argmax_{a \in \Gamma(x)} \left\{
            r(x, a) + \int (\beta \; v)(H(y, a, z'), z') P(z, \diff z')
        \right\}
        \text{ for all } x \in \Xsf.
    \end{equation*}
    For this $\sigma$ and any other $\tau \in \Sigma$, we have 
    $(T_\tau v)(x) \leq (T_\sigma v)(x)$ for all $x \in \Xsf$.  
    In particular, $\sigma$ is $v$-greedy.
\end{proof}

\begin{lemma}\label{l:nd_aoc}
    If Assumption~\ref{a:nd_infinite} holds, then $(V_b, \TT_{ND})$
    is absolutely order contracting.
\end{lemma}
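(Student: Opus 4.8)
The plan is to fix an arbitrary $\sigma \in \Sigma$ and exhibit an order continuous positive linear operator $K_\sigma$ on $E = bm\Xsf$ that dominates $T_\sigma$ in the sense of \eqref{eq:oc}. The natural candidate is
\[
    (K_\sigma f)(x) \coloneq \int \delta(z')\, f\big(H(y, \sigma(x), z'),\, z'\big)\, P(z, \diff z')
    \qquad (f \in bm\Xsf,\ x = (y,z) \in \Xsf).
\]
First I would check that $K_\sigma$ is a well-defined positive linear self-map on $bm\Xsf$: boundedness holds because $\delta$ is bounded (Assumption~\ref{a:nd_infinite}(iv)--(v)), and measurability follows from measurability of $\delta$, $H$ and $\sigma$ together with the kernel property of $P$. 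Order continuity is then proved exactly as in Lemma~\ref{l:k_oc}: if $f_n \uparrow f$ in $bm\Xsf$, Lemma~\ref{l:bmx} gives $f_n(x') \uparrow f(x')$ pointwise, $\delta \geq 0$ gives $\delta(z') f_n(\cdot) \uparrow \delta(z') f(\cdot)$, the dominated convergence theorem gives $(K_\sigma f_n)(x) \uparrow (K_\sigma f)(x)$, and a second use of Lemma~\ref{l:bmx} yields $K_\sigma f_n \uparrow K_\sigma f$.

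Next I would establish the contraction inequality $|T_\sigma v - T_\sigma w| \leq K_\sigma |v - w|$ for $v, w \in V_b$. The key step is the pointwise bound $|\beta v - \beta w| \leq \beta |v - w|$, valid for all $v, w \in bm\Xsf$: from $v \leq w + |v-w|$, order preservation (Assumption~\ref{a:nd_infinite}(ii)) and subadditivity (Assumption~\ref{a:nd_infinite}(iii)) give $\beta v \leq \beta w + \beta|v-w|$, hence $\beta v - \beta w \leq \beta|v-w|$, and the symmetric argument gives $\beta w - \beta v \leq \beta|v-w|$. Applying Assumption~\ref{a:nd_infinite}(iv) to the nonnegative function $|v-w|$ then gives $|\beta v - \beta w|(y',z') \leq \delta(z')\,|v-w|(y',z')$. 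Since
\[
    (T_\sigma v)(x) - (T_\sigma w)(x) = \int \big[(\beta v) - (\beta w)\big]\big(H(y,\sigma(x),z'),z'\big)\, P(z,\diff z'),
\]
moving the absolute value inside the integral and invoking the previous bound yields $|T_\sigma v - T_\sigma w| \leq K_\sigma|v-w|$.

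It then remains to verify $K_\sigma^n |v - w| \too 0$ for all $v, w \in V_b$. Writing $M \coloneq \|b\|_\infty < \infty$ and noting $0 \leq v, w \leq b \leq M\1$, we get $|v - w| \leq M\1$, so positivity of $K_\sigma$ (hence of $K_\sigma^n$) gives $0 \leq K_\sigma^n |v-w| \leq M\, K_\sigma^n \1$. Now observe that applying $K_\sigma$ to a function of $z$ alone again produces a function of $z$ alone, so the endogenous $y$-path contributes nothing: an easy induction identifies $(K_\sigma^n \1)(y,z)$ with $g_n(z) \coloneq \EE^P_z \prod_{t=1}^n \delta(Z_t)$, the inductive step using the Markov property of $(Z_t)$ to rewrite $\int \delta(z') g_n(z') P(z,\diff z') = g_{n+1}(z)$. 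Hence $K_\sigma^n\1 \leq c_n \1$ with $c_n \coloneq \sup_{z \in \Zsf} g_n(z)$, and Assumption~\ref{a:nd_infinite}(v) gives $c_n \downarrow 0$. Thus the constant functions $d_n \coloneq M c_n \1$ satisfy $d_n \downarrow 0$ in $bm\Xsf$ and dominate the nonnegative element $K_\sigma^n|v-w|$, so $K_\sigma^n|v-w| \too 0$. Combining the three parts, each $T_\sigma$ is absolutely order contracting on $V_b$, hence $(V_b, \TT_{ND})$ is absolutely order contracting.

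I expect the most delicate step to be the induction for $K_\sigma^n \1$: one must track the index range on $\prod_{t=1}^n \delta(Z_t)$ as the chain is restarted, and recognize that although the endogenous coordinate path depends on $\sigma$ and on the realized shocks, it is harmless here because $\1$ and each $g_n$ depend only on $z$. The remaining ingredients — order continuity of $K_\sigma$ and the absolute-value/subadditivity manipulations for $\beta$ — are routine variants of arguments already used in Lemma~\ref{l:k_oc} and elsewhere in the paper.
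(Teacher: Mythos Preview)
Your proposal is correct and follows essentially the same route as the paper: define $K_\sigma f(x)=\int \delta(z')\,f(H(y,\sigma(x),z'),z')\,P(z,\diff z')$, prove order continuity via Lemma~\ref{l:bmx} and dominated convergence, obtain $|\beta v-\beta w|\leq\beta|v-w|$ from order preservation and subadditivity, and combine with Assumption~\ref{a:nd_infinite}(iv) to get $|T_\sigma v-T_\sigma w|\leq K_\sigma|v-w|$. The only cosmetic difference is that you spell out the induction identifying $(K_\sigma^n\1)(y,z)$ with $\EE^P_z\prod_{t=1}^n\delta(Z_t)$ after bounding $|v-w|$ by a constant, whereas the paper bounds $(K^n v)(x)$ for $v\in V_b$ directly by $\sup_z\EE^P_z\prod_{t=1}^n\delta(Z_t)\cdot\sup_x v(x)$ and attributes this inequality to \cite{stachurski2021dynamic}.
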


\begin{proof}
    By (ii) and (iii) of Assumption~\ref{a:nd_infinite}, for any $v, w \in V_b$, we have
    \begin{equation*}
        \beta v = \beta (v - w + w) 
        \leq \beta (|v - w| + w) 
        \leq \beta w + \beta(|v - w|).
    \end{equation*}
    Reversing the role of $v$ and $w$ yields 
    $|\beta v - \beta w| \leq \beta(|v - w|)$.  Hence,
    by (iv) of Assumption~\ref{a:nd_infinite}, 
    for each $\sigma \in \Sigma$ and $x \in \Xsf$, letting $x' = (H(y, \sigma(x), z')$,
    we have
    \begin{align*}
        |(T_\sigma v)(x) - (T_\sigma w)(x)| 
        & = | \int (\beta v)(x') - (\beta w)(x') P(z, \diff z') | \\
        & \leq \int |(\beta v)(x') - (\beta w)(x')| P(z, \diff z') \\
        & \leq \int \beta (|v(x') - w(x')|) P(z, \diff z') \\
        &\leq \int \delta(z') |v(x') - w(x')| P(z, \diff z').
    \end{align*}
    Let a positive linear operator $K$ defined at each $v \in V_b$ by
    \begin{equation*}
        (Kv)(x) = \int \delta(z') v(x') P(z, \diff z')
        \text{ for all } x \in \Xsf,
    \end{equation*}
    Clearly, $K$ is order continuous.  Fix $v_n \uparrow v$ in $V$.  
    By Lemma~\ref{l:bmx}, we have $\delta(z') v_n(x') \uparrow \delta(z') v(x')$.
    Fixing $x \in \Xsf$ and applying the
    dominated convergence theorem yields $(K v_n)(x) \uparrow (K v)(x)$.  
    Applying Lemma~\ref{l:bmx} again yields $K v_n \uparrow K v$.

    Moreover, by (v) of Assumption~\ref{a:nd_infinite}, 
    for any $v \in V_b$ and $x \in \Xsf$, 
    \begin{equation*}
        (K^n v)(x) 
        \leq \sup_{z \in \Zsf} \EE^{P}_z \left[\prod_{t = 1}^{n} \delta(Z_t) \right] 
        \sup_{x \in \Xsf} v(x) \downarrow 0.
    \end{equation*}
    The inequality follows from Section A.4.1 of \cite{stachurski2021dynamic}. 
    Hence, $T_\sigma$ is absolutely order contracting.
\end{proof}

\begin{proposition}
    If Assumption~\ref{a:regular} and Assumption~\ref{a:nd_infinite} hold, then
    the fundamental optimality properties hold for $(V_b, \TT_{ND})$ and
    VFI, OPI and HPI all converge.
\end{proposition}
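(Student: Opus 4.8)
The plan is to assemble the pieces already established and feed them into Theorem~\ref{t:rieszbk}. First I would invoke Lemma~\ref{l:nd_radp}, which under Assumptions~\ref{a:regular} and \ref{a:nd_infinite} tells us that $(V_b, \TT_{ND})$ is a regular ADP; in particular each $T_\sigma$ is an order-preserving self-map on $V_b$ and a $v$-greedy policy exists for every $v \in V_b$. Next I would note that $V_b = [0,b]$ is an order interval in $bm\Xsf$, and since $bm\Xsf$ is a countably Dedekind complete Riesz space (as recorded in Section~\ref{ss:rs}), every one of its order intervals is countably chain complete; hence $V_b$ is countably chain complete.

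Then I would apply Lemma~\ref{l:nd_aoc}, which under Assumption~\ref{a:nd_infinite} shows that $(V_b, \TT_{ND})$ is absolutely order contracting, exhibiting the common dominating positive linear operator $K$ with $(Kv)(x) = \int \delta(z') v(x') P(z,\diff z')$, verified there to be order continuous with $K^n v \too 0$ on $V_b$. With regularity, countable chain completeness, and absolute order contractivity all in hand, the hypotheses of Theorem~\ref{t:rieszbk} are met, so both conclusions---the fundamental optimality properties and convergence of VFI, OPI, and HPI---follow directly.

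There is essentially no obstacle here: the proposition is a corollary of Theorem~\ref{t:rieszbk} once the two preparatory lemmas are in place, and the only thing to be careful about is recording why $V_b$ is countably chain complete (it is an order interval in a countably Dedekind complete Riesz space). If anything merits a sentence of care, it is confirming that the regularity clause of Theorem~\ref{t:rieszbk}'s hypotheses---"$(V,\TT)$ is a regular ADP"---is exactly what Lemma~\ref{l:nd_radp} delivers, so nothing further about greedy-policy selection is needed.
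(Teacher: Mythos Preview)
Your proposal is correct and mirrors the paper's own proof almost exactly: invoke Lemma~\ref{l:nd_radp} for regularity, note $V_b=[0,b]$ is countably chain complete because $bm\Xsf$ is countably Dedekind complete, invoke Lemma~\ref{l:nd_aoc} for absolute order contractivity, and conclude via Theorem~\ref{t:rieszbk}. Nothing is missing or extraneous.
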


\begin{proof}
    We showed in Lemma~\ref{l:nd_radp} that $(V_b, \TT_{ND})$ is a regular ADP. 
    The value space $V_b$ is countably chain complete 
    since $bm\Xsf$ is countably Dedekind complete.
    In Lemma~\ref{l:nd_aoc}, we proved that $(V_b, \TT_{ND})$ 
    is absolutely order contracting.
    The claims now follow from Theorem~\ref{t:rieszbk}.
\end{proof}

\section{Conclusion}

This paper presents a unified framework for dynamic programming within ordered vector
spaces. On the theoretical side, we first established sufficient conditions for the existence
and uniqueness of fixed points under concavity and order-contractivity, and then 
demonstrated how these fixed-point results can be used to obtain optimality
properties and algorithmic convergence results. On the practical side, we showed how the theory
applies to a broad class of decision-making problems, such as firm management.  Our examples discussed
risk-sensitive preferences with state-dependent discounting, quantile dynamic programs
linking with Q-learning formulations, and nonlinear discounting models.  Together, these
results underscore the fact that order-theoretic methods provide a natural mathematical
environment for modern dynamic programming.

\theendnotes

\bibliographystyle{ecta}
\bibliography{localbib}

@article{tauchen1986finite,
  title={Finite state markov-chain approximations to univariate and vector autoregressions},
  author={Tauchen, George},
  journal={Economics letters},
  volume={20},
  number={2},
  pages={177--181},
  year={1986},
  publisher={Elsevier}
}

@article{koudstaal2016risk,
  title={Risk, uncertainty, and entrepreneurship: Evidence from a lab-in-the-field experiment},
  author={Koudstaal, Martin and Sloof, Randolph and Van Praag, Mirjam},
  journal={Management Science},
  volume={62},
  number={10},
  pages={2897--2915},
  year={2016},
  publisher={INFORMS}
}

@article{brenner2015risk,
  title={The risk preferences of US executives},
  author={Brenner, Steffen},
  journal={Management Science},
  volume={61},
  number={6},
  pages={1344--1361},
  year={2015},
  publisher={INFORMS}
}

@article{graham2013managerial,
  title={Managerial attitudes and corporate actions},
  author={Graham, John R and Harvey, Campbell R and Puri, Manju},
  journal={Journal of financial economics},
  volume={109},
  number={1},
  pages={103--121},
  year={2013},
  publisher={Elsevier}
}

@article{schwartz1954,
 author = {J. Schwartz},
 journal = {The American Mathematical Monthly},
 number = {2},
 pages = {81--85},
 publisher = {[Taylor & Francis, Ltd., Mathematical Association of America]},
 title = {The Formula for Change in Variables in a Multiple Integral},
 volume = {61},
 year = {1954}
}

@article{maccheroni2006dynamic,
  title={Dynamic variational preferences},
  author={Maccheroni, Fabio and Marinacci, Massimo and Rustichini, Aldo},
  journal={Journal of Economic Theory},
  volume={128},
  number={1},
  pages={4--44},
  year={2006},
  publisher={Elsevier}
}

@article{toda2024unbounded,
  title={Unbounded Markov dynamic programming with weighted supremum norm Perov contractions},
  author={Toda, Alexis Akira},
  journal={Economic Theory Bulletin},
  volume={12},
  number={2},
  pages={141--156},
  year={2024},
  publisher={Springer}
}

@article{murphy2024reinforcement,
  title={Reinforcement learning: An overview},
  author={Murphy, Kevin},
  journal={arXiv preprint arXiv:2412.05265},
  year={2024}
}

@article{bauerle_markov_2022,
 title = {Markov decision processes with recursive risk measures},
 volume = {296},
 issn = {03772217},
 doi = {10.1016/j.ejor.2021.04.030},
 language = {en},
 number = {3},
 urldate = {2025-02-16},
 journal = {European Journal of Operational Research},
 author = {Bäuerle, Nicole and Glauner, Alexander},
 month = feb,
 year = {2022},
 pages = {953--966},
}

@article{bauerle_markov_2024,
 title = {Markov decision processes with risk-sensitive criteria: an overview},
 volume = {99},
 issn = {1432-2994, 1432-5217},
 shorttitle = {Markov decision processes with risk-sensitive criteria},
 doi = {10.1007/s00186-024-00857-0},
 language = {en},
 number = {1-2},
 urldate = {2025-02-16},
 journal = {Mathematical Methods of Operations Research},
 author = {Bäuerle, Nicole and Jaśkiewicz, Anna},
 month = apr,
 year = {2024},
 pages = {141--178},
}

@article{bauerle_stochastic_2018,
 title = {Stochastic optimal growth model with risk sensitive preferences},
 volume = {173},
 issn = {00220531},
 doi = {10.1016/j.jet.2017.11.005},
 language = {en},
 urldate = {2025-02-16},
 journal = {Journal of Economic Theory},
 author = {Bäuerle, Nicole and Jaśkiewicz, Anna},
 month = jan,
 year = {2018},
 pages = {181--200},
}

@article{kantorovitch1939method,
  title={The method of successive approximation for functional equations},
  author={Kantorovich, L},
  journal={Acta Mathematica},
  volume={71},
  number={1},
  pages={63--97},
  year={1939},
  publisher={Springer}
}

@article{TE20250353,
	author = {de Castro, Luciano I. and Galvao, Antonio F. and Nunes, Daniel da Siva},
	title = {Dynamic economics with quantile preferences},
	journal = {Theoretical Economics},
	volume = {20},
	number = {1},
	year = {2025},
	pages = {353-425},
	url = {https://econtheory.org/ojs/index.php/te/article/view/20250353/0}
}

@article{sargent2025partially,
      title={Dynamic Programming on Partially Ordered Sets}, 
      journal={SIAM Journal on Control and Optimization},
      author={Thomas J. Sargent and John Stachurski},
      year={2025},
      volume={in press}
}

@article{bauerle2021stochastic,
  title={Stochastic dynamic programming with non-linear discounting},
  author={B{\"a}uerle, Nicole and Ja{\'s}kiewicz, Anna and Nowak, Andrzej S},
  journal={Applied Mathematics \& Optimization},
  volume={84},
  number={3},
  pages={2819--2848},
  year={2021},
  publisher={Springer}
}

@article{jaskiewicz2014variable,
  title={On variable discounting in dynamic programming: applications to resource extraction and other economic models},
  author={Ja{\'s}kiewicz, Anna and Matkowski, Janusz and Nowak, Andrzej S},
  journal={Annals of Operations Research},
  volume={220},
  pages={263--278},
  year={2014},
  publisher={Springer}
}

@book{aliprantis2006infinite,
  title={Infinite dimensional analysis: A Hitchhiker's Guide},
  author={Aliprantis, Charalambos D and Border, Kim C},
  year={2006},
  publisher={Springer}
}

@article{veldkamp2023valuing,
  title={Valuing data as an asset},
  author={Veldkamp, Laura},
  journal={Review of Finance},
  volume={27},
  number={5},
  pages={1545--1562},
  year={2023},
  publisher={Oxford University Press}
}

@book{davey2002introduction,
  title={Introduction to lattices and order},
  author={Davey, Brian A and Priestley, Hilary A},
  year={2002},
  publisher={Cambridge University Press}
}

@article{giovannetti2013asset,
  title={Asset pricing under quantile utility maximization},
  author={Giovannetti, Bruno C},
  journal={Review of Financial Economics},
  volume={22},
  number={4},
  pages={169--179},
  year={2013},
  publisher={Elsevier}
}

@article{de2022static,
  title={Static and dynamic quantile preferences},
  author={de Castro, Luciano and Galvao, Antonio F},
  journal={Economic Theory},
  volume={73},
  number={2-3},
  pages={747--779},
  year={2022},
  publisher={Springer}
}

@article{de2019dynamic,
  title={Dynamic quantile models of rational behavior},
  author={de Castro, Luciano and Galvao, Antonio F},
  journal={Econometrica},
  volume={87},
  number={6},
  pages={1893--1939},
  year={2019},
  publisher={Wiley Online Library}
}

@article{bloise2023dont,
  title={Do not blame {B}ellman: It is {K}oopmans' fault},
  author={Bloise, Gaetano and Le Van, Cuong and Vailakis, Yiannis},
  journal={Econometrica},
  volume={in press},
  year={2023},
  publisher={Elsevier}
}

@book{bertsekas2021rollout,
  title     = {Rollout, policy iteration, and distributed reinforcement learning},
  author    = {Bertsekas, Dimitri},
  year      = {2021},
  publisher = {Athena Scientific}
}

@article{stachurski2021dynamic,
  title     = {Dynamic programming with state-dependent discounting},
  author    = {Stachurski, John and Zhang, Junnan},
  journal   = {Journal of Economic Theory},
  volume    = {192},
  pages     = {105190},
  year      = {2021},
  publisher = {Elsevier}
}

@book{bauerle2011markov,
  title     = {Markov decision processes with applications to finance},
  author    = {B{\"a}uerle, Nicole and Rieder, Ulrich},
  year      = {2011},
  publisher = {Springer Science \& Business Media}
}

@article{hills2019effective,
  title     = {Effective lower bound risk},
  author    = {Hills, Timothy S and Nakata, Taisuke and Schmidt, Sebastian},
  journal   = {European Economic Review},
  volume    = {120},
  pages     = {103321},
  year      = {2019},
  publisher = {Elsevier}
}

@article{marinacci2019unique,
  title     = {Unique {T}arski fixed points},
  author    = {Marinacci, Massimo and Montrucchio, Luigi},
  journal   = {Mathematics of Operations Research},
  volume    = {44},
  number    = {4},
  pages     = {1174--1191},
  year      = {2019},
  publisher = {INFORMS}
}

@article{marinacci2010unique,
  author    = {Marinacci, Massimo and Montrucchio, Luigi},
  journal   = {Journal of Economic Theory},
  number    = {5},
  pages     = {1776-1804},
  publisher = {Science Direct},
  title     = {Unique solutions for stochastic recursive utilities},
  volume    = {145},
  year      = {2010}
}

@book{bellman1957dynamic,
  title     = {Dynamic programming},
  author    = {Bellman, Richard},
  journal   = {Science},
  year      = {1957},
  publisher = {American Association for the Advancement of Science}
}

@book{kochenderfer2022algorithms,
  title     = {Algorithms for decision making},
  author    = {Kochenderfer, Mykel J and Wheeler, Tim A and Wray, Kyle H},
  year      = {2022},
  publisher = {The MIT Press}
}

@book{bertsekas2022abstract,
  title     = {Abstract dynamic programming},
  author    = {Bertsekas, Dimitri P},
  year      = {2022},
  edition   = {3},
  publisher = {Athena Scientific}
}

@book{bertsekas2012dynamic,
  title     = {Dynamic programming and optimal control},
  author    = {Bertsekas, Dimitri},
  volume    = {1},
  year      = {2012},
  publisher = {Athena Scientific}
}

@book{puterman2005markov,
  title     = {Markov decision processes: discrete stochastic dynamic programming},
  author    = {Puterman, Martin L},
  year      = {2005},
  publisher = {Wiley Interscience}
}

@article{kamihigashi2012order,
  title     = {An order-theoretic mixing condition for monotone {M}arkov chains},
  author    = {Kamihigashi, Takashi and Stachurski, John},
  journal   = {Statistics \& Probability Letters},
  volume    = {82},
  number    = {2},
  pages     = {262--267},
  year      = {2012},
  publisher = {Elsevier}
}

@article{kamihigashi2014stochastic,
  title     = {Stochastic stability in monotone economies},
  author    = {Kamihigashi, Takashi and Stachurski, John},
  journal   = {Theoretical Economics},
  volume    = {9},
  number    = {2},
  pages     = {383--407},
  year      = {2014},
  publisher = {Wiley Online Library}
}

@book{zaanen2012introduction,
  title     = {Introduction to operator theory in Riesz spaces},
  author    = {Zaanen, Adriaan C},
  year      = {2012},
  publisher = {Springer}
}

\end{document}